        \newtheorem{theorem}{Theorem}[section]
\newtheorem{definition}[theorem]{Definition} 
        \newtheorem{lemma}[theorem]{Lemma}
        \newtheorem{proposition}[theorem]{Proposition}
        \newtheorem{corollary}[theorem]{Corollary}
        \newtheorem{remark}[theorem]{Remark}
\numberwithin{equation}{section} 
\newcommand \supp {\text{supp}} 
\newcommand \be 	{\begin{equation}}
\newcommand \ee 	{\end{equation}}
\newcommand{\beqn}{\begin{eqnarray}}
\newcommand{\eeqn}{\end{eqnarray}}
\newcommand{\bseqn}{\begin{subeqnarray}}
\newcommand{\eseqn}{\end{subeqnarray}} 
\newcommand{\etheo}{\end{theoreme}}
\newcommand{\bprop}{\begin{proposition}}
\newcommand{\eprop}{\end{proposition}}
\newcommand{\blem}{\begin{lemma}}
\newcommand{\elem}{\end{lemma}}
\newcommand{\brema}{\begin{remark}} 
\newcommand{\bdefi}{\begin{definition}}
\newcommand{\edefi}{\end{definition}}  
\newcommand{\C}{{\cal C}}
\newcommand{\calD}{{\mathcal{D}}}
\newcommand{\calB}{{\mathcal{B}}}
\newcommand{\calH}{{\mathcal{H}}}
\newcommand{\calW}{{\mathcal{W}}}
\newcommand{\calO}{{\mathcal{O}}}
\newcommand{\wk}{{w^n_{K}}}
\newcommand{\wke}{{w^n_{K_e}}}
\newcommand{\wkve}{{w^n_{K,e}}}
\newcommand{\wkeve}{{w^n_{K_e,e}}}
\newcommand{\wwk}{{w^{n+1}_{K}}}
\newcommand{\wwkve}{{w^{n+1,-}_{K,e}}}
\newcommand{\wwkeve}{{w^{n+1,-}_{K_e,e}}}
\newcommand{\ve}{{v_e}}
\newcommand{\vk}{{v_K}} 
\newcommand{\uk}{{u^n_K}}
\newcommand{\uke}{{u^n_{K_e}}}
\newcommand{\uuk}{{u^{n+1}_K}} 
\newcommand{\Th}{{\cal T}_h}
\newcommand{\Thstar}{{\Th}^\star}
\newcommand{\volk}{{\vert K\vert}}
\newcommand{\volke}{{\vert K_e\vert}}
\newcommand{\vole}{{\vert e\vert}}
\newcommand{\pk}{{p_K}}
\newcommand{\sumnn}{
\displaystyle{\sum_{n \ge 0}~}}
\newcommand{\sumek}{
\displaystyle{\sum_{e\in \partial K}}}
\newcommand{\sumkT}{\displaystyle{\sum_{K\in{\cal T}_h~}}}
\newcommand{\nuek}{{\nu_{K,e}}}
\newcommand{\nueke}{{\nu_{K_e,e}}}
\newcommand{\ks}{{K^\star}}
\newcommand{\es}{{e^\star}}
\newcommand{\nuesk}{{\nu_{\ks(e),\es}}}
\newcommand{\sumesdk}{{\sum_{\es\in \ks(e)\cap K}}}
\newcommand{\voles}{{\vert\es\vert}} 
\newcommand{\gek}{{g_{e,K}}}
\newcommand{\Gek}{{G_{e,K}}}
\renewcommand{\u}{{u}}
\renewcommand{\v}{{v}}
\newcommand{\uh}{{u_h}}
\newcommand{\vh}{{v_h}}
\newcommand{\wh}{{w_h}}
\newcommand{\w}{{w}}
\newcommand{\cz}{{\cal C}_0}
\newcommand{\ci}{{\cal C}_i}
\newcommand{\ffi}{{f}_i}
\newcommand{\lil}{{\ell}_i^l}
\renewcommand{\ll}{{\ell}}
\newcommand{\flux}{{f}}
\newcommand{\Q}{{\cal Q}}
\newcommand{\U}{{\cal U}}
\newcommand{\F}{{\cal F}}
\renewcommand{\L}{{\cal L}}
\newcommand{\Lil}{{\cal L}_i^l}
\newcommand{\dt}{{\partial_t}}
\newcommand{\dxi}{{\partial_{x_i}}} 
\newcommand{\ake}{{\alpha_{K,e}}}
\newcommand{\akee}{{\alpha_{K_e,e}}}
\newcommand{\cpl}{\theta}
\newcommand{\cplL}{\theta_-}
\newcommand{\cplR}{\theta_+}
\newcommand{\lpc}{\gamma}
\newcommand{\lpcL}{\gamma_-}
\newcommand{\lpcR}{\gamma_+}
\newcommand{\lpcLR}{\gamma_\pm}
\def \dessin[#1,#2,#3,#4] {
{
\centerline{
  \epsfysize=#2
  \leavevmode\epsfbox{#1} }}
\centerline{#3  #4}
\smallskip
}
\def\longrightharpoonup{\relbar\joinrel\rightharpoonup}
\def\R{{\mathord{I\!\! R}}}
\begin{document}

\title{Coupling techniques for nonlinear hyperbolic equations. IV. 
Multi--component coupling and multidimensional well--balanced schemes}
\author{
Benjamin Boutin$^1$, Fr\'ed\'eric Coquel$^2$, and Philippe G. LeFloch$^3$}

\date{\today}
\maketitle

\footnotetext[1]{Institut de Recherche Math\'ematiques de Rennes, 
Universit\'e de Rennes 1, Campus de Beaulieu, 35042 Rennes,
France.}

\footnotetext[2]{Centre de Math\'ematiques Appliqu\'ees \& Centre National de la Recherche Scientifique, Ecole Polytechnique, 91128 Palaiseau, France.}

\footnotetext[3]
{Laboratoire Jacques-Louis Lions \& Centre National de la Recherche Scientifique,
Universit\'e Pierre et Marie Curie, 4 Place Jussieu,  75252 Paris, France.
\\
Email: {\sl benjamin.boutin@univ-rennes1.fr, coquel@cmap.polytechnique.fr, contact@philippelefloch.org.}  
}

\begin{abstract}
This series of papers is devoted to the formulation and the approximation of coupling problems for nonlinear hyperbolic
equations. The coupling across an interface in the physical space is formulated in term of an augmented system of partial differential equations. In an earlier work, this strategy allowed us to develop a regularization method based on a thick interface model in one space variable. In the present
paper, we significantly extend this framework and, in addition, encompass equations in several space variables. This new formulation includes the coupling of several distinct conservation laws and allows for a possible covering in space.  Our main contributions are, on one hand, 
 the design and analysis of a well--balanced finite volume method on general triangulations
and, on the other hand, a proof of convergence of this method toward entropy solutions, extending Coquel, Cockburn, and LeFloch's theory (restricted to a single conservation law without coupling). The core of our analysis is, first, the derivation of entropy inequalities as well as a discrete entropy dissipation estimate and, second, a proof of convergence toward the entropy solution of the coupling problem.  
\end{abstract}



\section{Introduction}
\label{INT} 

\subsubsection*{Objective of this paper}

This is a continuation of a series of papers \cite{BoutinCoquelLeFloch09a,BoutinCoquelLeFloch09b,BoutinCoquelLeFloch09c} 
devoted to coupling techniques for nonlinear hyperbolic
equations. In the present paper, we deal with the coupling of {\sl multi-dimensional hyperbolic equations,}
based on an arbitrary partition of the
physical domain. The main motivation stems from the study of complex systems resulting from the
combination of elementary components modeled by different equations. Indeed, each component may
be subject to physical phenomena involving fairly different time and space scales. Tackling this multiscale problem
with sufficient accuracy and efficiency requires to consider distinct physical models for the description of
each component, so as to end up with a suitable description of the whole physical system. For instance, large--scale
power plants provide a typical example of interest~\cite{neptune}.
Describing the evolution in time requires the exchange of transient informations
at each physical boundary separating two distinct hyperbolic models. These transient informations or boundary conditions 
are referred hereafter to as \emph{coupling conditions}.

This problem seems to be rather new in the applied mathematical community. Its analysis was initiated by Godlewski and Raviart \cite{GodlewskiRaviart04} for scalar equations in one space variable. Therein, the coupling problem is formulated in terms of two initial
boundary value problems (IBVP) supplemented with coupling boundary conditions at a given (infinitely
thin) interface. These boundary conditions are stated in such a way that in ``most cases'' they ensure the
continuity of the main unknown, at least, roughly speaking, as long
as no wave from the left-- and right--hand problems interact at the interface. If this condition does not hold, 
one says that the interface is resonant. In Ambroso \emph{et al.} \cite{GDT06,GDT07,GDT08b},
quite general continuity conditions based on a nonlinear transformation of the unknown were investigated. Following earlier investigations by LeFloch and collaborators~\cite{DLM,JL,LeFloch-93,LeFloch-book,LeFloch-review,LL}  on undercompressive shocks and interfaces, nonconservative hyperbolic systems, and boundary value problems, we stress that {\sl additional information} coming from physical modeling is necessary in order to single out the relevant continuity conditions (or transmission condition) at the interfaces.
Various conditions were introduced and studied in a variety of physical frameworks, ranging from gas
dynamics \cite{GDT06} to multiphase flows \cite{GDT08c,GDT07}. 


\subsubsection*{Thin interface versus thick interface}

We briefly mention some transmission conditions of interest when the coupling invoves two Euler systems with
distinct pressure laws. Typically, one imposes the continuity of the density $\rho$, velocity component $u$, and
pressure $p$, or else the continuity of the convervative variables $(\rho,\rho u,\rho E)$ (where $E$ denotes the total energy). 
These conditions determine the class of constant solutions in the time and the space variables, and have 
either constant density, velocity, and pressure, or else constant density, momentum, and total energy.
In both cases, the proposed coupling conditions are {\sl nonconservative}, since 
the total mass of density, momentum, and total energy do vary with time. A fully conservative coupling may
turn relevant in some applications, as was addressed in \cite{GDT08b} (following \cite{GodlewskiRaviart04}) via suitable a relaxation method. 

The resonance phenomena, likely to take place around thin interfaces, brings a main difficulty
in the mathematical analysis of coupled initial boundary value problems. Solutions can be
shown to exist under general conditions but resonance generally comes at the expense of uniqueness. We
refer the reader to \cite{BoutinChalonsRaviart09} for a discussion of scalar equation and to \cite{GDT06} for a distinct behavior exhibited for characteristic but non-resonant interfaces.
A selection criterion for discontinuous solutions, therefore, is required. Recall
that, for the fully conservative coupling, several distinct entropy criteria have been proposed, each
selecting a distinct weak solution in agreement with the physical context. (See \cite{BurgerKarlsen08} for a review and \cite{SeguinVovelle03,HelluySeguin06,BachmannVovelle06}).

To deal with general transmission conditions, a macroscopic selection principle analogous to the entropy
inequalities is not available and one needs a detailed description of microscopic mechanisms coming
with suitable regularizing procedures. In \cite{BoutinCoquelGodlewski06, BoutinCoquelLeFloch09a,BoutinCoquelLeFloch09b}, we introduced an
alternative modeling for the coupling problem associated with two hyperbolic equations in one space variable. This alternative method relies on the introduction of an augmented PDE (partial differential equations) formulation that avoids the detailed description of the interfaces. The proposed formalism is based on an additional unknown, the \emph{color function} which takes values in the range $[0,1]$. Extreme values $0$ and $1$ are devoted to restore the left-- and right-- problems to be coupled, while intermediate values may serve to model a smooth transition from one problem to the other.


\subsubsection*{Outline of this paper}

The interest in this augmented formulation comes from its very capability to support various
regularization mechanisms. Viscous perturbations were introduced by the authors \cite{BoutinCoquelLeFloch09a,BoutinCoquelLeFloch09b} for scalar problems and, specifically, a self--similar approach was developed, which allows for the study of the existence and
uniqueness of solutions to the coupled Riemann problem in the limit of vanishing viscosity. The analysis has been carried out for a general class of systems \cite{BoutinCoquelLeFloch09a} and led to an existence theory 
under fairly general assumptions. In \cite{BoutinCoquelLeFloch09b}, the analysis of the internal
structure of resonant interfaces was performed and led us to a characterization of the set of admissible Riemann solutions. Despite of the viscous mechanisms a failure of uniqueness may be observed for resonant
infinitely thin interfaces.

Riemann solutions may be indeed understood as
describing the long time asymptotic of the solutions of the Cauchy problem. Failure of uniqueness for thin
interfaces just reflects the property that distinct regularizations of thin interfaces may give rise to
different solutions and thus with a distinct long time behavior. This observation has motivated a second
regularization procedure based on \emph{thick interfaces}.

Thick interfaces within the augmented PDE framework are based on a 
regularization of the discontinuous color function, considered in the thin regime. This approach
has been introduced by the authors first within the framework of two coupled conservation laws in one space variable
\cite{BoutinCoquelLeFloch09c}. Existence and uniqueness for the coupled Cauchy problem was proven for 
general initial data with bounded sup--norm. One of the main ingredients of proof was the
design of a well--balanced finite volume method. The well-balanced property means that the exact
constant solutions selected by a given transmission condition are exactly preserved at the discrete level, whatever choice is made for the regularized color function. This consistency property is of central importance.

In the present paper, we introduce a framework which covers coupling problems in {\sl several} space variables and with {\sl distinct} hyperbolic equations, allowing for possible {\sl covering in space}. An outline of this paper is as follows. 

\begin{itemize}

\item  In Section~2, we show how to extend the two existing coupling
frameworks in one space variable to the coupling of two distinct hyperbolic equations in several space variables. We then show how to extend the augmented PDE formalism to encompass the case of several
hyperbolic equations with possible covering. In our approach, a vector--valued color map is introduced so that each component is
 associated with one of the equations and takes values in the interval $[0,1]$. The specific definition of the regularized color function provides us with 	a transition from an equation to another (possibly more than one). 

\item We check the existence and uniqueness of entropy solutions to the coupled Cauchy problem (with
initial data in $L^\infty$) under fairly general assumptions on the transmission conditions and the
equations under consideration.

\item Next, in Section~3, we design a robust and flexible finite volume framework based on
general triangulations. Importantly, by construction, the proposed method is well--balanced and our strategy for achieving
the well--balanced property is an extension of the subcell reconstruction approach (analyzed by 
Bouchut in a different context \cite{Bouchut04}). In particular, we introduce two distinct meshes: the first one, the primal mesh,
 describes the main coupled unknown. The second mesh, referred to as the dual one, is built from the primal mesh and carries 
the approximation of the color function. A comprehensive derivation of this dual mesh is also proposed.

\item In Sections~4 and 5, 
we then derive a sup--norm estimate, and observe that a uniform estimate on the total variation seems to be out of reach, 
due to the subcell reconstruction procedure. Consequently, we propose to use DiPerna's framework based on entropy
measure--valued solutions and, by deriving suitable entropy inequalities and entropy dissipation bounds, 
we establish the strong convergence of the proposed method. 

\item Finally, in Section~6, numerical experiments are presented which concern problems with covering in space and, therefore, highlight the interest of the new coupling strategy.

\end{itemize}


\section{A framework for multi--dimensional coupling} 
\label{formulation}

\subsection{Coupling of two systems} 

\subsubsection*{Pasting together two initial boundary value problems}

In this section, we introduce the coupling problem associated with two
hyperbolic equations coupled at a given interface. At this stage, it suffices to think of an
hyperplane, say $\{x_1=0\}$. We will extend two distinct coupling strategies that have been
developed in a single space variable.
The first procedure consists in modeling the coupling problem as two initial boundary value problems
(IBVP) with time dependent boundary conditions prescribing the evolution of traces of the coupled
solutions on both sides of the hyperplane $\{x_1=0\}$.
In contrast, the second strategy introduced in
\cite{BoutinCoquelLeFloch09a,BoutinCoquelLeFloch09b,BoutinCoquelLeFloch09c} is based on augmented PDE
systems, and handles the coupling problem as an initial data problem written over the entire space $\R^d$.
This new framework brings mathematical and numerical advantages, pointed out at the end of this
section.

Consider an hyperplane of $\R^d$ with unit normal vector $\nu\in\R^d$, we denote $\mathcal{H}
= \{x\in\R^d / x.\nu = 0\}$, partitioning $\R^d$ into two half-domains $\calD_- = \{x\in\R^d / x.\nu <
0\}$ and $\calD_+ = \{x\in\R^d / x.\nu > 0\}$. In each open subdomain, a distinct conservation
law is prescribed:
\be
\label{multid-cl}
 \dt \w + \sum_{i=1}^d \dxi a_i^{\pm}(\w) = 0,\qquad \w(t,x)\in\R,\quad t>0,\quad x\in \calD_\pm,
\ee
where the flux-functions $A^\pm:\R\to\R^d$, with components $(a_i^\pm)_{i=1,\ldots,d}$, are assumed to be
twice differentiable for definiteness. An initial data $\w(0,x)=\w_0(x)$ supplements this formulation, but
obviously, some extra-condition, the \emph{coupling condition}, must be prescribed at the
interface $\calH$. For simplicity, we restrict ourselves in this introductory section to piecewise smooth
solutions $w$ with bounded left and right traces at the interface $\calH$:
$$
  \w(t,y\pm) = \lim_{z\to 0+} \w(t,y\pm z \nu),\quad y\in\calH.
$$
Then, it sounds natural that the coupling condition we seek should relate these traces
\be
 \label{star1}
 \mathfrak{C}(\w(t,y-),\w(t,y+))=0,\quad t>0,\ y\in \calH,
\ee
for some nonlinear mapping $\mathfrak{C}$ to be specified.
The implicit function theorem is assumed to apply so as to recast \eqref{star1} in the more tractable
form
\be
\label{star2}
 \w(t,y-) = \mathfrak{c}(w(t,y+)),\quad t>0,\ y\in \calH,
\ee
for some function $\mathfrak{c}$ mapping $\R$ onto $\R$.
Assuming from now on $\mathfrak{c}$ to be strictly monotone, we re-express the
above coupling condition in terms of two nonlinear monotone functions $\cplL$ and $\cplR$ with
$\mathfrak{c}=\cplL^{-1}\circ\cplR$:
\be
\label{multid-coupling}
 \cplL(\w(t,y-)) = \cplR(\w(t,y+)),\quad t>0,\ y\in \calH.
\ee
Here and without loss of generality, $\cplL$ and $\cplR$ are assumed to be strictly increasing and to map
$\R$ onto $\R$, and their inverse functions are denoted by $\lpcL$ and $\lpcR$.
On the basis of this pair of functions, we introduce the following useful change of unknown:
\be
 \label{star4}
 u(t,x) =
\begin{cases}
 \cpl_-(\w(t,x)), &t>0,\ x\in\calD_-,\\
 \cpl_+(\w(t,x)), &t>0,\ x\in\calD_+,\\
\end{cases}
\ee
so that the coupling condition \eqref{multid-coupling} resumes to:
\be
 \label{star5}
 u(t,y-)=u(t,y+),\quad y\in\calH.
\ee
Observe that in the new unknown, \eqref{star5} juste reads as a continuity condition for $u$.  

It is worth underlining that \eqref{star5} defines the constant solutions
of the coupling problem \eqref{multid-cl}-\eqref{multid-coupling}, i.e.~time independent functions $w=w(x)$
which solve \eqref{multid-cl} and \eqref{star5}. Such functions clearly obey 
\be
 \label{star6}
 u(w(x))=u^\star,\quad x\in\R^d\setminus\calH,
\ee
for some real $u^\star\in\R$.
This observation actually just opens a path toward the mathematical study of perturbed
solutions of the trivial solution \eqref{star6}. We refer the reader to the work
\cite{BoutinCoquelLeFloch09a} devoted to the existence of self--similar coupled solutions for systems.

Observe that the coupling condition \eqref{multid-coupling} plays the role of a pair of transient boundary
conditions for the interface $\calH$. In other words, the coupling framework we address merely takes the
form of two nonlinear hyperbolic IBVPs linked via the transient boundary condition
\eqref{multid-coupling}. It becomes clear that the coupling condition
\eqref{multid-coupling} is actually expressed in a strong sense, since it is formulated without reference
to the signature of the wave speeds at the interface $\calH$. It is nevertheless well-known that the sign
of the wave velocities at a boundary directly affects the boundary condition to be prescribed. Hence, the
coupling condition \eqref{multid-coupling} or its equivalent form \eqref{star5} must be stated in a  weak sense. 

We follow the approach for coupled problem in {\sl one space variables,} originally developed in Godlevski, Raviart, and collaborators 
(cf.~\cite{GodlewskiLeThanhRaviart05,GodlewskiRaviart04} and \cite{GDT06,GDT07, ChalonsRaviartSeguin08}). 
In these papers, a weak form of the coupling condition
\eqref{multid-coupling} is formulated in terms of an {\sl admissible boundary set,} proposed by 
Dubois and LeFloch \cite{DuboisLeFloch88} and based on the notion of Riemann solutions.
Such a notion here readily extends since the coupling condition expressed in \eqref{multid-coupling} just
links the traces of the coupled solution $w$ in the normal direction $\nu$ and thus essentially concerns
the quasi-one dimensional form of \eqref{multid-coupling} written for plane wave solution in the
$\nu$-direction. Thus it turns natural to consider the coupled problem in one space variable (up to some shift in the space variable $z$)
\be
\label{multid-1Dform}
 \dt \w + \partial_z A_\nu^\pm(\w) = 0, \quad t>0,\ \pm z>0,
\ee
where we have set
$A_\nu^\pm(\w) = A^\pm(\w) \cdot \nu$. In order to state the weak form of the boundary condition $\cplL(\w(t,y-)) = \cplR(\w(t,y+))$,
$y\in\calH$, we first recall the Dubois-LeFloch framework for say the right IBVP:
\begin{eqnarray}
  &&\dt \w + \partial_z A_\nu^+(\w) = 0, \quad t>0,\ z>0,\label{star7}\\
  &&w(t,0+)=b,\quad t>0,\label{star8}
\end{eqnarray}
for some prescribed real  $b$. Following Dubois and LeFloch, a  weak formulation of
\eqref{star8} is stated in terms of Riemann solutions associated with \eqref{star7}, that is, 
$\calW(\cdot;w_L,w_R)$ (for left-- and right--hand states $w_L$, $w_R$):  
\be
\label{star9}
 w(t,0+)\in\calO_\nu^+ (b)= \big\{\calW(0+;b,w), w\in\R\big\}.
\ee
Observe that the analogous of \eqref{star9} for the left IBVP built from $A_\nu^-$ would read
$$
  w(t,0-)\in\calO_\nu^-(b)= \big\{\calW(0-;w,b), w\in\R\big\}.
$$
These considerations naturally yield us to the following coupled boundary conditions
\eqref{multid-coupling} at any point $y\in\calH$ and for $t>0$: 
\be
\label{weakcpl}
 \begin{aligned}
  \w(t,y+) \in \mathcal{O}_\nu^-(\cplR^{-1}\circ \cplL(\w(t,y-))),\\
  \w(t,y-) \in \mathcal{O}_\nu^+(\cplL^{-1}\circ \cplR(\w(t,y+))).
 \end{aligned} 
\ee

This simple problem, based of two coupled equations at a given hyperplane, can be easily extended to more general
interfaces resulting from a partition of $\R^d$ into two non--overlapping open sets $\calD_+$ and $\calD_-$
such that $\overline{\calD}_-\cup\overline{\calD}_+=\R^d$, separated by a smooth boundary $\partial
D=\overline{\calD}_-\cap\overline{\calD}_+$. Smoothness allows to define without ambiguity an unit normal
vector $\nu(y)$ for all $y\in\partial D$ so that left and right traces at $\partial D$ for piecewise
smooth solutions of the coupled problem \eqref{multid-coupling} may be defined as follows:
$$
 \w(t,y\pm) = \lim_{z\to 0+} \w(t,y\pm z \nu(y)),\quad y\in\partial\calD.
$$
The expected coupling condition just takes the weak form \eqref{weakcpl}.


\subsubsection*{Coupling technique based on an augmented PDE's system}

As already emphasized, an alternative coupling framework has been introduced by the authors in
\cite{BoutinCoquelLeFloch09a}. Instead of dealing with two IBVPs coupled at a given interface via boundary conditions, our
 new approach treats the coupling problem as a single initial
value problem, over the entire space $\R^d$ via an augmented PDE formulation. This strategy 
was introduced by the authors \cite{BoutinCoquelLeFloch09a,BoutinCoquelLeFloch09b,BoutinCoquelLeFloch09c} for
problems  in one space variable. In order to encompass problems in several space variables, we
perform hereafter a comprehensive derivation.

The derivation starts from the characteristic functions of the two open sets $\calD_-$ and $\calD_+$, we
denote by
$$
 v_-=\chi_{\calD_-},\quad v_+=\chi_{\calD_+}.
$$
It heavily makes use of the change of unknown $u$ introduced in \eqref{star4}, we rephrase as:
$$
 u(t,x)=
\begin{cases}
 \cpl_-(\w(t,x)), &\textrm{if } v_-(x)=1,\\
 \cpl_+(\w(t,x)), &\textrm{if } v_-(x)=0,\textrm{ i.e. if } v_+(x)=1,\\
\end{cases}
\quad t>0,\ x\notin\partial\calD.
$$
Equipped with these notation, we recast the two distinct hyperbolic equations in $\calD_\pm$ in terms of $u$:
$$
 \lpcLR'(u)\dt u + \sum_{i=1}^d \lpcLR'(u) {a_i^\pm}'(\lpcLR(u)) \dxi u = 0,\quad t>0,\ x\in\calD_\pm,
$$
restricting ourselves to smooth solutions in a first stage. Recall that $\lpcR$ (respectively $\lpcL$)
denotes the inverse function of $\cplR$ (resp. $\cplL$).
We further proceed by rewritting the above two equations in term of a single equation in $x\in R^d\setminus\partial D$:
$$
\big(v_-\lpcL'(u)+v_+\lpcR'(u)\big)\dt u
+ \sum_{i=1}^d
\Big(v_-\lpcL'(u){a_i^-}'\!(\lpcL(u))+v_+\lpcR'(u){a_i^+}'\!(\lpcR(u))\Big)\dxi u = 0,
$$

At this stage, it must be noticed that the two characteristic functions $v_-$ and $v_+$ in the above
equation may be replaced by a single function say $v$, by setting for instance $v_-(x) = 1-v(x)$ and
$v_+(x)=v(x)$ for $x\in\R^d\setminus\partial D$ with $v=\chi_{\calD_+}$. In the following, such a function
$v$ will be refered to as a \emph{color function}. For the moment $v$ is nothing but a step function
taking values in $\{0,1\}$ but it is important to conceive $v$ as a function taking values in the interval
$[0,1]$ so that the value 0 restores the equation set in $\calD_-$ while the value 1 restores the equation
set in $\calD_+$. Intermediate values of $v$ then may be thought as modeling a smooth shift from one
problem to the other. Keeping this in mind we now recast the equations above in the form of an augmented PDE
system with unknown $u$ and $v$, for $t>0$ and $x\in R^d\setminus\partial D$:
\be
 \label{star11}
\begin{aligned}
 &\big((1-v)\lpcL'(u)+v\lpcR'(u)\big)\dt u 
+ \Big((1-v)\lpcL'(u)\nabla A^-\!(\lpcL(u))+v\lpcR'(u)\nabla A^+\!(\lpcR(u))\Big)\cdot\nabla_x u =
0,\\
 &\dt v =0.
\end{aligned}
\ee
We stress that the $1$--dimensional form of these equations written for plane wave solutions in the direction
$\nu$ reads ($t>0$, $x\in R^d\setminus\partial D$, or $\pm z >0$):
\be
\label{planeequation}
  \begin{aligned}
 &\big((1-v)\lpcL'(u)+v\lpcR'(u)\big)\dt u + \Big((1-v)\lpcL'(u)\nabla A^-\!(\lpcL(u))\cdot\nu+v\lpcR'(u)\nabla
A^+\!(\lpcR(u))\cdot\nu\Big)\partial_z u = 0,\\
 &\dt v =0.
\end{aligned}
\ee
This system is easily seen to be hyperbolic if (and only if) the following quantity is not zero
\be
 \label{star12}
 (1-v)\lpcL'(u)\nabla A^-(\lpcL(u))\cdot\nu+v\lpcR'(u)\nabla A^+(\lpcR(u))\cdot\nu \neq 0.
\ee
For such states, the standing wave associated with the additional unknown $v$ can be seen to admit $u$ as
a Riemann invariant. In other words, as long as the non--degeneracy condition \eqref{star12} is valid, $u$
stays continuous at the jumps of the color function $v$, namely across the coupling boundary $\partial D$
at which the value of $v$ shifts from 0 to 1. In other words and whenever \eqref{star12} is valid, the
coupling condition \eqref{star5} is satisfied in the strong sense across the standing wave
\be
\label{star13}
 u(t,y-) = u(t,y+), \quad y\in\partial D.
\ee
Violation of the condition \eqref{star12} at a point of jump for $v$, namely at the interface
$\partial\calD$, expresses that waves from the left and right propagate with opposite sign at the
interface; the first order system \eqref{planeequation} is then only weakly hyperbolic. This is the
 resonance phenomena for which we refer the reader to, for instance, Goatin and LeFloch
\cite{GoatinLeFloch04} and the references cited therein. As far as the coupling issue is concerned, the continuity condition \eqref{star13}
is no longer satisfied and the weak form \eqref{weakcpl} of the coupling condition must be addressed.
Turning considering the augmented formulation \eqref{planeequation}, resonance phenomena has been studied
in depth in \cite{BoutinCoquelLeFloch09a} in the scalar case thanks to a self-similar viscous
perturbation. The Riemann solutions for \eqref{planeequation} defined in the limit of vanishing
viscosity satisfy \eqref{weakcpl} when resonance takes place. To sum up, weak solutions of
the augmented equations \eqref{planeequation} and thus their multi--dimensional form \eqref{star11}
naturally encode the weak form of the coupling condition.

We now generalize the rather special form of the augmented equation and adopt the
general framework introduced by the authors in \cite{BoutinCoquelLeFloch09a} (which also applies to systems in one
space variable). We thus introduce coupling functions $\cz:\R\times[0,1]\to \R$ and $\ci:\R\times[0,1]\to
\R$ with $i\in\{1,\ldots,d\}$ satisfying the following consistency properties:
\be
\label{consistency}
\begin{array}{ll}
\lim_{\v\to 0}\cz(\u,\v) = \lpcL(\u),& \lim_{\v\to 1}\cz(\u,\v) = \lpcR(\u),\\
\lim_{\v\to 0}\ci(\u,\v) = a_i^-(\lpcL(\u)), &\lim_{\v\to 1}\ci(\u,\v) =  a_i^+(\lpcR(\u)),
\end{array}
\ee
so as to consider in place of \eqref{star11} the general augmented equations:
\be
\label{star14}
\begin{aligned}
\displaystyle
\partial_\u \cz(\u,\v) \dt \u + \sum_{i=1}^d \partial_\u \ci(\u,\v) \dxi \u & = 0, \\
\dt \v & = 0,
\end{aligned}
\qquad t>0,\ x\in\R^d,
\ee
which equivalently recasts as:
$$
\begin{aligned}
\displaystyle
\dt \cz(\u,\v) + \sum_{i=1}^d \dxi \ci(\u,\v) - \sum_{i=1}^d \partial_v \ci(\u,\v) \dxi \v & = 0,\\
\dt \v & = 0.
\end{aligned}
\qquad t>0,\ x\in\R^d.
$$
In the following, the coupling functions $\cz$ and $(\ci)_{1\leq i \leq d}$ are smooth and
$$
 \cz,(\ci)_{1\leq i \leq d} \in \mathcal{C}^2(\R\times[0,1]),
$$
and $\cz$, in addition, obeys 
$\partial_\u \cz(\u,\v) > 0,\quad \u\in\R,\ \v\in[0,1]$, which is
 a non--degeneracy condition for the time arrow in \eqref{star14}.

The resonance phenomenon is the main difficulty in the coupling problematic and has made the matter of
previous works especially in the one--dimensional case
\cite{BoutinCoquelLeFloch09a,BoutinCoquelLeFloch09b,BoutinCoquelLeFloch09c}.
In this one-dimensional setting, the analysis proves that if the resonance occurs for \eqref{star14} the
self--similar weak solutions obtained via self--similar regularization satisfy the
coupling relation \eqref{star13}. Nevertheless in the general case where resonance may appear, uniqueness
then generaly fails for the initial value problem.

The central interest of the augmented formulation \eqref{star14} over more classical coupling approaches
built from a collection of IBVPs stems from the fact it can be supplemented with a variety of regularizing
mechanisms at the coupling interfaces.
These regularization mechanisms are intended to handle the resonance phenomena which is likely to take
place at the interfaces.
A first regularization procedure relies on introduction of suitable viscous mechanisms. Such mechanisms
yield a non trivial internal structure to resonant interfaces which proves to be useful in the selection
of discontinuous solutions.
It turns that discontinuous solutions may not be unique for thin interfaces. The augmented formulation
\eqref{star14} actually allows for another regularization mechanism based on thick interfaces. 
The color function which is naturally discontinuous (for the description of thin interfaces) is 
regularized in the thick regime. Such a regularization technique has been analyzed in one space
variable, and existence and uniqueness of a solution for the Cauchy problem has been established. In the next
section, we show how to extend this regularization procedure to several space variables.

\begin{remark}
\label{example2}
An example of coupling functions satisfying the above conditions is 
$$
\begin{aligned}
 \cz(\u,\v)&=(1-\v)\lpcL(\u)+\v\lpcR(\u),\\
 \ci(\u,\v)&=(1-\v)a_i^-(\lpcL(\u))+\v a_i^+(\lpcR(\u)),\ 1\leq i \leq d.
\end{aligned}
$$
\end{remark}


\subsection{A framework for multi--component coupling problems}

\subsubsection*{Multi-component coupling of initial boundary value problems}

We are in a position to present the general coupling framework we intend to analyze in this paper. The
proposed extension treats the coupling of $(L+1)$, $L\geq 1$, distinct conservation laws in several space
dimensions, with possible covering. The coupling modeling via augmented PDEs relies on a partition of the
space $\R^d$ in a finite number of non--overlapping, non--empty and open sets $(\calD^l)_{0\leq l \leq L}$:
\be
 \bigcup_{l=0}^L \overline{\calD^l} = \R^d.
\ee
The set of boundaries $\calB$ are given by
\be
 \calB = \bigcup_{k\neq l} \overline{\calD_k}\cap \overline{\calD_l}.
\ee
An interface $\calH_{kl}$ is by definition the part of the boundary of $\calD_k$ which is only shared with
$\calD_l$ (see also Fig.~\ref{intro-multiD} for an example with $N=2$ and $L=3$):
\be
\calH_{kl} = (\overline{\calD_k}\cap \overline{\calD_l}) \setminus \bigcup_{i\neq
k,l}\overline{\calD_i}.
\ee
These interfaces $\calH_{kl}$ are supposed to be smooth enough so that they admit an unit normal vector $\nu_{kl}(y)$, which is well--defined except at some ``exceptional'' points (like corners, etc.). 
 We suppose the set of boundaries $\calB$ to be of $d$-dimensional Lebesgue measure zero, and,
more precisely, the remaining set $\calB \setminus (\cup_{k\neq l}\calH_{kl})$ has only components of
Hausdorff dimension less than or equal to $(d-2)$ (see for example the four points underlined in
Figure~\ref{intro-multiD}).

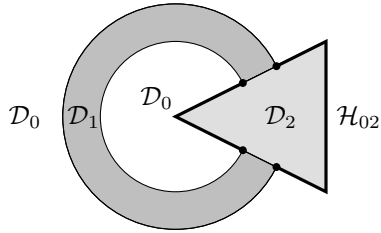
\begin{figure}[!ht]
\centering
\begin{tikzpicture}[scale=1.0,
 dot/.style={shape=circle,fill=black,minimum size=3pt,inner sep=0pt,outer sep=2pt}]
\draw[fill=lightgray] (0,0) ellipse (1.5 and 1.5);
\draw (0,0) ellipse (1.5 and 1.5);
\draw[fill=white] (0,0) ellipse (1 and 1);
\draw[fill=white] (0,0) -- (2,1) -- (2,-1) -- cycle;
\draw[fill=lightgray!50!white] (0,0) -- (2,1) -- (2,-1) -- cycle;
\draw (0,0) -- (2,1) -- (2,-1) -- cycle;
\node at (-2,0) {$\calD_0$};
\node at (-0.25,0.25) {$\calD_0$};
\node at (-1.22,0) {$\calD_1$};
\node at (1.4,0) {$\calD_2$};
\draw[very thick] ({2/sqrt(5)},-{1/sqrt(5)}) node[dot]{} -- (0,0) -- ({2/sqrt(5)},{1/sqrt(5)})
node[dot]{};
\draw[very thick] ({1.5*2/sqrt(5)},-{1.5*1/sqrt(5)}) node[dot]{} -- (2,-1) -- (2,1) node[midway,right]
{$\calH_{02}$} -- ({1.5*2/sqrt(5)},{1.5*1/sqrt(5)}) node[dot]{};
\end{tikzpicture}
\caption{Boundaries (in bold-face $\calH_{02}$, circle points being excluded)}
\label{intro-multiD}
\end{figure}
In each domain $\calD_l$, the unknown $\w$ is governed by a specific conservation law with flux-function
$A^l=(a_i^l)_{1\leq i \leq d}:w\in\R\mapsto A^l(w)\in\R^d$:
\be
\label{num1}
 \dt \w + \sum_{i=1}^d\dxi a_i^{l}(\w) = 0,\qquad \w(t,x)\in\R,\quad t>0,\quad x\in\calD_l.
\ee

Following the description introduced in the previous section, we start focusing the discussion on the
definition of constant states \eqref{star4}-\eqref{star5}-\eqref{star6} for the global problem set on the
whole space $\R^d$. These solutions are recovered through a certain change of variable in each subdomain
$\calD_l$, for $l=0,\dots,L$,
\be
\label{chgtvarrrr}
 u(t,x) = \theta_l(w(t,x)),\quad t>0,\ x\in\calD_l,
\ee
so that the stationnary solutions $w(x)$ for the coupled problem \eqref{num1} are the real constants
$u^\star$ in the $u$ variable:
\be
\label{num3}
u(w(x))=u^\star,\quad x\in\R^d\setminus\calB.
\ee

The coupling functions $\cpl_l$ are supposed to map increasingly $\R$ onto itself and we denote once again
$\lpc_l$ the inverse functions: 
\be
 \lpc_l= \cpl_l^{-1}, \quad l=0,\ldots,L.
\ee
Observe that a different outlook where the coupling functions would be associated to the interfaces
$\calH_{kl}$ rather than to the domains themselves could only be local in space and therefore would not
allow a matching of local constant solutions so as to define a global constant solution. Here we
take advantage of the local formulation at each interface in terms of the traces of $w$, say
$\w(t,y^k)$ and $\w(t,y^l)$ on the $\calD_k$-- and $\calD_l$--side of $\calH_{kl}$, respectively 
(relatively to its normal $\nu_{kl}(y)$): 
$$
 \cpl^k(\w(t,y^k))=\cpl^l(\w(t,y^l)),\quad t>0,\ y\in\calH_{kl}.
$$

The following augmented PDE formulation is based on a vector-valued \emph{color
function} that merges the description of the coupling problem. In this multi-domain approach,
this function is based on the set of characteristic functions of each domain:
\be
\label{num4}
 v_\emptyset =\chi_{\calD_0},\quad v_1=\chi_{\calD_1}, \ldots,\quad v_L=\chi_{\calD_L},
\ee
so that the change of variable \eqref{chgtvarrrr} may be also rewritten
\be
 \label{num5}
u(t,x)=\theta_l(w(t,x)),\quad x\in\R^d\setminus\calB \textrm{ such that } v_l(x)=1.
\ee
Observe that since the $(L+1)$ domains are a partition of the whole space $\R^d$, only $L$ of the above
characteristic functions are useful to complete the coupling description of the $(L+1)$ domains. Up to
some relabeling we choose $v_1,\ldots,\v_L$, so that $v_0$ is recovered thanks to
\be
\label{num5b}
 v_\emptyset(x)=1-\sum_{l=1}^L v_l(x),\quad x\in \R^d\setminus\calB.
\ee


\subsubsection*{Multi-component coupling based on an augmented PDE's system} 

In the following we make use of the vector-valued \emph{color function} $v=(v_1,\ldots,v_L)$. At this stage,
it takes values in the discrete set $\{0\}\cup\{e_1\}\cup\ldots\cup\{e_L\}$ where $e_l$ stands for the
$l$-th canonical vector of $\R^L$. This color function is intended to be regularized and to take values in the convex hull
$
\mathbb{B}^{L}_+ = \big\{\v=(\v_1,\ldots,\v_L)\in\R^{L}\big/ \v_l\geq 0,\quad \sum_{l=1}^L \v_l \le
1\big\}
$. The problem \eqref{num1} is then understood in the augmented form (with $t>0$, $x\in\R^d$) 
\be
\label{num6}
\begin{aligned}
\displaystyle
\partial_\u \cz(\u,\v) \dt \u + \sum_{i=1}^d \partial_\u \ci(\u,\v) \dxi \u & = 0, \\
\dt \v & = 0,
\end{aligned}
\ee
where the coupling functions $\cz$ and $\ci$ are assumed to restore the formulation \eqref{num1} in terms
of $u$ in each open set $\calD_l$, that is:
\be
\label{multid-pure}
\begin{array}{ll}
\lim_{\v\to 0}\cz(\u,\v) = \gamma_0(\u), \quad &\lim_{\v\to e_l}\cz(\u,\v) = \gamma_l(\u),\\
\lim_{\v\to 0}\ci(\u,\v) = a_i^0(\gamma_0(\u)),&\lim_{\v\to e_l}\ci(\u,\v) = a_i^l(\gamma_l(\u)), \quad
1\leq i\leq d.
\end{array}
\ee
The following smoothness and monotonicity assumptions are required
\be
 \label{smoothC}
 \cz,\ci \in \mathcal{C}^2(\R\times\mathbb{B}^{L}_+),
\ee
\be
 \label{monotC0}
 \partial_\u \cz(\u,\v) > 0,\qquad u\in\R,\quad v\in \mathbb{B}^{L}_+.
\ee
This last property ensures the validity of the change of variable $\u\mapsto\cz(\u,\v)$ for any
fixed $v$, and the non--degenerate nature of the time-arrow in the augmented equations \eqref{num6}.

In this context, the augmented system in the main unknown $u$ reads
\be
\label{cauchyuv}
\begin{aligned}
\displaystyle
\dt \cz(\u,\v) + \sum_{i=1}^d \dxi \ci(\u,\v) - \sum_{i=1}^d \sum_{l=1}^L \partial_{v_l} \ci(\u,\v) \dxi
\v_l & = 0, \\
\dt \v & = 0.
\end{aligned}
\ee

In the following, it will be useful to consider the same system written in the variable $\w=\cz(\u,\v)$
(denoted by $\w(\u,\v)$, and with inverse $\u(\w,\v)$ for each fixed $\v$). Equipped with such a change of
unknown, \eqref{cauchyuv} becomes
\be
\begin{aligned}
\displaystyle
\dt \w+ \sum_{i=1}^d \dxi \ffi(\w,\v) - \sum_{i=1}^d \sum_{l=1}^{L} \lil(\w,\v) \dxi \v_l & = 0, \\
\dt \v & = 0,
\end{aligned}
\ee
where $\ffi(\w,\v)=\ci(\u(\w,\v),\v)$ and $\lil(\w,\v)=\partial_{v_l} {\ci}_{|u}(\u(\w,\v),\v)$ with
$i\in\{1,\ldots,d\}$ and $l\in\{1,\ldots,L\}$ (i.e. $\ll=\nabla_v \C$). Hereafter and to shorten the
notation, we write 
\be
\label{cauchywv}
\begin{aligned}
\displaystyle
\dt \w+ \nabla \cdot \flux(\w,\v) - \ll(\w,\v) : \nabla \v & = 0, \\
\dt \v & = 0,
\end{aligned}
\ee
with obvious notation. 

%
%

\subsubsection*{Entropy stability and well-posedness}

As already emphasized, in this work we propose a regularization mechanism based on thick interfaces that
are modeled by any suitable regularized version of the discontinuous vector-valued color function $v$
introduced in \eqref{num4}-\eqref{num5b}. For definiteness, we shall consider color functions $v$ in
$W^{2,\infty}(\R^+\times\R^d,\mathbb{B}^{L}_+)$. Obviously, it suffices to choose the initial data $v_0$
in $W^{2,\infty}(\R^d,\mathbb{B}^{L}_+)$ so as to inherit from the required smoothness in the $v$ solution
of the augmented equations \eqref{cauchywv}. In turn and arguing about this smoothness property, the
equations under consideration reduce to an inhomogeneous scalar equation in $w$:
\be
\label{withsource}
\dt \w+ \nabla \cdot \flux(\w,\v(x)) = \ll(\w,\v(x)) : \nabla \v(x),
\ee
where the right--hand side just plays the role of a classical source term; namely this term does not
contribute to the definition of the possible discontinuities of $w$. At a point of jump,
\eqref{withsource} just resumes to the classical Rankine-Hugoniot condition
\be
\label{rankinehug}
 -\sigma (w^+-w^-) + \sum_{i=1}^d \left(f_i(w^+,v)-f_i(w^-,v)\right)=0.
\ee
A selection criterion of the admissible weak solutions $w$ is of course needed, and we recast the balance law \eqref{withsource} in the main variable $u$:
\be
\label{etoile1}
 \dt  \cz(\u,\v) + \sum_{i=1}^d \partial_\u \ci(\u,\v) \dxi \u = 0
\ee
 for all smooth solutions. For such solutions, additional equations are deduced and based on any (strictly) convex function $\varpi\mapsto\U(\varpi)$, by multiplying \eqref{etoile1}
by $\U'(\cz(\u,\v))$, 
\be
\label{etoile2}
\dt \U(\cz(\u,\v)) + \sum_{i=1}^d \partial_u \Q_i(\u,\v)\partial_{x_i} u = 0,
\ee
where
\be
\label{entropyflux}
\Q_i(\u,\v) = \int^\u \U'(\cz(\theta,\v))\partial_\theta \ci(\theta,\v) d\theta, \quad 1\le i \le d.
\ee
We thus get from \eqref{etoile2} the equivalent form for smooth solutions $u$:
\be
\label{etoile4}
\dt \U(\cz(\u,\v)) + \sum_{i=1}^d \dxi \Q_i(\u,\v) = \sum_{i=1}^d\sum_{l=1}^L \partial_{v_l} \Q_i(\u,\v)
\dxi \v_l.
\ee
Observe that the above right--hand side is nothing but a classical source term since we again emphasize
that the color function $v$ is smooth. As a consequence, the weak form of \eqref{etoile4} for
discontinuous solutions $u$ reads:
\be
\label{entropyu}
\dt \U(\cz(\u,\v)) + \sum_{i=1}^d \dxi \Q_i(\u,\v) \le \sum_{i=1}^d\sum_{l=1}^L \partial_{v_l} \Q_i(\u,\v)
\dxi \v_l,
\ee
which naturally plays the role of an (inhomogenous) entropy inequality for selecting the relevant weak
solutions. Hereafter, we shall make use of the inequalities \eqref{entropyu} for all convex entropy $\U$.
These will be alternatively invoked (essentially when the color function is locally constant) in the $w$
variable:
\be
\label{etoile6}
\dt \U(\w) + \sum_{i=1}^d \dxi \F_i(\w,\v) - \sum_{i=1}^d\sum_{l=1}^L  \Lil(\w,\v) \dxi \v_l  \le 0,
\ee
with
\be
\label{entropyfluxw}
\quad \F_i(\w,\v) = \Q_i(\u(\w,\v),\v), \qquad \L_i(\w,\v) = \partial_v {\Q_i}_{|_u}(\u(\w,\v),\v), 
\qquad 1\le
i\le d.
\ee
To shorten the notation, equation \eqref{etoile6} are written as 
\be
\label{entropyw}
\dt \U(\w) + \nabla \cdot \F(\w,\v) -  \L(\w,\v) : \nabla \v  \le 0.
\ee
The inhomogeneous scalar conservation law \eqref{withsource} supplemented with all the entropy
inequalities \eqref{etoile6} naturally falls within Kruzkov 's theory of entropy
solutions, since the color function $v$ belongs to $W^{2,\infty}(\R^d,\mathbb{B}^{L}_+)$. 
Therefore, Kruzkov's uniqueness theorem for scalar conservation law with smooth inhomogeneities applies and asserts the uniqueness of the entropy weak solution of the Cauchy problem
\eqref{withsource}-\eqref{etoile6} with initial data $w_0\in L^1(\R^d)\cap L^\infty(\R^d)$.

Hereafter, we shall prove existence and uniqueness of a solution to the coupled problem
\eqref{withsource}-\eqref{etoile6} thanks to a multidimensional well-balanced finite volume method
formulated on general triangulations. Here, the well-balanced property means that the solutions in the $u$
variable is kept constant in time and space as soon as the initial data $u_0$ is chosen constant
whatever the definition of the (smoothly varying in space) color function $v$. This well-balanced
property is obviously a constancy property of primary importance.


\section{A well-balanced finite volume scheme for coupling problems}

\subsection{Terminology and assumptions}

Before stating our main result, we introduce some notation and motivate our formulation of the
finite volume method under consideration. To meet the well--balancing property, the finite volume framework we develop uses 
two
families of triangulations. The first triangulation, denoted by $\Th$, is made of general polyhedra and
will be referred to as the primal mesh. Then a closely related triangulation is of concern, the 
dual mesh $\Thstar$, whose polyhedra are derived from the edges of the primal one. As we shall see, dual
meshes may not uniquely defined from $\Th$ and it will turn that a given choice essentially affects the
closed-form of expression of the CFL restriction in the  (time explicit) finite volume method.

Equipped with these primal and dual meshes, approximate solutions $\u_h$ and $\v_h$ of the Cauchy problem
(\ref{cauchyuv}) with initial data $(\u_0,\v_0)$, are sought as piecewise
constant functions. In constrast with the usual approach, constant values for $\uh$ and $\vh$ will
not be co--localized: $\uh$ (and $\vh$, respectively) will assume constant values in each polyhedron of the
primal mesh (and the dual mesh, resp.).

To facilitate the derivation of the proposed well-balanced scheme, we shall take advantage of the
regularity of the color function $\v$, which provides some room for the specific definition of
the discrete approximation $\vh$: it may range from a local averaged form to a point-wise evaluation.
Without real loss of generality, we use an average value of $\v$ along
each edge of the primal mesh. This choice allows to bypass the definition
of the dual mesh from the edges of the primal one: a convex sequence of reals, in turn, provide 
sufficient information on the dual mesh. On the ground of this observation, we shall give a first brief but
sustained mathematical presentation of the finite volume method under consideration. We shall then be in a
position to state the main result of this paper. At last, we shall close this section with a comprehensive
construction of the proposed finite volume approximation when deriving dual meshes from the primal one.

The primal mesh, $\Th$, is a general (locally finite) triangulation of $\R^d$ made of 
non--overlapping, non--empty, and open polyhedra : $\cup_{K\in\Th}\overline{K} = \R^d$. We assume that for
every pair of distinct polyhedra $K,K'\in\Th$ the set $K\cap K'$ is either an edge $e$ of both $K$ and
$K'$ or a set with Haussdorf dimension less
than or equal to $d-2$. The set of edges of a polyhedron $K$ is denoted by $\partial K$; and for each
$e\in\partial K$, $\nuek\in\R^d$ represents the outward unit normal vector to the edge $e$ (see
Figure~\ref{figMesh}). The volume of $K$ and the $(d-1)$-measure of $e$ are
denoted $\volk$ and $\vole$, respectively. Given an edge $e$ in $K$, $K_e$ denotes the unique polyhedron
in $\Th$ that
 shares the same edge $e$ with $K$. We set 
$
h = \sup_{K\in\Th} h_K,
$  
where $h_K$ is the exterior perimeter of the polyhedron $K$, and 
assume that the triangulation $\Th$ satisfies the following non degeneracy condition 
\be
\label{nondegenere}
\sup_K \frac{h_K \, \pk}{\volk} \leq C
\ee
for some constant $C>0$. Here, $\pk$ denotes the perimeter of $K$ defined by 
$\pk = \sum_{e\in \partial K} \vole$. 

It is unnecessary, at this stage,  to provide a comprehensive derivation of the {\sl dual mesh} $\Thstar$
that one could define  
from the edges $e$ in the primal mesh $\Th$. Recall that, by design, a dual mesh is made of 
non--overlapping, non--empty, and open polyhedra denoted by $\ks(e)$ with $\displaystyle \cup_{e\in \Th}
\ks(e) = \R^d$.
By construction, both sets $\ks(e)\cap K$ and $\ks(e)\cap K_e$ are non--empty for all pair $(K,K_e)$ of
adjacent polyhedra parametrized by the edges $e$ in $\Th$. Note that the set $\ks(e)\cap K$ is a subcell
of $K$. Then, the only information
about $\Thstar$ that is required in this section is
a given convex sequence of reals prescribed in each polyhedron $K$ in $\Th$; 
we denote by $\{\ake\}_{\{e,e\in \partial K\}}$, that satisfies (for any $K$ in $\Th$)
\be
\label{defake}
0 < \ake < 1 \qquad (e\in\partial K), \qquad \quad \sumek \ake = 1.
\ee
We will see later that the coefficient $\ake$ is nothing but the ratio of the volume of $\ks(e)\cap K$ 
to the volume of $K$, where $\ks(e)$ stands for the dual polyhedron of $K$ attached to any edge $e$ in
$\partial K$:
\be
\label{intake}
\ake = \frac{\vert \ks(e)\cap K\vert}{\volk},\qquad e\in\partial K.
\ee
At last, the time increment, denoted by $\tau$, is assumed to satisfy
$\frac{\tau}{h} \leq C$
and the primal mesh to be constrained by
\be
\label{ehnondeg}
C_1 \leq \frac{|e|}{h} \leq C_2
\ee
for some constants $C, C_1, C_2>0$. Whereas the latter is probably not an optimal condition, it
sufficies to ensure the non degeneracy of the mesh: all one-dimensional characteristic lengths are of
order $h$. A key property for the forthcoming CFL condition, is that under these assumptions the area
$\vert\ks(e)\cap K\vert$ is not smaller than $O(h^2)$: there exists a positive constant $c$ such
that
\be
\label{dualnondegen}
 ch^2 \le \vert\ks(e)\cap K\vert.
\ee

We use the notation $t^n = n\tau$. As already underlined, we will seek at each time level $t^n$
approximate solutions $\u_h$ and $\v_h$ of the Cauchy problem (\ref{cauchyuv}) with initial data
$(\u_0,\v_0)$, under the form of piecewise constant functions with:
\be
\left.
\begin{array}{lll}
&\uh(x,t^n) = u^n_K, \quad x\in K,\quad K\in\Th, \\~\\
&\vh(x,t^n) = \vh(x) = v_e, \quad x\in \ks(e), \quad e\in\Th.
\end{array}
\right.
\ee
Here and since the solution $\v$ in the Cauchy problem (\ref{cauchyuv}) does not depend on time, it seems
natural to set $\vh(x,t^n) = \v(x) = \vh^0(x)\in\R^L$ for all time level $t^n$, for some discrete
approximation $\vh^0$ of the smooth function $\v_0$. We introduce 
\be
\label{defve}
\vh(x) = \v_e =  \displaystyle~\frac{1}{\vole} \int_e \v_0(y) dy, \quad x\in \ks(e), \quad e\in \Th,\ee
while the discrete version of the possibly discontinuous initial data $\u_0$ is chosen according to the
usual full averaging procedure over each polyhedron $K$:
\be
\label{defukinit}
\u_h^0(x) = \u^0_K = \displaystyle \frac{1}{\volk}\int_K \u_0(y) dy, \quad x\in K, \quad K\in\Th.
\ee

\begin{remark} 
\label{gradve}
Since $v_0$ is regular, any other consistent definition for the constant value $\v_e$
in ${\ks}(e)$ would have been relevant. The interest in the particular choice (\ref{defve}) stems from the
following Green formula, valid for each polygonal domain $K$:
\begin{equation*}
X \, \sumek \ve_l \nuek \vole = \int_K \nabla \cdot (v_l(x) \, X) dx = X \, \int_K \nabla
v_l(x) dx,
\end{equation*}
where $X$ denotes any fixed vector in $\R^d$ and $\ve_l$ (and $v_l$, respectively) the $l$-th component
of the vector $\ve\in\R^L$ (and $v$, resp.).
Hence the proposed average value in (\ref{defve}) comes with the identity:
$
\int_K \nabla v_l(x) dx =\sumek \ve_l \nuek \vole.
$
In a tensorial notation, we thus get $
\int_K \nabla v(x) dx =\sumek \ve \otimes \nuek \vole.
$
\end{remark}

The evolution in time of the discrete solution $\uh$ will rely on a family of  
{\sl numerical flux-functions,} associated with each edge $e$ of any polyhedron $K$ in $\Th$.
Besides other properties, these numerical flux functions must meet some consistency property with the
exact equation for governing $\u$ in (\ref{cauchywv}), namely:
\be
\label{equk}
\dt \w(\u,\v)+ \nabla \cdot \flux(\w(\u,\v),\v) - \ll(\w(\u,\v),\v) : \nabla \v = 0, \qquad x\in K, \quad
t\in (t^n,t^{n+1}).
\ee
Observe that in the neighborhood $\ks(e)$ of each edge $e$, where $\vh$ reduces to a
constant value $\v_e$, the above equation boils down to the scalar equation in the unknown $\w =
\w(\u,\ve)$:
\be
\label{eque}
\dt \w+  \nabla \cdot \flux(\w,\v_e) = 0, \qquad x\in \ks(e)\cap K, \quad t\in (t^n,t^{n+1}).
\ee
This in turn leads us to define the required numerical flux function at each edge $e$ in $\Th$ as a
locally Lipschitz continuous two-point flux-function $\gek(.,.;\ve) : \R\times\R \to \R$ that satisfies
the consistency property:
\be
\label{gekconsi}
\gek(\w,\w;\ve) = \flux(\w,\ve)\cdot\nuek,
\ee
the conservation property:
\be
\label{gekconse}
\gek(\w,\w_e;\ve) = - g_{e,K_e}(\w_e,\w;\ve), 
\ee
for all reals $\w$ and $\w_e$, and the monotonicity property 
\be
\displaystyle
\label{gekmonot}
\frac{\partial g(\w,\w_e;\ve)}{\partial w} \ge 0, \qquad \frac{\partial g(\w,\w_e;\ve)}{\partial \w_e} \le
0.
\ee
In addition, we assume that the numerical flux depend (locally) Lipschitz continuously in the variable
$v_e$.

Standard $3$--point monotone schemes in the scalar framework obey (\ref{gekconsi})--(\ref{gekmonot}) and
 that the main results in this paper are easily extended to all E-schemes (Osher \cite{Osher84}). 
For clarity,   the dependence in the parameter $\ve$ appears explicitly in the numerical flux-function $\gek(.,.;\ve)$.

\begin{remark}
\label{lipgek}
Since the function $g(.,.;.) : \R^3\to \R$ is locally Lipschitz continuous in its three
arguments, for all compact ${\cal K}\subset \R^3$, there exists some positive constant ${\cal C}_{\cal K}$
such that for all triple $(\w^{(1)},\w^{(1)}_e,\v^{(1)}_e)$ and $(\w^{(2)},\w^{(2)}_e,\v^{(2)}_e)$ in
${\cal K}$, the following estimate holds true:
$$
\left.
\begin{aligned}
&\vert \gek(\w^{(2)},\w^{(2)}_e;\v^{(2)}_e)-~\gek(\w^{(1)},\w^{(1)}_e;\v^{(1)}_e)\vert  
 \le {\cal C}_{\cal K}
\, \big(\vert \w^{(2)}-\w^{(1)}\vert+\vert \w^{(2)}_e-\w^{(1)}_e\vert+\vert
\v^{(2)}_e-\v^{(1)}_e\vert
\big).
\end{aligned}
\right.
$$
\end{remark} 


\subsection{Definition of the well-balanced scheme}

We are now in a position to define the finite volume approximation of (\ref{equk}).
Assuming that the approximate solution $\uh(.,t^n)$ is known at time $t^n$, we determine the 
evolution up to the next time level $t^{n+1}$ as follows: 

\begin{description}
\item {\it Subcell reconstruction.} At each time $t^n$ in each polyhedron $K$ of $\Th$, we consider any edge $e \in \partial K$ and introduce the subcell state 
\be
\label{defwkve}
\wkve = \cz(\uk, \ve), \quad e\in \partial K,
\ee
as well the following average over all edges of $K$ 
\be
\label{defwk}
\displaystyle
\wk = \sumek \ake \wkve.
\ee

\item {\it Evolution in time}. In order to the discrete solution $\uh$ at time $t^{n+1}$, we define (in each polyhedron $K$) $ \uuk$ to be the
unique solution of 
\be
\label{defuuk}
\displaystyle
 \sumek \ake ~\cz(\uuk,\ve) = \wwk,
\ee
where the state $\wwk$ is given by the finite volume scheme 
\be
\label{defwwk}
\wwk = \wk - \frac{\tau}{\volk}\sumek \gek(\wkve,\wkeve;\ve)\vole + \frac{\tau}{\volk}\sumek
\flux(\wkve,\ve)\cdot \nuek \vole.
\ee
\end{description}

This completes the description of our numerical method. The proposed finite volume method is explicit in time and, for the sake of stability, we need to impose a CFL (Courant, Friedrichs, Lewy) condition which reads, for all polyhedra $K$ in $\Th$ and edges $e\in\partial K$, 
\be
\label{CFL}
\frac{\tau}{\volk}~\frac{\vole}{\ake} \sup_{\u\in [m,M]}\Big\vert \frac{\partial
\flux(\w(\u,\ve),\ve)}{\partial \w}\Big\vert \le 1,
\ee
where $\displaystyle m =\inf_{x\in\R^d} \u_0(x)$ and $\displaystyle M=\sup_{x\in\R^d} \u_0(x)$.

Due to the dimensional hypothesis \eqref{defake}-\eqref{ehnondeg}-\eqref{dualnondegen} the ratio $\volk\ake/\vole$ satisfies
\begin{equation*}
 \volk\frac{\ake}{\vole} = \frac{\vert\ks(e)\cap K\vert}{\vole} \ge \frac{c}{C_2}h,
\end{equation*}
so that the CFL condition can not imply the degeneracy of the time step $\tau$, that decreases at most as
$O(h)$. We will see in Section~\ref{FVSCHEME} how to build suitable primal and dual meshes.

Several comments are in order. First observe that the constitutive assumptions
(\ref{smoothC})--(\ref{monotC0}) on the coupling function $\cz(.,.)$ immediately yields existence and
uniqueness of a solution to the nonlinear equation (\ref{defwkve}) so that the finite volume method
(\ref{defwkve})--(\ref{defwwk}) is well defined. The formulas (\ref{defwkve}) and (\ref{defuuk})
obviously express the same identity at the times $t^n$ and $t^{n+1}$, and are redundant: the finite volume method essentially reduces to
(\ref{defuuk})--(\ref{defwwk}). As they stand, they nevertheless ease the description of the method.

Next, it is worth observing that the consistency condition (\ref{gekconsi}) allows in (\ref{defwwk}) to
recast the flux balance $\sumek \flux(\wkve,\ve)\nuek\vole$ as $\sumek\gek(\wkve,\wkve;\ve)\vole$. Here we
stress that at each edge $e$ in $\partial K$, both the numerical flux-function $\gek(\wkve,\wkeve;\ve)$
and its counterpart $\flux(\wkve,\ve)\cdot \nuek$ are evaluated thanks to the subcell values $\wkve$
(\ref{defwkve}) and not to their averaged form $\wk$ in (\ref{defwk}). The motivation is twofold. In a
first hand, the two flux balances involved in (\ref{defwwk}), namely $\sumek \gek \vole$ and $\sumek
\flux(\wkve,\ve)\cdot\nuek \vole$, make the proposed formula to be a consistent finite volume
approximation of the exact equation (\ref{equk}) for governing $\u$: namely, the first one will be seen
hereafter to be consistent with $\nabla \cdot \flux(\w,\v)$ while the second one actually provides a
consistent approximation of the source term $\ll(w,v) : \nabla v$. In a second hand, the 
discretization of the source term is seen to be well--balanced.

\begin{proposition}[Well-balanced property] 
\label{lemwb}
When the initial data $\u_0$ for (\ref{equk}) is a constant function $\u_0(x) = \u^\star (x\in\R^d)$,  
then, for any choice of the color function $\v$ in (\ref{equk}), the
discrete solution $\uh$ of (\ref{defwkve})--(\ref{defwwk}) is also constant, with 
\be
\uh(x,t^n) = \u_0(x) = \u^\star, \quad x\in\R^d
\ee
for all time level $t^n$. 
\end{proposition}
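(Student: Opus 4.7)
The plan is to proceed by induction on the time level $n$. The base case $n=0$ is immediate from the averaging definition (\ref{defukinit}): if $u_0 \equiv u^\star$ then $u^0_K = u^\star$ for every $K \in \Th$. So assume $\uk = u^\star$ for all $K$, and show that $\uuk = u^\star$ for all $K$.

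First, I would compute the subcell reconstructions. By (\ref{defwkve}), $\wkve = \cz(u^\star, \ve)$ and, crucially, $\wkeve = \cz(u^\star, \ve)$ as well, since both subcell states on either side of the edge $e$ are computed using the \emph{same} edge-based color value $\ve$. Thus $\wkve = \wkeve$ across every edge. Plugging into the numerical flux and invoking the consistency property (\ref{gekconsi}), we get
\[
\gek(\wkve,\wkeve;\ve) = \gek(\wkve,\wkve;\ve) = \flux(\wkve,\ve)\cdot \nuek,
\]
for every $e\in\partial K$. Substituting into (\ref{defwwk}), the two edge sums cancel exactly, leaving
\[
\wwk = \wk = \sumek \ake \,\cz(u^\star,\ve).
\]

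Next, I would use (\ref{defuuk}) to extract $\uuk$. The equation
\[
\sumek \ake \,\cz(\uuk,\ve) = \sumek \ake \,\cz(u^\star,\ve)
\]
has a unique solution in $\uuk$: by the monotonicity assumption (\ref{monotC0}), each map $u \mapsto \cz(u,\ve)$ is strictly increasing, and since $\ake > 0$ with $\sumek \ake = 1$, the convex combination $u \mapsto \sumek \ake \cz(u,\ve)$ is also strictly increasing, hence injective. Therefore $\uuk = u^\star$, which closes the induction.

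The key observation — and the only place where the specific structure of the scheme matters — is that the \emph{subcell reconstruction} (\ref{defwkve}) uses the edge value $\ve$ symmetrically on both sides of $e$, so that $\wkve$ and $\wkeve$ coincide whenever $\uk = \uke$. Without this symmetry (for instance, if one used a cell-centered $\vk$ instead of an edge-based $\ve$), the consistency identity (\ref{gekconsi}) would not apply and the discretization of the source term in (\ref{defwwk}) would fail to cancel the numerical flux. There is no real obstacle to the proof; the work has already been done in the design of the scheme, and the well-balanced property is essentially built in by construction.
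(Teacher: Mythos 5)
Your proof is correct and follows essentially the same route as the paper's: both arguments observe that $\wkve=\wkeve=\cz(\u^\star,\ve)$ at a constant state, invoke the consistency property (\ref{gekconsi}) to cancel the two flux balances in (\ref{defwwk}), and then use the strict monotonicity of $u\mapsto\cz(u,\ve)$ (hence of its convex combination) to conclude $\uuk=\u^\star$ from (\ref{defuuk}), propagating by induction in $n$. Your explicit justification of the uniqueness step and the closing remark on the role of the edge-based color value are welcome elaborations of what the paper states more tersely.
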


In other words, the finite volume method (\ref{defwkve})--(\ref{defwwk}) is well-balanced with respect to
all the natural equilibria of (\ref{equk}).

\begin{proof}
The discrete initial data (\ref{defukinit}) clearly reads $\u^0_h(x)= \u^\star$ for all $x$ in $\R^d$ so
that at the first subcell reconstruction step, we get $\w^0_{K,e} = \cz(\u^\star,\ve) = \w^0_{K_e,e}$ for
any edge $e$ of an arbitrary polyhedron $K$ in $\Th$. Consequently, the numerical flux
$\gek(\w^0_{K,e},\w^0_{K_e,e};\ve)$ at any edge $e$ boils down to $\flux(\w^0_{K,e},\ve)\cdot \nuek$
in view of the consistency condition (\ref{gekconsi}). Namely the two flux balances in the updating
formula (\ref{defwwk}) cancel out and we end up with $\w^1_K = \w^0_K = \sumek \ake \cz(\u^\star,\ve)$
thanks to the definition (\ref{defwk}). Arguing about uniqueness, we thus get when solving (\ref{defuuk})
$\u^1_K = \u^\star$ for any polyhedron $K$ of $\Th$: namely $\uh(x,t^1)=\u^\star$ for all $x$ in
$\R^d$. An immediate recursion extends the result to the subsequent time levels.
\end{proof}

To conclude this paragraph, it is worth illustrating that the last flux-balance entering the finite volume
approximation (\ref{defwwk}) actually provides a consistent approximation of the source term $\ll(w,v):
\nabla v$. For the sake of simplicity, we temporarily adopt (cf.~Remark~\ref{example2}):
$$
\left.
\begin{array}{lll}
\cz(\u,\v) =  (1-v) \gamma_-(\u) + v \gamma_+(\u), \\
\ci(\u,\v) = (1-v) a^-(\gamma_-(\u)) + v a^+(\gamma_+(\u)), \quad 1 \le i \le d,
\end{array}
\right.
$$
so that $\flux(w,v)$ and $\ll(w,v)$ in (\ref{equk})  read
$
\flux(\w(\u,\v),\v) =  (1-v) A^-(\gamma_-(\u)) + v A^+(\gamma_+(\u)),
$
and
$
\ll(\w(\u,\v),v)= \Big(A^+(\gamma_+(\u)) - A^-(\gamma_-(\u))\Big).
$
It can be then readily computed:
$$
\left.
\begin{aligned}
\displaystyle
& \sumek \flux(\wkve,\ve)\cdot \nuek \vole 
\\
& =  
\displaystyle
\sumek \Big((1-\ve)A^-(\gamma_-(\uk)) + \ve A^+(\gamma_+(\uk))\Big) \cdot \nuek \vole
\\
& = 
\displaystyle
\Big(A^+(\gamma_+(\uk)) - A^-(\gamma_-(\uk))\Big) \cdot \sumek  \ve \vole \nuek
+ A^-(\gamma_-(\uk))\cdot \Big(\sumek  \vole \nuek\Big)
\end{aligned}
\right.
$$
which is nothing else a consistent discretization of $\ll(\w(\u,\v)): \nabla v$, in view of the
representation formula in Remark~\ref{gradve} (for $\nabla v$) and the identity $\displaystyle \sumek
\vole\nuek= 0$.

These straightforward calculations allows to bridge the finite volume formula (\ref{defwwk}) to the
governing equation (\ref{equk}) for $\w(\u,\v)$, expressed over $K$, namely where $\vh$ does achieve
distinct values. The gap in between (\ref{equk}) and its reduced version (\ref{eque}) ({\it i.e.} with
$x\in \ks(e)\cap K$) will be definitely closed when revisiting the finite volume approximation
(\ref{defwkve})--(\ref{defwwk}) with primal--dual meshes (in Section \ref{FVSCHEME}).


\subsection{Main convergence result}

We are now in a position to state the main result of this paper. 

\begin{theorem}[Well--balanced finite volume method for multi--dimensional coupling problems] 
\label{maintheo}
Consider the Cauchy problem (\ref{cauchyuv})--(\ref{entropyu}) with initial data $\u_0\in L^\infty(\R^d)$
and $\v_0\in W^{2,\infty}(\R^d)$
under the constitutive assumptions (\ref{smoothC})--(\ref{monotC0}).
Let $\uh$ be the sequence of approximate solutions defined by the finite volume method
(\ref{defve})--(\ref{defukinit}) and (\ref{defwkve})--(\ref{defwwk}) with numerical flux-functions satisfying
the conditions (\ref{gekconsi})--(\ref{gekmonot}). 
Then under the CFL restriction (\ref{CFL}), the sequence $\uh$ is uniformly bounded in
$L^\infty(\R_+\times \R^d)$ and converges (when $h\to 0$) in the
$L^p_{loc}$ norm strongly ($1\le p < \infty$) to the unique entropy
solution $\u$ to the problem (\ref{cauchyuv})--(\ref{entropyu}): namely for all time $T>0$ and for all
compact ${\cal K}$ in $\R^d$
$$
\lim_{h \to 0} 
\vert\vert \u - \uh\vert\vert_{L^p((0,T)\times{\cal K})} =0.
$$ 
\end{theorem}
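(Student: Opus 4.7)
The strategy mirrors the Coquel--Cockburn--LeFloch programme, adapted to the augmented system with a smooth color map $v$: derive a sup-norm bound, discrete entropy inequalities, and a weak entropy-dissipation estimate, and then pass to the limit via DiPerna's measure-valued solutions.

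\textbf{Step 1: $L^\infty$ bound.} The plan is to rewrite the update
$\wwk = \wk - \tfrac{\tau}{|K|}\sum_{e\in\partial K}\bigl(\gek(\wkve,\wkeve;\ve)-\flux(\wkve,\ve)\cdot\nuek\bigr)|e|$
cell by subcell, using $\wk=\sum_e\ake \wkve$ and the consistency $\flux(w,\ve)\cdot\nuek = g_{e,K}(w,w;\ve)$. This yields, for each edge $e$,
$$\ake\,\wkve^{n+1,\sharp} = \ake \wkve - \tfrac{\tau}{|K|}\bigl(\gek(\wkve,\wkeve;\ve)-\gek(\wkve,\wkve;\ve)\bigr)|e|,$$
and hence $\wwk=\sum_e \ake\,\wkve^{n+1,\sharp}$. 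Under the CFL condition \eqref{CFL} and the monotonicity \eqref{gekmonot}, each $\wkve^{n+1,\sharp}$ is a nondecreasing function of $(\wkve,\wkeve)$ on the range of the data. Inverting $u\mapsto\C_0(u,\ve)$ (which is strictly increasing by \eqref{monotC0}) and invoking the well-balanced Proposition~\ref{lemwb} applied to the constants $u^\star=m$ and $u^\star=M$ by comparison, one obtains $m\le \uuk \le M$, so $\|u_h\|_\infty \le \|u_0\|_\infty$.

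\textbf{Step 2: Discrete entropy inequalities.} For every convex $\U$, I would introduce a numerical entropy flux $\Gek(\cdot,\cdot;\ve)$ consistent with $\Q_i(u,\ve)\nu_{K,e,i}$ (hence with $\F_i(w,\ve)\nu_{K,e,i}$ via the change of variable) and conservative. Using the standard monotone-scheme Crandall--Majda trick on each subcell update $\ake\wkve\mapsto \ake\wkve^{n+1,\sharp}$, then summing the convex combination $\U(\wwk)\le \sum_e \ake \U(\wkve^{n+1,\sharp})$ by Jensen, one obtains
$$|K|\,\U(\wwk) - |K|\sum_e\ake\U(\wkve) + \tau\!\!\sum_{e\in\partial K}\!\!\bigl(\Gek(\wkve,\wkeve;\ve)-\F(\wkve,\ve)\cdot\nuek\bigr)|e|\le 0.$$
The flux-difference term is precisely the well-balanced discretization of $\nabla\cdot\F(w,v)-\L(w,v):\nabla v$ as verified at the end of Section~3.2, so this is the consistent discrete form of \eqref{entropyw}.

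\textbf{Step 3: Weak entropy-dissipation estimate.} This is the core difficulty. Choosing $\U(w)=w^2/2$ and telescoping the cell identity against the finite volume update produces, after summation over $n$ and $K$ and use of the dissipation inherent in the monotone numerical entropy flux, a bound of the type
$$\sum_{n,K,e}\tau|e|\,h\,(\wkve-\wkeve)^2 \le C(T),$$
with constants independent of $h$ (the factor $h$ coming from $|K|\ake/|e|\gtrsim h$ by \eqref{dualnondegen} and \eqref{ehnondeg}). This weak-BV bound, together with the Lipschitz dependence on $\ve$ (Remark~\ref{lipgek}) and the smoothness of $v_0$, is what one feeds into the consistency of the scheme against smooth test functions: the defect between the exact entropy inequality \eqref{entropyu} applied to $u_h$ and its discrete version is controlled by a Cauchy--Schwarz argument using this estimate plus standard truncation errors for smooth pieces.

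\textbf{Step 4: Passage to the limit via measure-valued solutions.} With $\|u_h\|_\infty\le M$ and the weak entropy consistency of Step~3, extract a subsequence generating a Young measure $(\nu_{t,x})$ on $[-M,M]$. Passing to the limit in the discrete entropy inequalities against any nonnegative test function yields, for every convex $\U$,
$$\partial_t\langle\nu,\U\circ\C_0(\cdot,v)\rangle + \nabla\cdot\langle\nu,\Q(\cdot,v)\rangle - \langle\nu,\partial_v\Q(\cdot,v)\rangle\cdot\nabla v \le 0$$
in $\calD'$, with initial trace $\nu_{0,x}=\delta_{u_0(x)}$ (standard consistency of the cell-average initialization \eqref{defukinit}). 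Because $v\in W^{2,\infty}$, the reduced equation \eqref{withsource} is a Kruzkov-type scalar law with Lipschitz inhomogeneity, so DiPerna's uniqueness theorem (extended to inhomogeneous fluxes as in Szepessy/Coquel--LeFloch) forces $\nu_{t,x}=\delta_{u(t,x)}$ where $u$ is the unique entropy solution. Reduction to a Dirac mass upgrades the weak-$\star$ convergence to strong $L^p_{loc}$ convergence for all $p<\infty$, and the uniqueness of the limit removes the subsequence.

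\textbf{Main obstacle.} I expect Step~3 to be the delicate point: the subcell reconstruction means that $\wkve\neq \wkeve$ in general even for a ``smooth'' cell, and the natural quadratic dissipation produced by the monotone scheme lives on the subcell values, not on the cell averages $\wk$. Reconciling the two, and extracting a bound whose $h$-scaling is compatible with the non-degeneracy bounds \eqref{ehnondeg}--\eqref{dualnondegen} so that it actually controls the entropy defect as $h\to 0$, is the crux of the argument. The well-balanced structure is essential here: it is what guarantees that the ``source-like'' flux balance $\sum_e\flux(\wkve,\ve)\cdot\nuek|e|$ does not pollute the entropy dissipation with terms of order $O(1)$.
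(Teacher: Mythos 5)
Your overall architecture (sup--norm bound, discrete entropy inequalities, a weak dissipation estimate, then DiPerna's measure--valued framework) is exactly the paper's, and Steps 1, 2 and 4 are essentially sound. In particular your Step 1, which combines the subcell convex decomposition \eqref{convwwk} with the monotonicity of the full map $(\uk,(\uke)_{e})\mapsto\uuk$ and comparison with the preserved constants $m$ and $M$, is a legitimate shortcut to the global $L^\infty$ bound; the paper instead proves the sharper local maximum principle \eqref{princmax} by the recursion of Lemmas \ref{mpstart}--\ref{mppropagate}, which is needed because $\uuk$ is only defined implicitly through $\sumek\ake\cz(\uuk,\ve)=\wwk$ with edge--dependent color values.

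The genuine gap is Step 3, which you rightly identify as the crux but resolve with the wrong estimate. First, as written the bound $\sum_{n,K,e}\tau\vole\, h\,(\wkve-\wkeve)^2\le C(T)$ is implied outright by the sup--norm bound (on any compact space--time region there are $O(\tau^{-1}h^{-d})$ terms, each weighted by $\tau\vole h\sim h^{d+1}$), so it carries no information. The version without the extra factor $h$ would suffice, but that is the strict weak--BV estimate, which for merely monotone fluxes satisfying \eqref{gekmonot} is not available: the flux dissipation $\vert\gek(a,b;\ve)-\gek(a,a;\ve)\vert\,\vert a-b\vert$ degenerates relative to $(a-b)^2$ near sonic points, and avoiding this obstruction is precisely why the Coquel--LeFloch programme works with entropy dissipation rates instead. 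Second, the quantity the limit passage actually requires is not the inter--cell jump but the within--cell oscillation of the updated subcell values: after merging the subcell entropy inequalities so that only exact entropy fluxes survive (Lemma \ref{lemdwis}), the remaining obstruction is the error term in the time--derivative decomposition \eqref{tddecomp}, and what controls it is $\sum_{n,K,e}\ake\vert\wwk-\wwkve\vert^2\volk=O(1)$ (Proposition \ref{propweakesti}). That bound does not come from the numerical flux at all; it comes from the strict convexity of $\U$, i.e.\ from retaining the quadratic Jensen defect $\U(\wwk)-\sumek\ake\U(\wwkve)\le-\sigma_\U\sumek\ake\vert\wwk-\wwkve\vert^2$ in the convex recombination \eqref{convwwk} and telescoping $\sumkT(\U(\wwk)-\U(\wk))\psi_K\volk$ over time. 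Note that your Step 2 discards exactly this defect by invoking Jensen as an inequality and throwing the slack away; you must keep it, since it is the only source of the $O(1)$ dissipation bound that turns the final Cauchy--Schwarz step into the $O(h^{1/2})$ entropy residual of Corollary \ref{corentine}.
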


The rest of this paper is devoted to a proof of this theorem.


\section{Finite volume approximations on primal-dual meshes} 
\label{FVSCHEME}

\subsection{A convex combination}

One of our objectives in this section is explaining how the coefficients $\alpha_{K,e}$ should be
determined. Arguing about the formula-definitions (\ref{defwkve})--(\ref{defwk}) at time $t^n$ and the
consistency condition (\ref{gekconsi}), we obtain the following statement.

\begin{lemma}[Edge values and convex combination] 
\label{lemwwkve}
For any  polyhedron $K$ of $\Th$ and edge $e$ in $\partial K$, let us define the following subcell
states:
\be
\label{defwwkve}
\wwkve = \wkve - \frac{\vole}{\ake}\frac{\tau }{\volk}\Big(\gek(\wkve,\wkeve;\ve) -
\gek(\wkve,\wkve;\ve)\Big).
\ee
Then $\wwk$ in (\ref{defwwk}) are recovered by the following averaging procedure:
\be
\label{convwwk}
\wwk = \sumek \ake \wwkve.
\ee
\end{lemma}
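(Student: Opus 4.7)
The statement is essentially an algebraic identity, and the plan is to unwind the definition of $\wwk$ given in \eqref{defwwk} and match it term-by-term against $\sumek \ake \wwkve$. The key observation that makes this work is the consistency condition \eqref{gekconsi}, which lets me identify the physical flux contribution with a diagonal evaluation of the numerical flux.

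First, I would apply \eqref{gekconsi} to rewrite $\flux(\wkve,\ve)\cdot \nuek = \gek(\wkve,\wkve;\ve)$, so that the two flux balances in \eqref{defwwk} can be merged into a single sum:
\begin{equation*}
\wwk = \wk - \frac{\tau}{\volk}\sumek \Big(\gek(\wkve,\wkeve;\ve) - \gek(\wkve,\wkve;\ve)\Big)\vole.
\end{equation*}
Next, I would substitute the definition $\wk = \sumek \ake \wkve$ from \eqref{defwk} into the leading term. At this point the right-hand side is a sum of two expressions indexed by $e\in\partial K$, but the weights differ: $\ake$ in the first and $\vole$ in the second.

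To reconcile the two, I would use the trivial factorization $\vole = \ake \cdot (\vole/\ake)$ inside the second sum, which is licit since the $\ake$ are strictly positive by \eqref{defake}. Factoring $\ake$ across both sums then yields
\begin{equation*}
\wwk = \sumek \ake \left[\wkve - \frac{\vole}{\ake}\frac{\tau}{\volk}\Big(\gek(\wkve,\wkeve;\ve) - \gek(\wkve,\wkve;\ve)\Big)\right],
\end{equation*}
and the bracketed expression is precisely $\wwkve$ as defined by \eqref{defwwkve}. This gives \eqref{convwwk}.

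There is no real obstacle here; the proof is a one-paragraph verification. The only subtle point worth flagging is that the coefficients $\ake$ serve simultaneously as the convex weights in the reconstruction of $\wk$ and, after the division $\vole/\ake$, as the scaling that transforms the edge-local CFL ratio $\tau\vole/\volk$ into the subcell CFL ratio $\tau\vole/(\ake\volk)$ appearing in \eqref{CFL}. This dual role is what justifies calling $\wwkve$ a subcell update and will be essential later when one needs monotonicity of the subcell evolution to derive the sup-norm and entropy inequalities.
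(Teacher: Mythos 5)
Your proof is correct and is precisely the argument the paper intends: the lemma is justified there by the one-line remark that it follows from the definitions (\ref{defwkve})--(\ref{defwk}) together with the consistency condition (\ref{gekconsi}), which is exactly the rewriting $\flux(\wkve,\ve)\cdot\nuek=\gek(\wkve,\wkve;\ve)$ followed by the regrouping under the weights $\ake$ that you carry out. Your closing observation on the dual role of $\ake$ matches the paper's subsequent discussion of the subcell CFL condition (\ref{cflloc}).
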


Observe that the finite volume formula (\ref{defwwkve}) for $\wwkve$ is nothing but a consistent
approximation of the one dimensional conservation law:
$
\dt \w+  \nabla \cdot \flux(\w,\v_e) = 0. 
$
The reason for calling $\wwkve$ a subcell state will be explained in this paragraph and is 
at the core of the re-interpretation of the finite volume
formula (\ref{defwwk}) with primal--dual meshes.

To further proceed, let us underline that the identity (\ref{convwwk}) just expresses that $\wwk$ actually
is a convex decomposition of the subcell states $\wwkve$. When understood in their quasi-one
dimensional form (\ref{defwwkve}), the latter can be recognized as extensions to the present
inhomogenous setting of partial states entering similar convex decompositions that have proved well
suited in the analysis of homogeneous multidimensional finite volume methods
 \cite{CoquelLeFloch90,CoquelLeFloch93}. Indeed, the interest in such a convex
decomposition primary stems from the fact that many of the basic stability properties satisfied by the
scheme (\ref{defwwkve}) in one space variable are right away inherited in several space variables thanks
to convexity under some CFL restriction. Observe that the relevant CFL condition for 
(\ref{defwwkve}) reads 
\be
\label{cflloc}
\frac{\tau}{\volk}~\frac{\vole}{\ake} \Big\vert \frac{\gek(\wkve,\wkeve;\ve) -
\gek(\wkve,\wkve;\ve)}{\wkeve-\wkve}\Big\vert \le 1,
\ee
and hence the CFL restriction (\ref{CFL}).

At last and arguing about the definition (\ref{defwwkve}), the subcell reconstruction step (\ref{defwk})
at time $t^{n+1}$ and the formula (\ref{defuuk}), we deduce the (seemingly trivial) identities
\be
\label{consrec}
\sumek \ake \w^{n+1}_{K,e}=\wwk= \sumek \ake \wwkve.
\ee
In other words, all the steps involved in the method are locally conservative: this natural property will
play a central role in the forthcoming analysis.


\subsection{A reformulation of the scheme}

The derivation of a dual mesh $\Thstar$ from the edge of the primal one $\Th$ may be performed as follow.
For any (open) polyhedron $K$, the idea is to pick an internal node $x_K$ in $K$ which choice is
left arbitrary at this stage. Such a procedure is given below a systematic definition independent of the
mesh refinement $h$. Equipped with the node $x_K$, we define for any edge $e$ in $K$ the convex hull
of $e$ and $x_K$. The interior of this convex hull, we denote by ${\cal E}(x_K,e)$, yields a non--empty
open polyhedron made of $(d+1)$ edges. Observe that the following properties are met by construction: for
any pair of distinct edges $e,e'$ in $\partial K$ with $K$ an arbitrary polyhedron in $\Th$
\be
{\cal E}(x_K,e)\cap K = {\cal E}(x_K,e),\quad {\cal E}(x_K,e)\cap{\cal E}(x_K,e')=\emptyset, 
\ee 
while 
$
\sumek  {\cal E}(x_K,e) = K.
$
Then, the required definition of the polyhedron $\ks(e)$ of the dual mesh $\Thstar$, attached to any
edge $e$ in $\Th$ with adjacent polyhedron $K$ and $K_e$, follows from 
\be
\label{defkse}
\ks(e) = {\cal E}(x_K,e)\cup {\cal E}(x_{K_e},e).
\ee
We refer the reader to Figure \ref{figMesh} for an illustration.

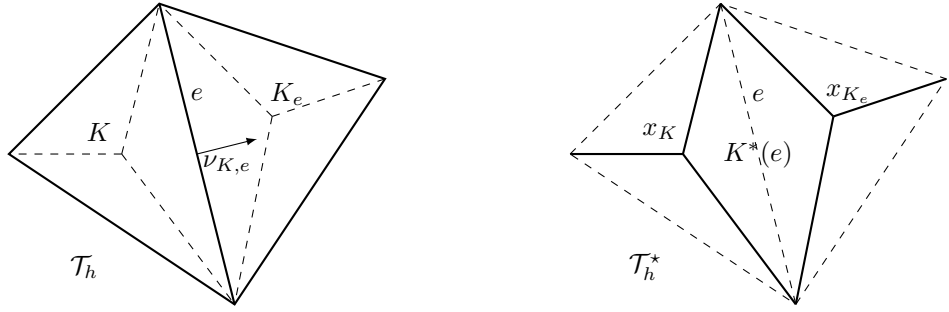
\begin{figure}[!ht]
\centering
\hfill
{
 \begin{tikzpicture}[scale=2.0,>=latex]
  \begin{scope}[thick]
  \draw (0,0) -- (1,1) -- (2.5,0.5) -- (1.5,-1) -- cycle;
  \draw (1,1) -- (1.5,-1) node[midway,above,inner sep=20pt] {$e$};
  \node at (0.60,0.15) {$K$};
  \node at (1.85,0.40) {$K_e$};
  \end{scope}
  \begin{scope}[dashed]
  \draw (0.75,0) -- (0,0);
  \draw (0.75,0) -- (1,1);
  \draw (0.75,0) -- (1.5,-1);
  \draw (1.75,0.25) -- (1,1);
  \draw (1.75,0.25) -- (1.5,-1);
  \draw (1.75,0.25) -- (2.5,0.5);
  \end{scope}
 \node at (0.5,-0.75) {$\Th$};
 \draw[->] (1.25,0) -- +(0.4,0.10) node[midway,below] {$\nuek$};
 \end{tikzpicture}%
}%
\hfill%
{
\begin{tikzpicture}[scale=2.0]
  \begin{scope}[dashed]
  \draw (0,0) -- (1,1) -- (2.5,0.5) -- (1.5,-1) -- cycle;
  \draw (1,1) -- (1.5,-1) node[midway,above,inner sep=20pt] {$e$};
  \end{scope}
  \begin{scope}[thick]
  \draw (0.75,0) -- (0,0);
  \draw (0.75,0) -- (1,1);
  \draw (0.75,0) -- (1.5,-1);
  \draw (1.75,0.25) -- (1,1);
  \draw (1.75,0.25) -- (1.5,-1);
  \draw (1.75,0.25) -- (2.5,0.5);
  \node at (0.60,0.15)  {$x_K$};
  \node at (1.85,0.40) {$x_{K_e}$};
  \end{scope}
  \node at (1.25,0) {$K^*(e)$};
 \node at (0.5,-0.75) {$\Th^\star$};
 \end{tikzpicture}%
}%
\hfill
\caption{Primal and dual meshes, edges and vertices.}
\label{figMesh}
\end{figure}
The constructive procedure for defining the internal node $x_K$ independently of $h$ relies on the set of
vertices $\vartheta$ of the polyhedron $K$, together with a convex sequence of reals
$\{\beta_{K,\vartheta}\}_{\{\vartheta,\vartheta\in K\}}$ satisfying:
$$
0<\beta_{K,\vartheta}< 1, \quad \vartheta\in K;\qquad \sum_{\vartheta\in K} \beta_{K,\vartheta} = 1.
$$
The required internal node $x_K$ in $K$ is then defined by its coordinates in $\R^d$:
$
x_K = \sum_{\vartheta\in K} \beta_{K,\vartheta}~ x_\vartheta,
$ 
where $x_\vartheta$ stands for the coordinates of the vertex $\vartheta$. This construction ensures the correct behavior of the primal and dual meshes with the
definition of the $\ake$ and with the previous non--degeneracy assumptions
(\ref{intake})-(\ref{dualnondegen}), the CFL condition (\ref{CFL}) is then only modified according to the
choice of the function $v$ and its discrete representation.

To further proceed in the comprehensive derivation of the finite volume framework, some additional
notation is in order. For any $K$ in $\Th$ and $e$ in $\partial K$, an edge of a dual polyhedron
$\ks(e)\in\Thstar$ or of the subcell $\ks(e)\cap K$ of $K$ will be indifferently denoted by $\es$. Observe
that with little abuse in the notation, an edge $e$ of some cell $K$ of the primal mesh $\Th$ is also a
dual edge of the subcell $\ks(e)\cap K$: see indeed Figure \ref{figMesh}. At last $\nuesk \in\R^d$ stands
for the outward unit vector normal to the edge $\es$.

Equipped with these notation, we are in a position to re-interpret the quasi-one dimensional state
$\wwkve$ introduced in (\ref{defwwkve}) in term of a state in the subcell $\ks(e)\cap K$ of $K$, thanks
to the following simple but key identity:
$$
\sumesdk \voles \nuesk = 0, \quad \hbox{i.e.} \quad \vole \nuek = -\sum_{\es\in \ks(e)\cap K, ~\es\not =
e} \voles\nuesk.
$$
It is then straightforward to recast $\wwkve$ according to:
\be
\left.
\begin{aligned}
\wwkve &= \wkve - \frac{\tau}{\ake \volk} \gek(\wkve,\wkeve;\ve) \vole + \frac{\tau}{\ake \volk}
\flux(\wke)\cdot\nuek \vole,\\
& = \wkve - \frac{\tau}{\vert \ks(e)\cap K\vert}\Big( \gek(\wkve,\wkeve;\ve)\vole \\
&\quad + \displaystyle\sum_{\es\in \ks(e)\cap K, ~\es\not = e} \flux(\wkve,\ve)\cdot \nuesk
\voles\Big),
\end{aligned}
\right.
\ee
where we have used the interpretation (\ref{intake}) of $\ake$. Introducing the numerical flux
formula:
\be
\label{fluxstar}
g_{\es, \ks(e)} = 
\begin{cases}
\gek(\wkve,\wkeve;\ve), &\hbox{if } \es = e; \\  
\flux(\wkve,\ve)\cdot\nuesk, &\hbox{otherwise},
\end{cases}
\ee
$\wwkve$ thus reads 
\be
\label{balstar}
\wwkve = \wkve - \frac{\tau}{\vert \ks(e)\cap K\vert}\sumesdk g_{\es,\ks(e)} \voles. 
\ee 
We can clarify the origin of the definition
$g_{\es, \ks(e)} = \flux(\wkve,\ve)\cdot\nuesk$ for edges $\es$ distinct from $e$. For such an edge $\es$,
it is worth introducing the adjacent subcell~$\ks(e')\cap K$ to $\ks(e)\cap K$ in $K$: {\it i.e.} with
$e'$ in $\partial K$ such that $\ks(e')\cap\ks(e) = \es$. Note that $\es$ is of course distinct from $e'$.
We then successively rewrite the left-- and right--hand numerical flux at $\es$, say $g_{\es,
\ks(e)}$ (respectively $g_{\es, \ks(e')}$), as follows:
$$
\flux(\w(\uk,\ve),\ve)\cdot \nuesk, \quad \hbox{respectively :} -\flux(\w(\uk,\ve'),\ve')\cdot \nuesk,
$$ 
since by definition (\ref{defwkve}) $\wkve = \w(\uk,\ve)$ and $\w_{K,e'} = \w(\uk,\ve')$ and,
equivalently,
\be
\label{godflux}
\left.
\begin{array}{lll}
&\Big(\flux(\w(\u,\v),\v)\cdot \nuesk\Big)(\omega(0-)), \\
& \qquad \qquad  \hbox{respectively :} -\Big(\flux(\w(\u,\v),\v)\cdot \nuesk\Big)(\omega(0+)),
\end{array}
\right.
\ee
where $\omega(0^\mp)$ stands for the left and right traces at $\xi = 0$ of the self-similar function
$\omega : \xi \in \R_\xi \to (\u(\xi),\v(\xi))\in \R\times\R^L$ given by
\be
\label{defauto}
\omega(\xi) =
\begin{cases}
 (\uk, \ve),\quad &\xi <0,\\
 (\uk, \ve'),\quad &\xi >0.\\
\end{cases}
\ee
From Section \ref{formulation}, recall 
that the Riemann solution of 
\be
\label{defriem}
\begin{aligned}
\partial_t w + \partial_x \Big(\flux(\w(\u,\v),\v)\cdot \nuesk\Big)(\u,\v) - \partial_v
\Big(\flux(\w(\u,\v),\v)\cdot \nuesk\Big) : \nabla v &= 0,\\
\partial_t v &= 0
\end{aligned}
\ee
(with initial data $( (\uk, \ve),~x <0,~~ (\uk, \ve'),~x >0)$)
consists in a standing wave separating $
(\uk, \ve)$ from $ (\uk, \ve')$, and thus coincides with $\omega(\xi)$ in (\ref{defauto}). It is therefore
clear that the flux--functions in (\ref{godflux}) actually results from the Godunov method applied to the augmented
system (\ref{defriem}) at the edge $\es$. In other terms, the finite volume formula
(\ref{fluxstar})--(\ref{balstar}) in each subcell $\ks(e)\cap K$ may be understood as an approximation of
the balance law for governing $\u$ in (\ref{equk}):
$$
\dt \w(\u,\v)+ \nabla \cdot \flux(\w(\u,\v),\v) - \ll(\w(\u,\v),\v) : \nabla \v = 0, \quad x\in K,
~~t\in(t^n,t^{n+1}).
$$
This interpretation closes the gap in between the governing equation (\ref{equk}) for $\u$ and its reduced
form (\ref{eque}) expressed in $\w$:
$$
\dt \w+  \nabla \cdot \flux(\w,\ve) = 0, \quad x\in \ks(e)\cap K, ~~t\in(t^n,t^{n+1}).
$$


\subsection{Sup-norm estimates}

Throughout the upcoming sections, the assumptions of Theorem \ref{maintheo} are tacitly assumed to be
valid. Their formulations are thus skipped over in any forthcoming statements. The main result of this
section ensures that the sequence of approximate solutions $\uh$ stays uniformly bounded in
$L^{\infty}(\R_+\times\R^d)$ as a consequence of the following result.

\begin{proposition}[Maximum principle] 
\label{propprincmax}
The finite volume method satisfies the following inequalities (in the variable $\u$):
 \be
 \label{princmax}
\min\Big(\uk,~\min_{e\in\partial K}\uke\Big) \le \uuk \le \max\Big(\uk,\max_{e\in\partial K}\uke\Big)
\ee
in each polyhedron $K$ in $\Th$ and at all time level $t^n$.
 \end{proposition}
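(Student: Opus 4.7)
The plan is to transfer the classical monotone-scheme bound from the $w$ variable back to the $u$ variable, using crucially that in the subcell update the two reconstructed values $\wkve$ and $\wkeve$ share the \emph{same} parameter $\ve$.

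First, I would rewrite the one-dimensional subcell update \eqref{defwwkve} as a convex combination of $\wkve$ and $\wkeve$. By the mean-value theorem applied to the second argument of $\gek$, there exists
\[
 \mu_{K,e} = -\int_0^1 \frac{\partial \gek}{\partial w_e}\bigl(\wkve,\,\wkve+t(\wkeve-\wkve);\ve\bigr)\,dt \ge 0,
\]
(nonnegative by the monotonicity condition \eqref{gekmonot}), such that $\gek(\wkve,\wkeve;\ve)-\gek(\wkve,\wkve;\ve)=-\mu_{K,e}(\wkeve-\wkve)$. Substituting into \eqref{defwwkve} gives
\[
 \wwkve = (1-\lambda_{K,e}\mu_{K,e})\,\wkve + \lambda_{K,e}\mu_{K,e}\,\wkeve, \qquad \lambda_{K,e}=\frac{\tau\,\vole}{\ake\,\volk}.
\]
The local CFL condition \eqref{cflloc}, itself implied by the global CFL \eqref{CFL} together with the Lipschitz continuity of $\flux(\w(\cdot,\ve),\ve)$ on the invariant interval $[m,M]$, ensures $0\le \lambda_{K,e}\mu_{K,e}\le 1$, so that $\wwkve$ lies between $\wkve$ and $\wkeve$.

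Second, here is the decisive step. Both $\wkve=\cz(\uk,\ve)$ and $\wkeve=\cz(\uke,\ve)$ are evaluated through the \emph{same} strictly increasing map $\cz(\cdot,\ve)$ (by \eqref{monotC0}). Hence the interval spanned in $w$ is exactly the image under $\cz(\cdot,\ve)$ of the interval $[\min(\uk,\uke),\max(\uk,\uke)]$, and by continuity we may define $\uukve\in\R$ uniquely by $\cz(\uukve,\ve)=\wwkve$, with
\[
 \min(\uk,\uke) \,\le\, \uukve \,\le\, \max(\uk,\uke).
\]

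Third, I would recombine these subcell values. Summing \eqref{convwwk} yields
\[
 \wwk=\sumek \ake\,\cz(\uukve,\ve),
\]
while by \eqref{defuuk} we have $\sumek \ake\,\cz(\uuk,\ve)=\wwk$. Introduce the strictly increasing function $F(u)=\sumek \ake\,\cz(u,\ve)$ (monotonicity again from \eqref{monotC0} together with $\ake>0$). Setting $u_-=\min_{e\in\partial K}\uukve$ and $u_+=\max_{e\in\partial K}\uukve$, term-by-term monotonicity of each $\cz(\cdot,\ve)$ gives $F(u_-)\le F(\uuk)\le F(u_+)$, and strict monotonicity of $F$ inverts this to $u_-\le \uuk\le u_+$. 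Combining with the bound on each $\uukve$ from the previous step yields \eqref{princmax}.

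The only delicate point is step two: one cannot hope to derive the maximum principle directly on $w$, because $\wkve$ and $\wkeve$ in the \emph{averaged} quantity $\wwk$ involve different values of $\ve$ across different edges. The trick is that the one-dimensional bound holds edge by edge on an interval $[\min(\wkve,\wkeve),\max(\wkve,\wkeve)]$ where a single $\cz(\cdot,\ve)$ can be inverted; the averaging in $v$ is then handled cleanly at the level of $u$ through the monotonicity of $F$.
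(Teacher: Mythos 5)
Your proof is correct, and it reaches the conclusion by a genuinely more direct route than the paper in its second half. The first half coincides with the paper's Lemma~\ref{mpstart}: under the CFL condition, $\wwkve$ is a convex combination of $\wkve$ and $\wkeve$ (the paper simply quotes this as a ``well-known consequence'' of \eqref{gekmonot}, while you make the coefficient $\mu_{K,e}$ explicit; note that only the second-argument monotonicity is actually needed here, since the outgoing flux in \eqref{defwwkve} is frozen at the consistency value $\gek(\wkve,\wkve;\ve)$), and since both subcell states are images of $\uk$ and $\uke$ under the \emph{same} strictly increasing map $\cz(\cdot,\ve)$, the bound transfers to $\uukve$ defined by $\cz(\uukve,\ve)=\wwkve$. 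Where you diverge is the recombination step. The paper propagates the bound by an induction over growing subsets $\{e_1,\dots,e_J\}$ of $\partial K$ (Lemma~\ref{mppropagate}), adding one edge at a time and exploiting a sign argument on the product
\[
\Big(\Psi_J(a)-\Psi_J(b)\Big)\Big(\psi_{J+1}(a)-\psi_{J+1}(c)\Big)\le 0
\]
for the two strictly increasing functions $\Psi_J$ and $\psi_{J+1}$. You instead compare directly through the single strictly increasing function $F(u)=\sumek\ake\,\cz(u,\ve)$: since $F(\uuk)=\wwk=\sumek\ake\,\cz(\uukve,\ve)$ and each $\cz(\cdot,\ve)$ is increasing, $F(u_-)\le F(\uuk)\le F(u_+)$ with $u_\pm$ the extreme subcell values, and strict monotonicity of $F$ inverts the inequalities in one stroke. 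Both arguments rest on the same two pillars (conservativity \eqref{consrec} of the reconstruction and monotonicity \eqref{monotC0} of $\cz$ with positive weights \eqref{defake}), but yours dispenses with the recursion entirely; the paper's inductive formulation has the marginal advantage of producing the intermediate quantities $u^{n+1-}_{K,\{e_1,\dots,e_J\}}$, which are not reused elsewhere, so nothing is lost by your shortcut. Your remark that the global CFL \eqref{CFL} controls the local difference quotient in \eqref{cflloc} matches the paper's own (implicit) assumption on the numerical flux, stated just after \eqref{cflloc}.
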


Since $v_0\in W^{2,\infty}$ immediately implies a sup--norm
estimate for $\vh$ given by (\ref{defve}), we easily deduce, from the maximum principle (\ref{princmax}), an
additional uniform sup-norm estimate but for $w_h=\cz(\uh,\vh)$ arguing about the regularity properties
(\ref{smoothC}) of $\cz$:
\be
\label{supnormw}
\vert\vert \w_h\vert\vert_{L^\infty(\R^+\times\R^d)} \le {\cal O}(1).
\ee

Besides the monotonicity assumption (\ref{gekmonot}) met by the numerical flux functions, we stress that
the preservation of conservativity (\ref{consrec}) in the subcell reconstruction procedure plays a central
role in the validity of the reported maximum principle, as highlighted in the proof. The latter
will be carried out using a recursion procedure based on subsequent partitions of the set of edges $e$ in
$K$. To fix the notation and up to some relabeling, $\{e_1, \ldots, e_{J_K}\}$ represents the full set of
edges $e\in\partial K$ so that here the index $J_K$ is given by $\#\{e,e\in\partial K\}$. Subsets of the
form $\{e_1,...,e_J\}$, with increasing index $J\in \{1,...,J_K\}$, will be of concern as follows. Being
given $J$ with $1\le J\le K$, let us attach to the subset $\{e_1,...,e_J\}$ the solution
$\u^{n+1-}_{K,\{e_1,...,e_J\}}$ of the following nonlinear equation:
\be
\label{urecursive}
\displaystyle \sum_{1\le j\le J} \alpha_{K,e_{j}} \cz(\u^{n+1-}_{K,\{e_1,...,e_J\}},v_{e_j}) = \sum_{1\le
j\le J} \alpha_{K,e_{j}}w^{n+1-}_{K,{e_j}},
\ee
where the subcell states $w^{n+1-}_{K,{e_j}}$ are defined in (\ref{defwwkve}), Lemma
\ref{lemwwkve}. Again, the constitutive assumptions (\ref{smoothC})--(\ref{monotC0}) ensure existence and
uniqueness of a solution to (\ref{urecursive}).

Arguing about the conservation property (\ref{consrec}) satisfied at the subcell reconstruction step, it is
worth observing that $\u^{n+1-}_{K,\{e_1,...,e_{J_K}\}}$ can be identified with the final state $\uuk$ at
time $t^{n+1}$ in the finite volume approximation (\ref{defwkve})--(\ref{defwwk}). Therefore, the
recursion under consideration naturally ends up as soon as the index $J$ reaches the value $J_K$. In order
to initiate the recursion and propagate it, we need the following statement concerned with the values
$u^{n+1-}_{K,\{e_J\}}$, $1\le J\le J_K$, solutions of
$
\cz(u^{n+1-}_{K,\{e_J\}},v_{e_J}) = w^{n+1-}_{K,e_J}.
$

\begin{lemma}[Local maximum principle] 
\label{mpstart}
The maximum principle holds true at any edge $e_J$ in $\partial K$:
$$
\min(\uk,\u^n_{K_{e_J}})\le u^{n+1-}_{K,\{e_J\}} \le  \max(\uk,\u^n_{K_{e_J}}), \quad 1\le J \le J_K.
$$
\end{lemma}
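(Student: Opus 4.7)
The plan is to establish the inequality in two steps: first prove the analogous bound in the $w$--variable, and then transport it to the $u$--variable using the strict monotonicity of $\C_0(\cdot,v)$.

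\textbf{Step 1 (maximum principle in $w$).} Starting from the definition (\ref{defwwkve}), I would rewrite the update at the single edge $e_J$ as an incremental form. Because the two-point numerical flux $g_{e_J,K}(\cdot,\cdot;v_{e_J})$ is non-increasing in its second argument by (\ref{gekmonot}), the mean-value theorem produces a nonnegative coefficient $\mu \ge 0$ such that
\be
g_{e_J,K}(w^n_{K,e_J},w^n_{K_{e_J},e_J};v_{e_J}) - g_{e_J,K}(w^n_{K,e_J},w^n_{K,e_J};v_{e_J}) = -\mu\,(w^n_{K_{e_J},e_J} - w^n_{K,e_J}).
\ee
Substituting into (\ref{defwwkve}) yields
\be
w^{n+1-}_{K,e_J} = (1 - \lambda \mu)\, w^n_{K,e_J} + \lambda\mu \, w^n_{K_{e_J},e_J}, \qquad \lambda := \frac{\tau\,|e_J|}{\alpha_{K,e_J}\,|K|}.
\ee
The local CFL bound (\ref{cflloc}), itself a consequence of the global restriction (\ref{CFL}), forces $0 \le \lambda \mu \le 1$, so the update is a genuine convex combination and therefore
\be
\min\bigl(w^n_{K,e_J},\,w^n_{K_{e_J},e_J}\bigr) \le w^{n+1-}_{K,e_J} \le \max\bigl(w^n_{K,e_J},\,w^n_{K_{e_J},e_J}\bigr).
\ee

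\textbf{Step 2 (transfer to $u$).} By the subcell reconstruction (\ref{defwkve}) at time $t^n$, both edge--values satisfy $w^n_{K,e_J} = \C_0(u^n_K, v_{e_J})$ and $w^n_{K_{e_J},e_J} = \C_0(u^n_{K_{e_J}}, v_{e_J})$ for the \emph{same} value $v_{e_J}$ of the color function. The defining relation $\C_0(u^{n+1-}_{K,\{e_J\}}, v_{e_J}) = w^{n+1-}_{K,e_J}$ is thus an equation involving $\C_0(\cdot, v_{e_J})$ evaluated at three different $u$--arguments but at the same $v$. Since $\partial_u \C_0(\cdot, v_{e_J}) > 0$ by (\ref{monotC0}), the map $u \mapsto \C_0(u,v_{e_J})$ is strictly increasing on $\R$, hence so is its inverse. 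Applying this inverse to the bracketing inequality of Step~1 yields immediately
\be
\min(u^n_K, u^n_{K_{e_J}}) \le u^{n+1-}_{K,\{e_J\}} \le \max(u^n_K, u^n_{K_{e_J}}),
\ee
which is the desired local maximum principle.

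\textbf{Main obstacle.} The only delicate point is checking that the CFL condition as stated in (\ref{CFL}) is strong enough to guarantee $\lambda \mu \le 1$ independently of the $w$--range attained throughout the computation. This is where the uniform sup--norm control on $u_h$ (available by a prior inductive hypothesis based on the interval $[m,M]$ appearing in (\ref{CFL})) is used, together with the local Lipschitz continuity of $g$ (Remark~\ref{lipgek}) and the smoothness of $\C_0$, to bound $\mu$ by $\sup_{u\in[m,M]}|\partial_w f(w(u,v_{e_J}),v_{e_J})|$. Once this bound is in place, the convex--combination argument in Step~1 is routine.
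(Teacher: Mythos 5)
Your proof is correct and follows essentially the same route as the paper: establish the maximum principle for $\wwkve$ between $\wkve$ and $\wkeve$ from the monotonicity of $\gek$ under the CFL condition, then transfer it to the $u$--variable by inverting the strictly increasing map $u\mapsto\cz(u,\ve)$, using that both edge states are built with the same $\ve$. The only difference is that you spell out the incremental convex--combination argument for Step 1, which the paper simply invokes as a well-known consequence of (\ref{gekmonot}).
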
 

Then the maximum principle ``propagates'' to sets $\{e_1,...,e_J\}$, as follows. 

\begin{lemma}
\label{mppropagate}
The solution $u^{n+1-}_{K,\{e_1,...,e_{J}\}}$ to (\ref{urecursive}) with $J\in\{1,...,J_K\}$, obeys the
following maximum principle:
$$
\min\Big(\uk,\min_{1\le j \le J}(u^n_{K_{e_j}})\Big) \le u^{n+1-}_{K,\{e_1,...,e_{J}\}} \le
\max\Big(\uk,\max_{1\le j \le J}(u^n_{K_{e_j}})\Big).
$$
\end{lemma}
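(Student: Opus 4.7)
The plan is to prove this directly without a further induction on $J$, using Lemma \ref{mpstart} as the base ingredient and exploiting only the strict monotonicity (\ref{monotC0}) of $\cz(\cdot, v)$ in its first argument. The first step is to re-express (\ref{urecursive}) by substituting the defining relation $w^{n+1-}_{K,e_j} = \cz(u^{n+1-}_{K,\{e_j\}}, v_{e_j})$ of the partial states: setting $U := u^{n+1-}_{K,\{e_1,\dots,e_J\}}$, the equation (\ref{urecursive}) becomes
\[
\sum_{j=1}^J \alpha_{K,e_j}\Bigl(\cz(U, v_{e_j}) - \cz(u^{n+1-}_{K,\{e_j\}}, v_{e_j})\Bigr) = 0.
\]

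Next, I would establish that $U$ lies between the extrema of the partial states $u^{n+1-}_{K,\{e_j\}}$ over $j$. If $U$ were strictly greater than $M := \max_{1 \le j \le J} u^{n+1-}_{K,\{e_j\}}$, then by strict monotonicity of $\cz(\cdot, v_{e_j})$ each bracket in the sum would be strictly positive, and since the weights $\alpha_{K,e_j}$ are all positive by (\ref{defake}), the sum could not vanish---a contradiction. The symmetric argument controls $U$ from below by $m := \min_{1 \le j \le J} u^{n+1-}_{K,\{e_j\}}$, yielding $m \le U \le M$.

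Finally, I would invoke Lemma \ref{mpstart} which, applied index by index, ensures that
\[
\min(\uk, u^n_{K_{e_j}}) \le u^{n+1-}_{K,\{e_j\}} \le \max(\uk, u^n_{K_{e_j}}), \qquad 1\le j\le J.
\]
Taking the minimum over $j$ of the lower bounds and the maximum over $j$ of the upper bounds, and then chaining with the inequality $m \le U \le M$ obtained in the previous step, delivers the announced inequalities.

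I do not anticipate any genuine obstacle in carrying out this program: the entire argument hinges on a single use of strict monotonicity of $\cz(\cdot, v)$, combined with the positivity of the convex weights $\{\alpha_{K,e_j}\}$. All of the substantive analysis---the CFL condition (\ref{CFL}) and the monotonicity (\ref{gekmonot}) of the numerical flux---is absorbed once and for all into the base-case Lemma \ref{mpstart}; the present propagation statement only exploits the non-degeneracy of the change of variable $u \mapsto \cz(u, v)$ together with the convex structure of (\ref{urecursive}).
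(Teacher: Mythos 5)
Your proof is correct, but it takes a genuinely different route from the paper's. The paper argues by induction on $J$: it splits the defining sum for $\{e_1,\dots,e_{J+1}\}$ into the contribution of $\{e_1,\dots,e_J\}$ plus that of $e_{J+1}$, subtracts the two defining identities, and observes that the product $\bigl(\Psi_J(U_{J+1})-\Psi_J(U_J)\bigr)\bigl(\psi_{J+1}(U_{J+1})-\psi_{J+1}(u^{n+1-}_{K,e_{J+1}})\bigr)$ is nonpositive; since both $\Psi_J$ and $\psi_{J+1}$ are strictly increasing (by (\ref{monotC0}) and (\ref{defake})), the new solution is squeezed between the previous partial solution and the new single--edge state, and the induction hypothesis plus Lemma \ref{mpstart} close the step. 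You instead compare the full solution $U$ \emph{directly and simultaneously} to all $J$ single--edge states: writing $\sum_j \alpha_{K,e_j}\bigl(\cz(U,v_{e_j})-\cz(u^{n+1-}_{K,\{e_j\}},v_{e_j})\bigr)=0$ and noting that if $U$ exceeded the maximum of the $u^{n+1-}_{K,\{e_j\}}$ every summand would be strictly positive, you obtain $m\le U\le M$ in one stroke. Both arguments consume exactly the same ingredients --- the strict monotonicity (\ref{monotC0}), the positivity of the weights (\ref{defake}), and Lemma \ref{mpstart} as the repository of the CFL and flux--monotonicity hypotheses --- and both are valid. What your version buys is brevity and the elimination of the auxiliary recursion on nested edge sets; what the paper's version buys is the finer nesting information that each partial solution $u^{n+1-}_{K,\{e_1,\dots,e_{J+1}\}}$ lies between $u^{n+1-}_{K,\{e_1,\dots,e_J\}}$ and $u^{n+1-}_{K,e_{J+1}}$, which is not needed for the stated bound but mirrors the convex--decomposition philosophy used elsewhere in the scheme's analysis.
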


The proposed lower and upper bounds for $\u^{n+1-}_{K,\{e_1,...,e_{J_K}\}}$, {\it i.e.} the estimate in the lemma 
 with $J=J_K$, just reads the expected local maximum principle (\ref{princmax}) for
$\uuk$, since again $\uuk$ coincides with $\u^{n+1-}_{K,\{e_1,...,e_{J_K}\}}$ by construction.

\begin{proof}[Proof of Lemma \ref{mpstart}] 
To alleviate the notation we skip the index $J$ and first point out an estimate valid under
the CFL restriction (\ref{CFL}) for any edge $e$ in $\partial K$:
\be
\label{mpmonotony}
\min(\wkve,\wkeve) \le \wwkve \le \max(\wkve,\wkeve)
\ee
as a well-known consequence of the monotonicity assumptions (\ref{gekmonot}) satisfied by the numerical
flux function $\gek(.,.;\ve)$. We then recall that the subcell reconstruction step (\ref{defwkve}) builds
$\wkve = \cz(\uk,\ve)$ while the identity $w^{n+1-}_{K,e} = \cz(u^{n+1-}_{K,\{e\}},v_{e})$ holds from our
definition. We can thus recast (\ref{mpmonotony}) as:
$
\min(\cz(\uk,\ve),\cz(\uke,\ve)) \le \cz(u^{n+1-}_{K,\{e\}},\ve) \le \max(\cz(\uk,\ve),\cz(\uke,\ve)),
$
from which we immediately deduce the desired estimate, namely
$$
\min(\uk,\uke) \le u^{n+1-}_{K,\{e\}} \le \max(\uk,\uke), \quad e\in\partial K
$$
since the function $\cz$ is by assumption (\ref{monotC0}) strictly increasing in its first argument.
\end{proof}

\begin{proof}[Proof of Lemma \ref{mppropagate}]
The desired lower-upper bounds with $J=1$ are stated in Lemma \ref{mpstart}. Then, assuming the validity of the maximum principle at rank $J$, $1 \le J < J_K$, this one is proved to hold at the rank $(J+1)$ starting from (\ref{urecursive}):
$$
\left.
\begin{aligned}
& \displaystyle \sum_{1\le j\le (J+1)} \alpha_{K,e_{j}} \cz(u^{n+1-}_{K,\{e_1,...,e_{(J+1)}\}},v_{e_j}) 
\\
& =
\displaystyle \sum_{1\le j\le J} \alpha_{K,e_{j}}w^{n+1-}_{K,e_{j}} + \alpha_{K,e_{(J+1)}}
w^{n+1-}_{K,e_{(J+1)}},\\
&=  \displaystyle \sum_{1\le j\le J} \alpha_{K,e_{j}} \cz(u^{n+1-}_{K,\{e_1,...,e_{J}\}},v_{e_j}) 
 + ~\alpha_{K,e_{(J+1)}} \cz(u^{n+1-}_{K,e_{(J+1)}},v_{e_{(J+1)}}).
 \end{aligned}
 \right.
$$
We recast the above identity as follows:
$$
\left.
\begin{array}{llll}
& \displaystyle \sum_{1\le j\le J} \alpha_{K,e_{j}} \cz(u^{n+1-}_{K,\{e_1,...,e_{(J+1)}\}},v_{e_j}) -
\sum_{1\le j\le J} \alpha_{K,e_{j}}
\cz(u^{n+1-}_{K,\{e_1,...,e_{J}\}},v_{e_j})  
\\
&= - ~\alpha_{K,e_{(J+1)}}\Big( \cz(u^{n+1-}_{K,\{e_1,...,e_{(J+1)}\}},v_{e_{(J+1)}})-
\cz(u^{n+1-}_{K,e_{(J+1)}},v_{e_{(J+1)}})\Big).
 \end{array}
 \right.
$$
To condense the notation, we introduce the two functions $\u\mapsto \Psi_J(u) = \displaystyle
\sum_{1\le j\le J} \alpha_{K,e_{j}} \cz(u,v_{e_j})$ and $\u\mapsto \psi_{(J+1)}(u) =
\alpha_{K,{e_{(J+1)}}} \cz(u,v_{e_{(J+1)}})$ so as to deduce:
$$
\left.
\begin{aligned}
\Big(\Psi_J(u^{n+1-}_{K,\{e_1,...,e_{(J+1)}\}}) & -\Psi_J(u^{n+1-}_{K,\{e_1,...,e_{J}\}})\Big)
\Big(\psi_{(J+1)}(u^{n+1-}_{K,\{e_1,...,e_{(J+1)}\}})- \psi_{(J+1)}(u^{n+1-}_{K,e_{(J+1)}})\Big) \le 0,
\end{aligned}
\right.
$$
since by assumption (\ref{defake}) $\alpha_{K,e_{(J+1)}} > 0$. But the monotonicity hypothesis
(\ref{monotC0}) on $\cz$ together with again assumption (\ref{defake}) imply that both functions $u
\mapsto \Psi_J(u)$ and $u \mapsto \psi_{(J+1)}(u)$ strictly increase with $\u$ so that the above inequality 
yields 
$$
\min(u^{n+1-}_{K,\{e_1,...,e_{J}\}},u^{n+1-}_{K,e_{(J+1)}}) \le u^{n+1-}_{K,\{e_1,...,e_{(J+1)}\}} \le 
\max(u^{n+1-}_{K,\{e_1,...,e_{J}\}},u^{n+1-}_{K,e_{(J+1)}}). 
$$
Lemma \ref{mpstart} implies 
$
 \min(\uk,\u^n_{K_{e_{(J+1)}}})\le u^{n+1-}_{K,e_{(J+1)}} \le  \max(\uk,\u^n_{K_{e_{(J+1)}}})
$, and the proof is completed. 
 \end{proof}


\section{Entropy inequalities}

\subsection{Preliminaries}

Proposition \ref{propprincmax} asserts sup--norm boundedness for the sequence $\uh$ which in the
absence of an {\it a priori} strong compactness argument, leads us to study the structure of the Young
measure $\mu$ associated with $\{\uh\}_{h>0}$. Recall that such a Young measure represents all the
composite weak-star limits $a(\uh)$ of $\uh$ with continuous functions $a\in{\cal C}^0(\R)$, namely for all 
continuous functions in a single variable
$$
a(\uh)\quad \longrightharpoonup \quad <\mu,a> = \int_\R a(\lambda) d\mu(\lambda),  
$$
weakly-star in $L^\infty$. 
We propose to establish that the measure $\mu$ under consideration reduces to a Dirac measure, and hence
to prove the strong convergence of $\uh$, invoking DiPerna's uniqueness
theorem~\cite{DiPerna85} for entropy measure--valued solutions.

In this section we derive the required discrete entropy inequalities together with the {\it a priori}
estimates that are needed to handle the passage to the limit in the sense of measure valued solutions. In
this respect, the main issue is to assess the relevance of the Young measure $\mu$ in such a limit. Indeed, 
discrete entropy inequalities generically involve numerical flux
functions, that are continuous functions but of (at least) {\it two} arguments: the sequence $\uh(.)$
itself and its shift $\Delta_h\uh = \uh(.+h)$. Nonlinear superposition of possible discrete
oscillations in $\uh$ and its shift $\Delta_h \uh$ may prevent the usual Young measure $\mu$ to represent
the composite weak-star limit of $G(\uh, \Delta_h \uh)$. Counterexamples have been constructed in 
Coquel and LeFloch \cite{CoquelLeFloch93}. Some weak control over possible discrete oscillations is
therefore mandatory in order to justify the applicability of $\mu$ in the limiting form of discrete
entropy inequalities.

The requisite weak estimate corresponds to some estimate of the discrete entropy dissipation
rate in the finite volume approximation. The derivation of several specific estimates with distinctive
features have been the matter of a large literature following Coquel and
LeFloch \cite{CoquelLeFloch90}. (The reader is referred to the introduction where several subsequent
contributions were quoted.) The estimates we derive now generalize the ones in Cockburn, Coquel, and LeFloch \cite{CockburnCoquelLeFloch95}. The entropy dissipation estimate does not allow actually to pass to the weak limit in arbitrary numerical
entropy--flux functions, but nevertheless turns out to be sufficient in order to handle discrete entropy inequalities. The main interest in such an estimate stems from the simplicity of its derivation.

\subsection{Discrete entropy estimates}

We first focus on the derivation of the discrete entropy inequalities and then the required weak estimate. The passage to the limit in the discrete
inequalities is the subject of the following section. After Crandall and Majda \cite{CrandallMajda80}, assumptions (\ref{gekconsi})--(\ref{gekmonot}) on the
numerical flux functions $\gek$ are known to yield a full set of discrete entropy inequalities for
scalar conservation laws. Here and in the light of Section \ref{formulation}, the scalar
conservation laws of concern have to be found locally at each edge $e$ in $\Th$, and take the generic
form
\be
\label{slceloc}
\dt \w+  \nabla \cdot \flux(\w,\v) = 0,
\ee
for a given $\v \in \R$.
Associated entropy pairs were defined earlier in (\ref{entropyflux}--(\ref{entropyfluxw}).
The inequalities stated below are naturally built from the subcell states $\wwkve$ (\ref{defwwkve})
of Lemma \ref{lemwwkve} and in this regard may be understood as subcell entropy inequalities.

\begin{lemma}[Entropy inequalities per cell] 
Let $(\U,\F) : \R \to \R\times \R^d$ be any convex entropy pair for the scalar conservation law
(\ref{slceloc}), where $e$ denotes any edge in $\partial K$ for an arbitrarily $K$ in $\Th$. Then
there exists a numerical entropy flux function $\Gek : \R^2\to \R$ that satisfies the consistency
property
\be
\label{egekconsi}
\Gek(\w,\w;\ve) = \F(\w,\ve)\cdot\nuek,
\ee
the conservation property
\be
\label{egekconse}
\Gek(\w,\w_e;\ve) = - G_{e,K_e}(\w_e,\w;\ve)
\ee
for all reals $\w$ and $\w_e$, so that the following discrete entropy inequality holds 
\be
\label{subcellen}
\U(\wwkve) - \U(\wkve) + \frac{1}{\ake}\frac{\tau \vole}{\volk}\Big(\Gek(\wkve,\wkeve;\ve) -
\F(\wkve,\ve)\cdot \nuek\Big) \le 0.
\ee
\end{lemma}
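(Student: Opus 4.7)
The plan is to recognize the update formula (\ref{defwwkve}) as a monotone two--point finite volume scheme for the scalar conservation law (\ref{slceloc}) in the direction $\nuek$, and then invoke the classical Crandall--Majda machinery \cite{CrandallMajda80} to produce the required entropy inequalities. First I would rewrite $\wwkve = H_e(\wkve,\wkeve)$ with
$$
H_e(a,b) := a - \frac{\vole}{\ake}\frac{\tau}{\volk}\bigl(\gek(a,b;\ve) - \gek(a,a;\ve)\bigr),
$$
which is, thanks to the consistency condition (\ref{gekconsi}), a consistent two--point approximation of $\dt w + \nabla\cdot\flux(w,\ve) = 0$ projected along $\nuek$. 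Under the CFL restriction (\ref{CFL}) combined with (\ref{gekmonot}), I would verify that $H_e$ is nondecreasing in each of its two arguments: monotonicity in $b$ is immediate from $\partial_b\gek\le 0$; monotonicity in $a$ follows from a mean--value argument applied to $\gek(a,b;\ve)-\gek(a,a;\ve)$ together with the identity $\partial_a\gek(a,a;\ve)+\partial_b\gek(a,a;\ve)=\partial_w\flux(a,\ve)\cdot\nuek$ (obtained by differentiating the consistency relation) and the flux bound built into (\ref{CFL}).

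For the inequality itself I would proceed via Kruzkov entropies. Setting $\U_k(w) = |w - k|$ and $\F_k(w,\ve) = \mathrm{sgn}(w-k)\bigl(\flux(w,\ve) - \flux(k,\ve)\bigr)$, I define
$$
\Gek(a,b;\ve) := \gek(a\vee k, b\vee k;\ve) - \gek(a\wedge k, b\wedge k;\ve).
$$
Consistency (\ref{egekconsi}) is then read off from (\ref{gekconsi}) and conservation (\ref{egekconse}) from (\ref{gekconse}). The inequality follows by combining monotonicity of $H_e$ with the fixed--point identity $H_e(k,k) = k$: together these give
$$
H_e(a\wedge k, b\wedge k) \le H_e(a,b)\wedge k \le H_e(a,b)\vee k \le H_e(a\vee k, b\vee k),
$$
and subtracting the outer bounds yields $|H_e(a,b) - k| \le H_e(a\vee k, b\vee k) - H_e(a\wedge k, b\wedge k)$. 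Expanding the right--hand side through the definition of $H_e$ and using $\gek(a\vee k, a\vee k;\ve) - \gek(a\wedge k, a\wedge k;\ve) = \F_k(a,\ve)\cdot\nuek$ produces exactly (\ref{subcellen}) for the Kruzkov pair $(\U_k,\F_k)$.

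Finally, I would extend the inequality to an arbitrary convex $C^2$ entropy $\U$ by the standard superposition argument: writing $\U$ as an affine function plus an integral of the form $\int_\R |w-k|\,d\mu(k)$ with $d\mu$ proportional to $\U''$, defining $\Gek$ as the corresponding integral of the Kruzkov numerical entropy fluxes (the affine contribution is handled directly by (\ref{gekconsi})--(\ref{gekconse})), and integrating the Kruzkov inequality against $d\mu$ to recover (\ref{subcellen}). The step I expect to require the most care is the monotonicity verification for $H_e$ with respect to its first argument, since $a$ enters both slots of $\gek$; the CFL condition (\ref{CFL}) has been precisely calibrated, via the supremum of $|\partial_w\flux|$ over the invariant interval $[m,M]$ supplied by Proposition~\ref{propprincmax}, to absorb the resulting mixed term in an E--scheme spirit. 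Once this monotonicity is secured, the remainder of the argument is routine bookkeeping.
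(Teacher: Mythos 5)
Your argument is correct and is precisely the classical Crandall--Majda proof that the paper itself relies on: the text offers no proof of this lemma beyond the citation of \cite{CrandallMajda80}, and your reconstruction via Kruzkov entropies $|w-k|$, the numerical entropy flux $\gek(\cdot\vee k,\cdot\vee k;\ve)-\gek(\cdot\wedge k,\cdot\wedge k;\ve)$, the monotonicity and fixed--point properties of the update map $H_e$, and superposition over $k$ is exactly the intended standard route. The only point worth flagging is that monotonicity of $H_e$ in its first argument strictly requires a CFL condition expressed through the Lipschitz constants of $\gek$ itself rather than $\sup_{u}\vert\partial_w \flux\vert$ alone (the two agree up to a harmless constant for Godunov-- or Engquist--Osher--type fluxes), but this is the same convention the paper adopts in (\ref{CFL}) and (\ref{cflloc}), so it is not a gap relative to the source.
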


We refer the reader to \cite{CrandallMajda80} for a proof of this classical result. As
already claimed, the weak estimate will not allow to pass weakly to the limit in arbitrary
numerical entropy flux-functions. We thus propose to merge inequalities (\ref{subcellen}) in such a way
that solely exact entropy flux--functions $\F(\w,\ve)\cdot \nuek$ enter the  weak form.  

\begin{lemma}
\label{lemdwis}
Let $\phi$ be any non--negative test function in ${\cal D}(\R^*_+\times \R^d)$. Define for any 
edge $e$ in $\Th$, the average
\be
\label{phie}
\phi^n_e = \frac{1}{\tau\vole}\int_{t^n}^{t^{n+1}}\int_e \phi(x,t)dxdt.
\ee
Then, the following discrete weak inequality holds 
\be
\label{dwis}
\begin{aligned}
\sumkT \sumek \ake  \Big(\U(\wwkve)& -~ \U(\wkve)\Big) \phi^n_e \volk 
 -  ~\tau \sumkT \sumek  \F(\wkve,\ve)\cdot \nuek \phi^n_e \vole \le 0. 
\end{aligned}
\ee
\end{lemma}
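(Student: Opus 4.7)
The strategy is direct: weight the per–cell entropy inequality by a nonnegative factor depending on the test function and sum pairwise over adjacent cells, using the conservation of the numerical entropy flux to eliminate the inner-flux terms.

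More precisely, I start from the subcell entropy inequality (\ref{subcellen}), which I multiply throughout by the nonnegative quantity $\ake\volk\phi^n_e$ (nonnegativity coming from $\phi\ge0$, $\ake>0$, and $\volk>0$). This yields, for every polyhedron $K\in\Th$ and every edge $e\in\partial K$,
\[
\ake\volk\phi^n_e\bigl(\U(\wwkve)-\U(\wkve)\bigr)
+\tau\vole\phi^n_e\,\Gek(\wkve,\wkeve;\ve)
-\tau\vole\phi^n_e\,\F(\wkve,\ve)\cdot\nuek\le 0.
\]
Summing this inequality over all $K\in\Th$ and all $e\in\partial K$ immediately produces the left-hand side of (\ref{dwis}) \emph{plus} one extra term, namely the numerical entropy flux contribution $\tau\sum_{K,e}\vole\phi^n_e\Gek(\wkve,\wkeve;\ve)$. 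The whole game is to show that this extra term vanishes.

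For this, I would reorganize the double sum by edges rather than by cells: every interior edge $e$ is shared by exactly two polyhedra, namely $K$ and $K_e$, and thus appears twice in the sum, contributing
\[
\vole\phi^n_e\bigl(\Gek(\wkve,\wkeve;\ve)+G_{e,K_e}(\wkeve,\wkve;\ve)\bigr).
\]
Here I use that $\vole$ and $\phi^n_e$ (defined in (\ref{phie}) as an integral over the edge and a time slab, with no reference to an orientation) are intrinsic to the edge and therefore identical in both occurrences. The conservation property (\ref{egekconse}) of the numerical entropy flux asserts exactly that the bracket above is zero, so the sum cancels edge by edge. This yields (\ref{dwis}).

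No step here is delicate once the per-cell inequality is available; the only point requiring a little care is the pairwise reorganization, and in particular the fact that $\phi^n_e$ depends symmetrically on the two adjacent cells. Since this symmetry is built into definition (\ref{phie}), the cancellation is clean and the proof reduces essentially to bookkeeping.
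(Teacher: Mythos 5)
Your proof is correct and follows essentially the same route as the paper: both arguments weight the subcell entropy inequality (\ref{subcellen}) by the nonnegative edge quantities and cancel the numerical entropy fluxes pairwise across each edge via the conservation property (\ref{egekconse}), exploiting that $\vole$, $\ve$, and $\phi^n_e$ are intrinsic to the edge. The only cosmetic difference is the order of operations (you sum first and then pair the $\Gek$ terms edge by edge, whereas the paper pairs the two subcell inequalities at each edge before summing), which changes nothing of substance.
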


The proof is postponed to the end of this section. We shall easily deduce from the discrete
inequality (\ref{dwis}) the following continuous (in space) inequality.

\begin{proposition}
\label{propdwit}
The finite volume approximation (\ref{defwkve})--(\ref{defwwk}) obeys at each time level $t^n$ the
following (discrete in time) entropy inequality
\be
\label{dwit}
\left.
\begin{array}{lll}
\displaystyle
&\sumkT \sumek \ake  \Big(\U(\wwkve) - \U(\wkve)\Big) \phi^n_e \volk \\
& - \displaystyle \iint_{]{t^n},{t^{n+1}}[\times \R^d}\Q(\u^n_h,v(x)) \cdot \nabla \phi(x,t) +\phi(x,t)
\partial_v\Q (\u^n_h,v(x)) : \nabla v(x) dx dt \\
& \le ~{\cal O}(h) \tau\ \vert\vert \phi\vert\vert_{W^{1,\infty}(]t^n,t^{n+1}[\times
\R^d)}\vert \supp(\phi)\vert.
\end{array}
\right.
\ee
\end{proposition}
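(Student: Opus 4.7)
The starting point is Lemma~\ref{lemdwis}, and the strategy is to show that the discrete flux sum
\[
T_h := \tau \sumkT \sumek \F(\wkve,\ve)\cdot\nuek\,\phi^n_e\vole
\]
appearing in (\ref{dwis}) differs from the continuous integral
\[
T := \iint_{]t^n,t^{n+1}[\times \R^d} \bigl[\Q(u^n_h,v)\cdot \nabla \phi + \phi\,\partial_v \Q(u^n_h,v) : \nabla v\bigr]\,dx\,dt
\]
by at most $\mathcal{O}(h)\,\tau\,\|\phi\|_{W^{1,\infty}}\,|\supp\phi|$; combining this bound with (\ref{dwis}) then yields (\ref{dwit}). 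The first step is to exploit that $u^n_h$ is cell-wise constant, equal to $u^n_K$ on $K$, so that the bracket in $T$ is exactly $\nabla_x\cdot[\phi(x,t)\Q(u^n_K,v(x))]$ on $K$; Green's formula then recasts $T$ as a sum of boundary integrals
\[
T = \sumkT \sumek \int_{t^n}^{t^{n+1}}\!\int_e \phi(x,t)\,\Q(u^n_K, v(x))\cdot \nuek\,dS\,dt.
\]
Since $\wkve = \cz(u^n_K,v_e)$ gives $\F(\wkve,v_e)=\Q(u^n_K,v_e)$ via (\ref{entropyfluxw}), and since both $\phi^n_e$ and $v_e$ are constant on $e\times(t^n,t^{n+1})$, $T_h$ reads in exactly the same form but with integrand $\phi^n_e\,\Q(u^n_K,v_e)\cdot\nuek$.

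I would then add and subtract $\phi(x,t)\,\Q(u^n_K,v_e)$ to decompose $T - T_h = R_1 + R_2$, where
\[
R_1 := \sumkT\sumek \int_{t^n}^{t^{n+1}}\!\!\int_e \phi\,[\Q(u^n_K,v(x))-\Q(u^n_K,v_e)]\cdot\nuek\,dS\,dt,
\]
\[
R_2 := \sumkT\sumek \int_{t^n}^{t^{n+1}}\!\!\int_e [\phi-\phi^n_e]\,\Q(u^n_K,v_e)\cdot\nuek\,dS\,dt.
\]
By the definition (\ref{phie}), $\phi^n_e$ is the space-time average of $\phi$ on $e\times(t^n,t^{n+1})$, and $\Q(u^n_K,v_e)\cdot\nuek$ is constant there, so $R_2 = 0$ exactly. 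For $R_1$, I would use three ingredients: (i) $v_e = \tfrac{1}{|e|}\int_e v$, (ii) $v\in W^{2,\infty}(\R^d)$, and (iii) the smoothness of $\Q$. A Taylor expansion of $v$ about the centroid $\overline y_e$ of $e$ gives
\[
v(x) - v_e = \nabla v(\overline y_e)\cdot(x-\overline y_e) + \mathcal{O}(h^2\,\|v\|_{W^{2,\infty}}),
\]
the leading linear term having zero mean on $e$. Decomposing $\phi(x,t) = \phi(\overline y_e,t)+[\phi(x,t)-\phi(\overline y_e,t)]$ and invoking the smoothness of $\Q$, the contribution of the centered piece to the integrand against $dS$ vanishes at first order by the zero-mean property, while the remainder is $\mathcal{O}(h^2)\|\phi\|_{W^{1,\infty}}$. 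Either way, each edge $e$ contributes at most $\mathcal{O}(h^2)\,|e|\,\|\phi\|_{W^{1,\infty}}$ per unit of time.

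To conclude, I would sum over cells $K$ and edges $e\in\partial K$ intersecting $\supp\phi$: the non-degeneracy conditions (\ref{nondegenere})--(\ref{ehnondeg}) bound the number of involved cells by $\mathcal{O}(|\supp\phi|/h^d)$, each with a uniformly bounded number of faces of $(d-1)$-measure $\mathcal{O}(h^{d-1})$; multiplication by $\tau$ yields $|R_1|\le\mathcal{O}(h)\,\tau\,\|\phi\|_{W^{1,\infty}(]t^n,t^{n+1}[\times\R^d)}\,|\supp\phi|$, which is precisely the right-hand side of (\ref{dwit}). The main technical subtlety is securing the sharp $\mathcal{O}(h)$ bound rather than an $\mathcal{O}(1)$ bound: a crude Lipschitz estimate on $\Q(u^n_K,v(x))-\Q(u^n_K,v_e)$ alone leaves an $\mathcal{O}(1)$ leftover after summation, so one must simultaneously exploit the zero-mean property of $v-v_e$ on $e$ and the $W^{2,\infty}$ regularity of $v$ to bootstrap the accuracy; this is also where the full smoothness hypothesis $v_0\in W^{2,\infty}$ (as opposed to merely Lipschitz) enters essentially.
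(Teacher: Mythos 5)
Your proof is correct, and it takes a genuinely different route from the paper's. The paper stays at the discrete level: it introduces the cell average $\vk=\sumek\ake\ve$, Taylor-expands $\Q(\uk,\ve)$ about $\vk$, and reconstructs both the $\Q\cdot\nabla\phi$ term and the $\phi\,\partial_v\Q:\nabla v$ term from the discrete flux balance via the exact identities $\sumek\phi^n_e\vole\nuek=\frac{1}{\tau}\int_{t^n}^{t^{n+1}}\int_K\nabla\phi\,dx\,dt$ and $\sumek\ve\otimes\nuek\vole=\int_K\nabla v\,dx$ (Remark~\ref{gradve}), the latter being where the edge-average definition (\ref{defve}) of $\ve$ is exploited. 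You instead work at the continuous level: your observation that the volume integrand equals the exact divergence $\nabla_x\cdot\big[\phi\,\Q(\uk,v(x))\big]$ on each cell (because $\u^n_h$ is piecewise constant there) is not used in the paper, and it lets you convert the entire continuous term into edge integrals and compare edge by edge, with $R_2$ vanishing identically and $R_1=\mathcal{O}(h^2)$ per edge thanks to the centroid cancellation. Both arguments ultimately exploit the specific choice (\ref{defve}) of $\ve$, but in different ways (exact discrete Green identity versus zero mean of the linear part of $v-\ve$ on $e$), and they spend regularity differently: the paper's intermediate estimates such as (\ref{oscilvevk}) use only the Lipschitz continuity of $v$ at this stage, whereas your bootstrap to $\mathcal{O}(h^2)$ per edge genuinely consumes the $W^{2,\infty}$ bound --- which is available under the standing assumptions, so this is not a defect, and you correctly identify that a crude Lipschitz bound on $\Q(\uk,v(x))-\Q(\uk,\ve)$ would only give $\mathcal{O}(1)$ after summation. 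A small bonus of your version is that it lands directly on the integrand $\Q(\u^n_h,v(x))$ exactly as written in (\ref{dwit}), whereas the paper's computation ends with $\Q(\uk,\vk)$ and leaves the final $\mathcal{O}(h)$ replacement of $\vk$ by $v(x)$ implicit.
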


The proof, at the end of this section, essentially makes use of the uniform
sup--norm estimate (\ref{princmax}) for the sequence $\uh$ together with the regularity assumption
$v_0\in W^{2,\infty}$.  

Clearly, the Young measure $\mu$ can tackle the weak limit of the space derivatives involved in inequality
(\ref{dwit}) extended to any time interval $(0,T)$, $T>0$. Such a claim then naturally rises the question
of passing to the weak limit in the discrete time derivative. The latter is conveniently
decomposed as 
\be
\label{tddecomp}
\begin{aligned}
& \sumkT \sumek \ake  \Big(\U(\wwkve) - \U(\wkve)\Big) \phi^n_e \volk  \\
&= \sumkT\Big(\U(\wwk) - \U(\wk)\Big) \phi^n_K \volk
-  \sumkT\sumek \ake  \Big(\U(\wwk)-\U(\wwkve)\Big) \phi^n_e \volk \\
&\quad -  \sumkT\sumek \ake  \Big(\U(\wkve)-\U(\wk) \Big) \phi^n_e \volk,
\end{aligned}
\ee
where
\be
\label{phik}
 \phi^n_K = \sumek \ake \phi^n_e.
\ee
The last two error terms entering the righ--hand side of (\ref{tddecomp}) are devoted to sum up
\be
\label{errordei}
\begin{aligned}
&\sumnn \sumkT\sumek \ake  \Big(\U(\wwk)-\U(\wwkve)\Big) \phi^n_e \volk \\
&+ \sumnn \sumkT\sumek \ake  \Big(\U(\wkve)-\U(\wk) \Big) \phi^n_e \volk,
\end{aligned}
\ee
with other error terms in the right--hand side of the discrete entropy inequalities (\ref{dwit}). The
former must therefore be proved to go to zero with $h$.

\begin{lemma} 
\label{lemerrort}
For any polyhedron $K$ in $\Th$, one has 
\be
\label{estiwkwkve}
\sumek \ake \Big(\U(\wkve)-\U(\wk)\Big) \phi^n_e \le {\cal O}(h^2) \vert \vert \phi
\vert\vert_{W^{1,\infty}(]t^n,t^{n+1}[\times K)},
\ee
while
\be
\label{estiwwkwwkve}
\begin{aligned}
& \sumek \ake \Big(\U(\wwk)
-\U(\wwkve)\Big) \phi^n_e 
\\
& \le - \sigma_\U \Big(\sumek \ake \vert \wwk -
\wwkve\vert^2\Big) \phi^n_K 
\\
& \quad + {\cal O}(h) \Big(\sumek \ake \vert \wwk - \wwkve\vert\Big) \vert \vert \nabla \phi
\vert\vert_{L^{\infty}(]t^n,t^{n+1}[\times K)}, 
  \end{aligned}
 \ee
where $\sigma_\U$ denotes some convexity-like modulus of $\U$: $\U''(\u) \ge \sigma_\U > 0,~\hbox{for all
} \u \in (m,M)$ where the bounds $m,M$ were introduced in (\ref{CFL}) in agreement with the maximum
principle (\ref{princmax}).
\end{lemma}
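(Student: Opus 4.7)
Both estimates hinge on a second--order expansion of $\U$ around the relevant averaged state, combined with the convex--combination identities $\wk = \sumek \ake \wkve$ (by definition~\eqref{defwk}) and $\wwk = \sumek \ake \wwkve$ (Lemma~\ref{lemwwkve}). Two quantitative ingredients will be used throughout. First, $\v_0 \in W^{2,\infty}$ combined with~\eqref{defve} gives $|\ve - v_{e'}| = O(h)$ whenever $e$ and $e'$ belong to the same cell $K$; in view of the smoothness~\eqref{smoothC} of $\cz$ and the uniform sup--norm bound of Proposition~\ref{propprincmax}, this yields $|\wkve - \wk| = O(h)$, while $|\wwkve - \wwk|$ is at least uniformly bounded. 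Second, since $\phi\in W^{1,\infty}$ and the diameter of $K$ is $O(h)$, we have $|\phi^n_e - \phi^n_K| = O(h)\,\vert\vert\nabla\phi\vert\vert_{L^\infty(]t^n,t^{n+1}[\times K)}$, where $\phi^n_K$ is the convex average~\eqref{phik}.

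For~\eqref{estiwkwkve}, I would Taylor--expand $\U(\wkve)-\U(\wk) = \U'(\wk)(\wkve-\wk) + r_e$ with $|r_e|\le C|\wkve-\wk|^2 = O(h^2)$. Multiplying by $\ake\phi^n_e$ and summing over $e\in\partial K$, the linear contribution equals $\U'(\wk)\sumek\ake(\wkve-\wk)(\phi^n_e-\phi^n_K)$ thanks to $\sumek\ake(\wkve-\wk)=0$ and $\phi^n_K = \sumek\ake\phi^n_e$; it is therefore $O(h)\cdot O(h) = O(h^2)\vert\vert\nabla\phi\vert\vert_{L^\infty}$, while the quadratic remainder directly contributes $O(h^2)\vert\vert\phi\vert\vert_{L^\infty}$, whence~\eqref{estiwkwkve}.

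For~\eqref{estiwwkwwkve}, the maximum principle~\eqref{princmax} and~\eqref{supnormw} confine every subcell state to a compact set on which $\U''\ge \sigma_\U > 0$, so strict convexity gives
\[
\U(\wwk) - \U(\wwkve) \le -\U'(\wwk)(\wwkve-\wwk) - \sigma_\U |\wwkve-\wwk|^2,
\]
the numerical factor $\tfrac12$ being absorbed into $\sigma_\U$. Multiplying by $\ake\phi^n_e$ and summing, I split $\phi^n_e = \phi^n_K + (\phi^n_e-\phi^n_K)$: the $\phi^n_K$ part of the quadratic piece produces the announced $-\sigma_\U(\sumek\ake|\wwkve-\wwk|^2)\phi^n_K$, while the remaining $(\phi^n_e-\phi^n_K)$ part is $O(h)\vert\vert\nabla\phi\vert\vert_{L^\infty}\sumek\ake|\wwkve-\wwk|^2$, which is absorbed into the announced first--order error after using that $|\wwkve-\wwk|$ is uniformly bounded. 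The linear piece, after invoking $\sumek\ake(\wwkve-\wwk)=0$, becomes $\U'(\wwk)\sumek\ake(\wwkve-\wwk)(\phi^n_K-\phi^n_e)$ and is likewise bounded by $O(h)\vert\vert\nabla\phi\vert\vert_{L^\infty}\sumek\ake|\wwkve-\wwk|$, matching the last term of~\eqref{estiwwkwwkve}.

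The main technical subtlety is the second estimate: the strict convexity bound with a fixed $\sigma_\U > 0$ is what generates the dissipation term on the right--hand side of~\eqref{estiwwkwwkve}, and the uniformity of $\sigma_\U$ in $h$ and $n$ is indispensable. This is precisely where the discrete maximum principle of Proposition~\ref{propprincmax} enters, as it keeps all states in a fixed compact range and so prevents the dissipation constant from degenerating with $h$; the first estimate, by contrast, needs only the smoothness of $v_0$ and of $\phi$ together with the identities satisfied by the convex averages.
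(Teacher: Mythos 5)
Your proof is correct and follows essentially the same route as the paper's: both rest on the convex-combination identities $\wk=\sumek\ake\wkve$ and $\wwk=\sumek\ake\wwkve$, the oscillation bound $|\wkve-\wk|=O(h)$ coming from $v_0\in W^{2,\infty}$, the splitting $\phi^n_e=\phi^n_K+(\phi^n_e-\phi^n_K)$, and a second-order Taylor expansion of $\U$ whose quadratic term (killed in the linear part by the convex average) yields $O(h^2)$ in the first estimate and the $-\sigma_\U$ dissipation term in the second. The only difference is cosmetic ordering — you Taylor-expand per edge before splitting $\phi^n_e$, while the paper splits first and Taylor-expands only the $\phi^n_K$-weighted sum, using Lipschitz continuity of $\U$ for the remainder — and the conclusions coincide.
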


\begin{proof}[Proof of Lemma \ref{lemdwis}.]
Let $e$ be any edge in $\Th$ and $K,K_e$ the associated pair of adjacent polyhedra.
Multiplying the subcell entropy inequality (\ref{subcellen}) valid for $K$ by $\ake \volk$ and the
companion inequality for $K_e$ by $\ake \vert K_e\vert$, we get 
$$
\begin{aligned}
 \ake \volk \Big(\U(\wwkve) &- \U(\wkve)\Big)  + ~ \akee \volke \Big(\U(\wwkeve) - \U(\wkeve)\Big) \\
& -  \tau \Big(\F(\wkve,\ve)\cdot \nuek + \F(\wkeve,\ve) \cdot \nueke  \Big)\vole \le 0, 
\end{aligned}
$$
thanks to the conservation property (\ref{egekconse}) satisfied by the numerical entropy flux--functions.
Multiplying the above inequality by the discrete test function $\phi^n_e$ (\ref{phie}), then summing over
the edges $e$ in $\partial K$ and the polyhedra $K$ in $\Th$ yields
$$
\begin{aligned}
&\sumkT \sumek \ake  \Big(\U(\wwkve) - \U(\wkve)\Big) \phi^n_e \volk  \\ 
& +~\sumkT \sumek \akee  \Big(\U(\wwkeve) - \U(\wkeve)\Big) \phi^n_e \volke \\
& -\tau \sumkT \sumek \Big(\F(\wkve,\ve)\cdot \nuek + \F(\wkeve,\ve) \cdot \nueke \Big)\phi^n_e \vole \le
0.
\end{aligned}
$$
To conclude the proof, we notice the following two identities
$$
\begin{aligned}
& \sumkT \sumek \ake \Big(\U(\wwkve) - \U(\wkve)\Big) \phi^n_e \volk
\\ 
& = \sumkT \sumek \akee
\Big(\U(\wwkeve) - \U(\wkeve)\Big) \phi^n_e \volke,
\end{aligned}
$$
and
$$
\sumkT \sumek \F(\wkeve,\ve) \cdot \nueke \phi^n_e \vole = \sumkT \sumek \F(\wkve,\ve) \cdot \nuek
\phi^n_e \vole.
$$
\end{proof}

\begin{proof}[Proof of Proposition \ref{propdwit}.]
We begin with the discrete inequality (\ref{dwis}) of Lemma \ref{lemdwis} and specifically considerb 
the flux balance 
$
\sumkT \sumek  \F(\wkve,\ve)\cdot \nuek \phi^n_e \vole.
$
Our purpose is to shift the mathematical expressions under consideration from the $\w$ to the $\u$
variable. Hence let us write $\F(\wkve,\ve) = \F(\w(\uk,\ve),\ve) = \Q(\uk,\ve)$ with $\Q(\u,\v) $ the
exact entropy flux introduced in (\ref{entropyflux}), which we repeat component-wise as 
$
\Q_i(\u,\v) = \int^\u \U'(\cz(\theta,\v))\partial_\theta \ci(\theta,\v) d\theta, \quad 1\le i \le d.
$
We then recast the flux balance as
\be
\label{recastfb}
\begin{aligned}
& \sumek  \F(\wkve,\ve) \cdot \nuek  \phi^n_e \vole 
\\
& =  \Q(\uk,\vk) \cdot \sumek\phi^n_e \vole \nuek  + \sumek \Big(\Q(\uk,\ve)- \Q(\uk,\vk)  \Big)\cdot \nuek \phi^n_e \vole,
\end{aligned}
\ee
where the average of the states $\ve$ is defined by 
$
\vk = \sumek \ake \ve.
$
In view of a representation formula for $\nabla \phi$ (similar to the one in Remark~\ref{gradve} derived for
$\nabla v$), the average form (\ref{phie}) for $\phi^n_e$ yields 
\be
\sumek\phi^n_e \vole \nuek = \frac{1}{\tau} \int_{t^n}^{t^{n+1}}\!\!\Big(\sumek \int_e \phi(x,t) \nuek dx
\Big)dt =\frac{1}{\tau} \int_{t^n}^{t^{n+1}}\!\!\! \int_K \nabla \phi(x,t) dx dt,
\ee
so that, from (\ref{recastfb}),
\be
\label{tempfb}
\begin{aligned}
& \sumek \F(\wkve,\ve) \cdot \nuek \phi^n_e \vole
\\
& = \displaystyle\frac{1}{\tau} \int_{t^n}^{t^{n+1}}\int_K
\Q(\uk,\vk) \cdot \nabla \phi(x,t) dx dt  + \sumek \Big(\Q(\uk,\ve)- \Q(\uk,\vk)  \Big)\cdot \nuek \phi^n_e \vole.
\end{aligned}
\ee
The treatment of the last remaining discrete term relies on the following identity:
$$
\left.
\begin{array}{lll}
\displaystyle
\Q(\uk,\ve)- \Q(\uk,\vk) =  \int_0^1 \partial_v \Q(\uk,\vk + s(\ve-\vk)) ds~ (\ve -\vk)
\end{array}
\right.
$$
which leads us to rewrite (\ref{tempfb}): 
\be
\label{interfb}
\left.
\begin{array}{lll}
& \sumek \F(\wkve,\ve) \cdot \nuek \phi^n_e \vole - \displaystyle\frac{1}{\tau} \int_{t^n}^{t^{n+1}}\int_K
\Q(\uk,\vk) \cdot \nabla \phi(x,t) dx dt
\\
& =  \partial_v\Q(\uk,\vk): \Big(\sumek \phi^n_e (\ve-\vk)\otimes\nuek\vole\Big) 
\\
& \quad + ~ \sumek \phi^n_e \Big(\int_0^1 \big(\partial_v \Q(\uk,\vk + s(\ve-\vk))-\partial_v \Q(\uk,\vk)\big)
ds \Big) : \Big((\ve-\vk)\otimes\nuek\Big) \vole.
\end{array}
\right.
\ee
The matrix $(\ve-\vk)\otimes\nuek\vole$ with size $L\times d$ appears as a discrete representation for the
continuous function $\nabla v$.
The first term in the above right--hand side is rewritten as:
\be
\label{ftrhsinterfb}
\begin{aligned}
& \partial_v\Q(\uk,\vk): \Big(\sumek  \phi^n_e (\ve-\vk)\otimes\nuek \vole \Big)   
\\
&=  \phi^n_K ~\partial_v\Q(\uk,\vk): \Big(\sumek (\ve-\vk)\otimes\nuek \vole\Big) \\ 
& \quad + \Big(\sumek ( \phi^n_e - \phi^n_K) \partial_v\Q(\uk,\vk): \big((\ve-\vk)\otimes\nuek\big) \vole \Big),
\end{aligned}
\ee
where the discrete flux function $\phi^n_K$ is obtained by averaging:
$
\phi^n_K = \sumek \ake \phi^n_e.
$
On one hand, owing to the identity $\sumek (\ve-\vk)\otimes\nuek\vole  = \sumek\ve\otimes\nuek\vole$
we get
\be
\label{contrepre}
\begin{aligned}
& \partial_v\Q(\uk,\vk):\Big(\sumek (\ve-\vk)\otimes\nuek \vole\Big) 
= \partial_v\Q(\uk,\vk): 
 \left(\displaystyle \frac{1}{\tau}\int_{t^n}^{t^{n+1}}\!\!\!\int_K \nabla v(x) dt dx\right),
\end{aligned}
 \ee
again thanks to the representation formula in Remark~\ref{gradve} (for $\nabla v$). On the other hand, the latter 
error term in (\ref{ftrhsinterfb}) is described by 
\be
\label{lasterrorterm}
\begin{aligned}
& \Big\vert \sumek ( \phi^n_e - \phi^n_K)
\partial_v\Q(\uk,\vk): ((\ve-\vk)\otimes\nuek)\vole \Big\vert 
\\
& \le {\cal O}(1) \displaystyle \sup_{e\in\partial K} \vert(\phi^n_e - \phi^n_K)(\ve - \vk)\vert) p_K  
\\
& \le {\cal O}(h_K^2) \displaystyle \vert\vert \nabla \phi \vert \vert_{L^\infty(]t^n,t^{n+1}[\times K)} {
p_K} 
 \le {\cal O}(h_K)~ \displaystyle \vert\vert \nabla \phi \vert\vert_{L^\infty(]t^n,t^{n+1}[\times K)}
\volk.
\end{aligned}
\ee
Here, we have successively used the sup--norm estimate (\ref{princmax}) satisfied by $\uh$, the
definition of the perimeter $\pk$ of $K$, the estimate
\be
\label{oscilvevk}
\vert \ve-\vk\vert \le \displaystyle \sum_{e'\in \partial K}\ake \vert \ve-v_{e'}\vert \le {\cal O}(h_K)
\ee 
from the definition of $\vk$ and the regularity property $v_0\in W^{2,\infty}$,
a similar estimate $\vert \phi^n_e-\phi^n_k\vert \le {\cal O}(h_K)$ and finally the non degeneracy
assumption (\ref{nondegenere}) on the triangulation $\Th$. Involving
\eqref{contrepre}-\eqref{lasterrorterm}, the identity \eqref{ftrhsinterfb} yields the following estimate
\be
\label{ftrhfb}
\begin{aligned}
& \Big\vert \partial_v\Q(\uk,\vk) : \Big(\sumek \phi^n_e (\ve-\vk)\otimes\nuek\vole \Big)
- 
\displaystyle \frac{1}{\tau}\int_{t^n}^{t^{n+1}}\int_K \phi^n_K \partial_v\Q(\uk,\vk) :
\nabla v(x) dt dx
\Big\vert \\
& \le {\cal O}(h_K) ~ \displaystyle \vert\vert \nabla \phi \vert\vert_{L^\infty(]t^n,t^{n+1}[\times K)}
\volk.
\end{aligned}
\ee

For the final error term in the flux balance (\ref{interfb}), we have the following
bounds:
$$
\begin{aligned}
&\Big\vert \sumek \phi^n_e \displaystyle \int_0^1 (\partial_v \Q(\uk,\vk + s(\ve-\vk))-\partial_v
\Q(\uk,\vk)) ds : ((\ve-\vk)\otimes\nuek) \vole\Big\vert \\
& \le {\cal O}(1) \displaystyle \sup_{e\in \partial K} \vert \ve-\vk\vert^2 \Big( p_K \vert\vert
\phi\vert\vert_{L^\infty(]t^n,t^{n+1}[\times K)}\Big)\\
& \le {\cal O}(h_K)\vert\vert \phi\vert\vert_{L^\infty(]t^n,t^{n+1}[\times K)} \volk,
\end{aligned}
$$
where we have used the regularity of the entropy flux $\Q$, the sup--norm
estimate (\ref{princmax}), the estimate (\ref{oscilvevk}) satisfied by
$\vert\ve-\vk\vert$, and the non-degeneracy assumption (\ref{nondegenere}) on the triangulation
$\Th$.

To summarize, we have obtained the estimate for the flux balance on a single cell:
\be
\label{lastfb}
\begin{aligned}
 \Big\vert 
& \displaystyle\frac{1}{\tau} \int_{t^n}^{t^{n+1}}\!\!\int_K \Big(\Q(\uk,\vk) \cdot \nabla \phi(x,t) 
 +  \phi^n_K \partial_v\Q(\uk,\vk) :  \nabla v(x) dt dx \Big)\\
& \hskip2.cm -  \sumek  \F(\wkve,\ve) \cdot \nuek  \phi^n_e \vole \Big\vert  \\
&  \le {\cal O}(h) \vert\vert \phi\vert\vert_{W^{1,\infty}(]t^n,t^{n+1}[\times K)} \volk.
\end{aligned}
 \ee
From the discrete weak entropy inequality \eqref{dwis} we recall that 
$$
 \sumek \ake  \Big(\U(\wwkve) - \U(\wkve)\Big) \phi^n_e \volk -  
\tau \sumkT \sumek  \F(\wkve,\ve)\cdot \nuek \phi^n_e \vole \le 0,
$$
the sum of \eqref{lastfb} over all cells $K$ on the triangulation $\Th$ gives
$$
\begin{aligned}
& \sumkT \sumek \ake  \Big(\U(\wwkve)  - \U(\wkve)\Big) \phi^n_e \volk   \\
& - \displaystyle \int_{t^n}^{t^{n+1}}\Big(\sumkT \int_K \Q(\uk,\vk)\cdot\nabla \phi+ \phi
\partial_v\Q(\uk,\vk):\nabla v dx\Big)dt \\
& \le {\cal O}(h) \displaystyle \tau \sumkT \vert\vert \phi\vert\vert_{W^{1,\infty}(]t^n,t^{n+1}[\times
K)} \volk
\le {\cal O}(h) \tau \vert\vert \phi\vert\vert_{W^{1,\infty}(]t^n,t^{n+1}[\times \R^d)}\vert
\supp(\phi)\vert.
\end{aligned}
$$
\end{proof}

\begin{proof}[Proof of Lemma \ref{lemerrort}]
We first establish the estimate (\ref{estiwkwkve}) and consider the following decomposition
involving again the $\{\ake\}_{\{e,e\in\partial K\}}$-average $\phi^n_K$ of the $\phi^n_e$ (\ref{phik}):
$$
\begin{aligned}
& \sumek \ake \Big(\U(\wkve)-\U(\wk)\Big) \phi^n_e
\\
 &= \sumek \ake \Big(\U(\wkve)- \U(\wk)\Big)
(\phi^n_e-\phi^n_K) + \phi_K^n~\Big(\sumek \ake \U(\wkve)-\U(\wk)\Big),
\end{aligned}
$$
from which we deduce the following bound:
\be
\label{intwkwkve}
\begin{aligned}
& \sumek \ake \Big(\U(\wkve)-\U(\wk)\Big) \phi^n_e \\
& \le {\cal O}(h_K) \vert\vert \nabla\phi\vert\vert_{L^{\infty}(]t^n,t^{n+1}[\times K)} 
\displaystyle \sup_{e\in\partial K}\vert \wkve-\wk\vert \\
& \quad + ~ {\cal O}(1) ~ \vert\vert \phi\vert\vert_{L^{\infty}(]t^n,t^{n+1}[\times K)} \Big(\sumek \ake
\U(\wkve)-\U(\wk)\Big),
\end{aligned}
\ee
in view of the sup-norm estimate (\ref{princmax}) satisfied by $\uh$, the estimate
$\vert\phi^n_e-\phi^n_K\vert\le {\cal O}(h_K)$ and the convexity of the entropy $\U(\w)$.
The first error term in (\ref{intwkwkve}) is given the following bound:
\be
\label{oscilwkve}
\begin{aligned}
\vert \wkve-\wk \vert &\le \displaystyle \sum_{e'\in\partial K} \alpha_{K,e'}
\big\vert\cz(\uk,v_{e'})-\cz(\uk,\ve)\big\vert \\
&\le {\cal O}(1) \displaystyle \sup_{e'\in \partial K} \vert \v_{e'}-\ve\vert \le {\cal O}(h_K),
\end{aligned}
\ee
while the second one may be handled as follows:
\be
\label{osciluwkve}
\begin{aligned}
& \sumek \ake \U(\wkve)-\U(\wk) = \U'(\wk)\Big(\sumek \ake \wkve - \wk\Big) \\
&+ \sumek \ake \int_0^1 \U''(\wkve + s(\wk-\wkve))ds (\wkve-\wk)^2\\
&\le {\cal O}(1) \displaystyle \sup_{e\in\partial K} \vert\wkve-\wk\vert^2 ~\le ~ {\cal O}(h_K^2),
\end{aligned}
\ee
in view of  (\ref{defwk}) $\wk =\sum_{e\in\partial K} \ake \wkve$ and the estimate
(\ref{oscilwkve}). Gathering bounds (\ref{oscilwkve}) and (\ref{osciluwkve}) yield the expected estimate
(\ref{estiwkwkve}) in Lemma \ref{lemerrort}.

We now derive the companion estimate (\ref{estiwwkwwkve}), by starting from the decomposition 
$$
\begin{aligned}
& \sumek \ake \Big(\U(\wwk)-\U(\wwkve)\Big) \phi^n_e 
\\
& = ~\phi^n_K ~\Big(\U(\wwk)-\sumek \ake
\U(\wwkve)\Big)
+ \sumek \ake \Big(\U(\wwk)-\U(\wwkve)\Big) \Big(\phi^n_e-\phi^n_K\Big),
 \end{aligned}
$$
and observing, on one hand, 
$$
\begin{aligned}
& \Big\vert \sumek \ake \Big( \U(\wwk)-\U(\wwkve)\Big) \Big(\phi^n_e-\phi^n_K\Big)\Big\vert 
\\
& \le {\cal O}(1) \displaystyle\sum_{e\in\partial K} \ake \vert\phi^n_e-\phi^n_K\vert \vert
\wwkve-\wwk\vert \\
& \le {\cal O}(h_K) \Big(\sumek \ake \vert \wwkve-\wwk\vert \Big)\vert\vert \nabla
\phi\vert\vert_{L^\infty(]t^n,t^{n+1}[\times K)} 
 \end{aligned}
$$
and, on the other hand, 
$$
\begin{aligned}
& \sumek \ake \U(\wwkve)-\U(\wwk) 
\\
&= \U'(\wwk)\Big(\sumek \ake \wwkve - \wwk\Big) 
\\
& \quad + \sumek \ake \int_0^1 \U''(\wwkve + s(\wwk-\wwkve))ds (\wwkve-\wwk)^2.
\end{aligned}
$$
Finally, in view of the convex decomposition (\ref{convwwk}) stating $\wwk = \sum_{e\in\partial K} \ake
\wwkve$
$$
\U(\wwk) - \sumek \ake \U(\wwkve) \le  - \sigma_\U \sumek \ake \vert \wwkve-\wwk\vert^2, 
$$
where $\sigma_\U$ denotes the convexity like-modulus of $\U$ introduced in Lemma \ref{lemerrort}. This
concludes the proof.
\end{proof}


\subsection{Entropy dissipation rate and strong convergence}
\label{SectConv}

The proposed estimates obtained in Lemma~\ref{lemerrort} deserve a few comments. Plugging first estimate
(\ref{estiwkwkve}) in (\ref{errordei}) will be easily seen to yield the following upper-bound
$$
\begin{aligned}\sumnn \sumkT\sumek \ake  \Big(\U(\wkve)&-\U(\wk) \Big) \phi^n_e \volk
 \le {\cal O}(h) \vert \vert \phi \vert\vert_{W^{1,\infty}(\R_+\times \R^d)}\vert \supp(\phi)\vert
\end{aligned}
$$
that obviously suffices to conclude. By contrast and turning considering (\ref{estiwwkwwkve}), a crude
upper-bound based on the sup-norm estimate (\ref{supnormw}), say
$$\sumek \ake \Big(\U(\wwk)-\U(\wwkve)\Big) \phi^n_e \le {\cal O}(h) \vert \vert \phi
\vert\vert_{W^{1,\infty}(]t^n,t^{n+1}[\times K)}$$
would result in the useless estimate 
$$
\begin{aligned}\sumnn \sumkT\sumek \ake  \Big(\U(\wwk)-&\U(\wwkve)\Big) \phi^n_e \volk
\le {\cal O}(1) \vert \vert \phi \vert\vert_{W^{1,\infty}(\R_+\times \R^d)}\vert
\supp(\phi)\vert.
\end{aligned}
$$
Proving that the error term of concern in (\ref{errordei}) actually vanishes with $h$ requires therefore
in turn a sharper control in (\ref{estiwwkwwkve}) of the oscillations of the $\wwkve$ around their mean
value $\wwk$. Such a control over these discrete oscillations results from a sharp evaluation of the
discrete entropy rate of dissipation.

\begin{proposition}
\label{propweakesti}
Let $T>0$ be any fixed time and let $N_T\in\mathbb{N}$ be the floor of $T/\tau$ we denote
$[T/\tau]$.
Then, for any (time independent) non negative test function $\psi\in {\cal D}(\R^d)$, the finite
volume approximation (\ref{defwkve})--(\ref{defwwk}) obeys the following estimate on the discrete
oscillations:
\be
\label{weakesti}
\sum_{n=0}^{N_T} \sumkT \sumek \ake \vert \wwk - \wwkve\vert ^2 \psi_K \volk \le {\cal O}(1),
\ee
where $\psi_K$ reads 
$
\psi_K=\sumek \ake \psi_e, \qquad \psi_e = \frac{1}{\vole}\int_e \psi(x)dx.
$
\end{proposition}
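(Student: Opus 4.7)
The strategy is to apply the discrete entropy inequality (\ref{dwit}) from Proposition~\ref{propdwit} with a \emph{uniformly} convex entropy such as $\U(\w)=\w^2/2$ (for which $\sigma_\U=1$), taking as test function $\phi(t,x)=\chi(t)\psi(x)$ with $\chi\in\mathcal{D}(\R_+^*)$ a smooth nonnegative cutoff equal to $1$ on $[\tau,T]$ and supported in $[\tau/2,T+1]$. The key step is to rewrite the discrete time-difference term in (\ref{dwit}) via the decomposition (\ref{tddecomp}): the first piece telescopes in time, while the second piece is precisely where the Jensen-type inequality (\ref{estiwwkwwkve}) of Lemma~\ref{lemerrort} produces the dissipation $-\sigma_\U \sumek \ake|\wwk-\wwkve|^2 \phi_K^n$ that we wish to control. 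Summing over $n=0,\dots,N_T$ and $K\in\Th$, and moving this dissipation term to the left-hand side, one obtains a master inequality of the form
\begin{equation*}
\sigma_\U \sum_{n=0}^{N_T}\sumkT\sumek \ake|\wwk-\wwkve|^2 \phi_K^n \volk \;\le\; T_1+T_2+T_3+T_4,
\end{equation*}
where $T_1$ is the telescoped time-difference, $T_2$ gathers the exact entropy flux contributions from (\ref{dwit}), $T_3$ collects the estimate (\ref{estiwkwkve}) and the residual $\mathcal{O}(h)$ from (\ref{dwit}), and $T_4$ is the linear-in-$|\wwk-\wwkve|$ perturbation appearing in (\ref{estiwwkwwkve}).

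Each $T_i$ is then bounded uniformly in $h$. Since $\phi$ is separable and $\chi\equiv1$ on $[0,T]$, an Abel summation reduces $T_1$ to boundary contributions bounded by $2\|\U(w_h)\|_\infty \|\psi\|_{L^1}$, hence $\mathcal{O}(1)$ by the sup-norm estimate (\ref{supnormw}). For $T_2$, note that $\Q(u_h,v(x))\cdot \nabla\phi + \phi\,\partial_v\Q(u_h,v(x))\!:\!\nabla v$ is uniformly bounded on the compact support of $\phi$ thanks to the maximum principle of Proposition~\ref{propprincmax}, the regularity $v_0\in W^{2,\infty}$, and the smoothness of $\Q$; integrating on $[0,T+1]\times \supp\psi$ yields $|T_2|=\mathcal{O}(1)$. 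The estimate (\ref{estiwkwkve}) gives $T_3=\mathcal{O}(h)\cdot N_T\cdot\tau\cdot|\supp\psi|=\mathcal{O}(1)$ using $\tau\sim h$ and $N_T\tau\le T+1$.

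The delicate term is $T_4$, which (after absorbing the $\mathcal{O}(h)$ prefactor and $\|\nabla\phi\|_{L^\infty}$) has the form $C h \sum_{n,K}\bigl(\sumek\ake|\wwk-\wwkve|\bigr)\volk$. Applying Cauchy--Schwarz in the edge sum (using $\sumek\ake=1$) and then in the space-time sum yields
\begin{equation*}
T_4 \;\le\; Ch\,\Bigl(\sum_{n,K,e}\ake|\wwk-\wwkve|^2\phi_K^n\volk\Bigr)^{1/2}\Bigl(\sum_{n,K}\volk\Bigr)^{1/2},
\end{equation*}
and the second factor is $\mathcal{O}(\sqrt{N_T |\supp\psi|})=\mathcal{O}(h^{-1/2})$. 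Young's inequality with a parameter $\varepsilon<\sigma_\U$ then gives $T_4\le \varepsilon\sum_{n,K,e}\ake|\wwk-\wwkve|^2\phi_K^n\volk + C_\varepsilon h\cdot T\cdot|\supp\psi|$, absorbing the dissipation-like part into the left-hand side and leaving an $\mathcal{O}(1)$ remainder.

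The main obstacle is precisely this absorption step for $T_4$: the perturbation in (\ref{estiwwkwwkve}) is linear in $|\wwk-\wwkve|$ whereas the dissipation is quadratic, so the naive sup-norm bound (mentioned in the paragraph preceding the proposition) is fatally too coarse. The Cauchy--Schwarz/Young manoeuvre turns this linear-versus-quadratic mismatch into a harmless $\mathcal{O}(1)$ term, crucially relying on the scaling $\tau\sim h$ enforced by the CFL condition~(\ref{CFL}) and on the uniform non-degeneracy~(\ref{dualnondegen})--(\ref{ehnondeg}) of the primal-dual meshes. Once this is done, combining the bounds on $T_1,\dots,T_4$ and observing that $\phi_K^n=\psi_K$ for $0\le n\le N_T-1$ delivers (\ref{weakesti}).
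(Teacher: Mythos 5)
Your overall architecture coincides with the paper's: start from the discrete-in-time inequality (\ref{dwit}), insert the decomposition (\ref{tddecomp}), telescope the term $\sumkT\bigl(\U(\wwk)-\U(\wk)\bigr)\phi^n_K\volk$ in time, extract the quadratic dissipation from (\ref{estiwwkwwkve}), and bound the flux and the remainder of (\ref{estiwkwkve}) essentially as you do for $T_1$, $T_2$, $T_3$. The divergence --- and the gap --- is in $T_4$. Your weighted Cauchy--Schwarz step is not valid as written: to produce the factor $\bigl(\sum_{n,K,e}\ake|\wwk-\wwkve|^2\phi^n_K\volk\bigr)^{1/2}$ on the right, the companion factor must be $\bigl(\sum_{n,K}\volk\,\|\nabla\phi\|^2_{L^\infty(K)}/\phi^n_K\bigr)^{1/2}$, not $\bigl(\sum_{n,K}\volk\bigr)^{1/2}$, and $1/\phi^n_K$ is unbounded precisely where $\nabla\psi$ is active, namely near $\partial\,\supp\psi$ where $\psi$ vanishes. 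If instead you drop the weight $\phi^n_K$ from the first factor, the Cauchy--Schwarz inequality becomes correct but the resulting unweighted quadratic sum can no longer be absorbed into the left-hand side, which carries the weight $\psi_K$. Either way the absorption fails for a general nonnegative $\psi\in\calD(\R^d)$.

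The good news is that no absorption is needed: you have misread the cautionary remark preceding the proposition. The crude sup-norm bound $\sumek\ake|\wwk-\wwkve|\le{\cal O}(1)$ (from (\ref{supnormw})) applied to the linear remainder of (\ref{estiwwkwwkve}) gives $T_4\le{\cal O}(h)\sum_{n}\sum_{K}\volk\,\|\nabla\psi\|_{L^\infty(K)}\le{\cal O}(h)\,(N_T+1)\,|\supp\psi|\,\|\nabla\psi\|_{L^\infty}={\cal O}(1)$, because $N_T\tau\le T$ and $\tau\gtrsim h$ by the mesh non-degeneracy; this is exactly what the paper does. The remark about the crude bound being ``useless'' concerns the error term (\ref{errordei}) in the limit passage, where one needs $o(1)$ rather than ${\cal O}(1)$; for the present proposition ${\cal O}(1)$ is the goal, so the crude bound suffices. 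Your Cauchy--Schwarz manoeuvre is the right tool, but it belongs to the proof of Corollary \ref{corentine}, where the paper applies the already-established estimate (\ref{weakesti}) with $\psi=\chi_\phi$, the characteristic function of a fixed compact neighbourhood of the support of the test function --- a weight bounded below by $1$ there, which is what removes the degeneracy that sinks your version. With $T_4$ handled by the crude bound, the rest of your argument goes through and reproduces the paper's proof.
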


Equipped with (\ref{weakesti}) we obtain the following entropy dissipation rate.

\begin{corollary}
\label{corentine}
The sequence $\uh$ satisfy the entropy like inequality
 \be
 \label{cwts}
\displaystyle\iint_{\R^+\times \R^d}\!\!\!\!\! \U(\cz(\uh,\v)) \partial_t \phi(x,t) +
 \Q(\uh,\v) \cdot \nabla \phi +\phi \partial_v\Q(\uh,\v) \!:\! \nabla v dx dt  \ge 
{\cal O}(h^{1/2}),
\ee
for any (smooth) convex entropy pair $(\U,\Q) : \R \to \R\times \R^d$ introduced in \eqref{entropyu}
and \eqref{entropyflux}.
\end{corollary}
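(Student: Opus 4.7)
The plan is to sum the discrete-in-time entropy inequality of Proposition~\ref{propdwit} over the time levels $n=0,\ldots,N_T$ (a finite range since $\phi\in\mathcal{D}(\R^*_+\times\R^d)$) and to substitute the decomposition~\eqref{tddecomp} of the leading cell-by-cell difference. This substitution rewrites the discrete entropy production as a genuine time-increment term $\sum_{n,K}(\U(\wwk)-\U(\wk))\phi^n_K\volk$ plus the two error contributions singled out in~\eqref{errordei}; the right-hand side coming from~\eqref{dwit} accumulates to at most $\mathcal{O}(h)\,T\,\vert\vert\phi\vert\vert_{W^{1,\infty}}\,\vert\supp(\phi)\vert=\mathcal{O}(h)$, which is compatible with the target bound.

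Next I would dispatch the two error contributions using Lemma~\ref{lemerrort}. The first one, $\sumnn\sumkT\sumek\ake(\U(\wkve)-\U(\wk))\phi^n_e\volk$, is controlled cell by cell by the $\mathcal{O}(h_K^2)$ bound \eqref{estiwkwkve}; since the spatial sum over cells intersecting $\supp\phi$ contributes $\mathcal{O}(1)$ and the time sum contributes $N_T=\mathcal{O}(h^{-1})$, the global contribution is $\mathcal{O}(h)$. The main obstacle is the second contribution, $\sumnn\sumkT\sumek\ake(\U(\wwk)-\U(\wwkve))\phi^n_e\volk$. Its bound \eqref{estiwwkwwkve} splits into a quadratic piece $-\sigma_\U\sumek\ake|\wwk-\wwkve|^2\phi^n_K$, which has the favorable sign for our inequality and may be safely dropped, and a linear piece of order $\mathcal{O}(h)\sumek\ake|\wwk-\wwkve|\,\vert\vert\nabla\phi\vert\vert_\infty$ which really requires Proposition~\ref{propweakesti} to be absorbed.

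To treat this linear term I would select a non-negative cutoff $\psi\in\mathcal{D}(\R^d)$ identically equal to $1$ on the spatial projection of $\supp\phi$. Since $\vert\vert\nabla\phi\vert\vert_{L^\infty((t^n,t^{n+1})\times K)}$ automatically restricts the summation to those cells near $\supp\phi$, the Cauchy--Schwarz inequality yields
\[
\sum_{n,K,e}\ake|\wwk-\wwkve|\,\volk \le \Bigl(\sum_{n,K,e}\ake|\wwk-\wwkve|^2\psi_K\volk\Bigr)^{1/2}\Bigl(\sum_{n,K,e}\ake\volk\Bigr)^{1/2},
\]
in which the first factor is uniformly $\mathcal{O}(1)$ by the weak estimate~\eqref{weakesti}, while the second factor is a sum of $\mathcal{O}(N_T\cdot\vert\supp\psi\vert)=\mathcal{O}(h^{-1})$ terms, so its square root is $\mathcal{O}(h^{-1/2})$. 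Multiplying by the prefactor $\mathcal{O}(h)$ yields precisely the $\mathcal{O}(h^{1/2})$ rate appearing on the right-hand side of~\eqref{cwts}; this is the step where the sharp entropy dissipation estimate of Section~\ref{SectConv} is truly used.

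Finally, I would perform an Abel summation by parts on the main time-increment term, turning $\sum_n(\U(\wwk)-\U(\wk))\phi^n_K\volk$ into $-\sum_n\U(\wk)(\phi^n_K-\phi^{n-1}_K)\volk$, with no boundary contribution since $\phi$ is compactly supported away from $t=0$ and for large times. Using the smoothness of $\cz$ together with the $W^{2,\infty}$ regularity of $v_0$ (so that $\vk$ approximates $v$ pointwise up to $\mathcal{O}(h)$) and the uniform $L^\infty$ bound of Proposition~\ref{propprincmax}, this discrete time sum approximates $-\iint\U(\cz(\uh,\v))\,\partial_t\phi\,dx\,dt$ up to $\mathcal{O}(h)$. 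Collecting these bounds with the already-identified integrals $\iint\Q(\uh,v)\cdot\nabla\phi+\phi\,\partial_v\Q:\nabla v\,dx\,dt$ of Proposition~\ref{propdwit} yields the announced inequality~\eqref{cwts} with the $\mathcal{O}(h^{1/2})$ remainder.
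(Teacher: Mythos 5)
Your proposal is correct and follows essentially the same route as the paper's own proof: sum the inequality of Proposition~\ref{propdwit} over time, insert the decomposition \eqref{tddecomp}, control the two error terms of \eqref{errordei} via Lemma~\ref{lemerrort} (dropping the favorably signed quadratic piece), absorb the linear oscillation term by Cauchy--Schwarz combined with the dissipation estimate \eqref{weakesti} to produce the ${\cal O}(h^{1/2})$ rate, and finish with an Abel summation in time. The only differences are cosmetic bookkeeping: you keep the ${\cal O}(h)$ prefactor outside and get factors ${\cal O}(1)\times{\cal O}(h^{-1/2})$ where the paper weights its sums by $\tau$ and gets ${\cal O}(h^{1/2})\times{\cal O}(1)$, and you use a smooth cutoff $\psi\ge\chi_\phi$ where the paper (slightly abusively) takes $\psi=\chi_\phi$ itself.
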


Equipped with the above inequality valid for any entropy pair $(\U,\Q)$, we easily deduce that the
Young measure $\mu=\mu_{t,x}$ associated with the sequence $(u_h)_{h>0}$ is an entropy satisfying measure
valued solution. In other words 
the uniformly bounded $L^\infty$ sequence $(u_h)_{h>0}$, as announced at the beginning of this section, it is easy to
check that the inequation \eqref{cwts} becomes as $h$ tends to 0 the following inequation satisfied in the
weak sense:
\be
 \label{mventropy}
\partial_t \langle \mu, \U(\cz(\cdot,\v))\rangle +
\nabla_x \langle \mu, \Q(\cdot,\v) \rangle - \langle \mu , \partial_v \Q(\cdot,\v)\rangle \!:\! \nabla v
\le 0.
\ee
Relying on a direct extension of DiPerna's uniqueness theorem \cite{BoutinCoquelLeFloch09c}, we can 
deduce that the entropy measure--valued solution $\mu_{t,x}$ reduces to a Dirac measure
$\delta_{u(t,x)}$ concentrated on a function $u=u(t,x)$ since the initial data $\mu_0$ coincides with the
Dirac measure $\delta_{u_0}$ (where $u_0$ is the initial data in the Cauchy problem
(\ref{cauchyuv})). Proving that the inital data $u_0$ is correctly handled amounts to show that for every
compact subset ${\cal K}$ of $\R$ we have
\be
\label{mvinitialdata}
\lim_{t\to 0+}\int_0^t \int_{\cal K} \langle \mu_{s,x} , |id-u_0(x)|\rangle\ dxds = 0.
\ee 
The condition
\eqref{mvinitialdata}-\eqref{mventropy} reduces to a Dirac measure concentrated at $u(t,x)$, the Kruzkov
entropy solution of \eqref{cauchyuv}-\eqref{entropyu} with same initial data $u_0$. In other words, for
all time $T>0$ and for all compact $\cal K$ in $\R$, the scheme converges strongly in
$L^p_{loc}((0,T)\times{\cal K})$ to the solution $u$. Theorem~\ref{maintheo} of this paper is
thus now established.

\begin{proof}[Proof of Proposition \ref{propweakesti}.]
We start from the discrete in time weak formulation \eqref{dwit} stated in Proposition~\ref{propdwit}: 
$$
\left.
\begin{array}{lll}
\displaystyle
&\sumkT \sumek \ake  \Big(\U(\wwkve) - \U(\wkve)\Big) \phi^n_e \volk \\
& - \displaystyle \iint_{]{t^n},{t^{n+1}}[\times \R^d}\Q(\u^n_h,v(x)) \cdot \nabla \phi(x,t) +\phi(x,t)
\partial_v\Q (\u^n_h,v(x)) \!:\!\nabla v(x) dx dt 
\\
& \le ~{\cal O}(h) \tau\ \vert\vert \phi\vert\vert_{W^{1,\infty}(]t^n,t^{n+1}[\times
\R^d)}\vert \supp(\phi)\vert,
\end{array}
\right.
$$
in which we plug the decomposition \eqref{tddecomp}-\eqref{phik}. A discrete test function $\psi_K$ given
for any given time-independent test function $\psi\in\calD(\R^d)$ is considered. We then
get
$$
\begin{aligned}
& \sumkT \Big(\U(\wwk)-\U(\wk)\Big)\psi_K \volk 
\\
& - \displaystyle \iint_{]{t^n},{t^{n+1}}[\times \R^d} \!\!\!\!\Q(\u^n_h,v(x)) \cdot \nabla \psi(x)
+\psi(x) \partial_v\Q (\u^n_h,v(x))\!:\!\nabla v(x) dx dt \\
& \le \displaystyle\sumkT \sumek \ake \Big(\U(\wwk)-\U(\wwkve)\Big) \psi_e \volk 
+ \displaystyle \sumkT\sumek  \ake \Big(\U(\wkve)-\U(\wk))\Big) \psi_e \volk  \\
& \quad +~{\cal O}(h) \tau \vert\vert \psi\vert\vert_{W^{1,\infty}(\R^d)} \vert \supp(\psi)\vert.
\end{aligned}
$$

Invoquing estimates \eqref{estiwkwkve}-\eqref{estiwwkwwkve} then yields
$$
\begin{aligned}
& \sumkT \Big(\U(\wwk)-\U(\wwkve)\Big)\psi_K \volk  + \sigma_\U \sumkT \sumek \ake \vert \wwk - \wwkve\vert^2 \psi_K \volk 
\\
& \le  {\cal O}(h) \tau \vert\vert \psi\vert\vert_{W^{1,\infty}(\R^d)}\vert \supp(\psi)\vert
 +~{\cal O}(h) \sumkT \vert\vert \nabla\psi\vert\vert_{L^\infty(K)} \volk
  +~{\cal O}(h^2) \sumkT \vert\vert \psi\vert\vert_{W^{1,\infty}(K)} \volk
\\
& \quad + \displaystyle \iint_{]{t^n},{t^{n+1}}[\times \R^d} \!\!\!\!\Q(\u^n_h,v(x)) \cdot \nabla \psi(x)
+\psi(x) \partial_v\Q (\u^n_h,v(x))\!:\!\nabla v(x) dx dt.
\end{aligned}
$$
Observe that due to the estimate \eqref{princmax}, the last contribution in the above right--hand side can
be given the following crude estimate ${\cal O}(\tau) \vert\vert \psi\vert \vert_{W^{1,\infty}(\R^d)}$.
Henceforth, we deduce that
$$
\begin{aligned}
&
\sumkT \Big(\U(\wwk)-\U(\wk)\Big)\psi_K \volk + \sigma_\U \sumkT \Big(\sumek \ake \vert \wwk -
\wwkve\vert^2 \Big)\psi_K \volk \\
& \le ~{\cal O}(h)\vert\vert \psi\vert \vert_{W^{1,\infty}(\R^d)}.
\end{aligned}
$$
Summing over time indices $n\in[0,N_T]$ with $N_T=[T/\tau]$ for a fixed time $T>0$, we get
$$
 \begin{aligned}
& \int_{\R^d} \U(\wh(x,T))\psi_h(x) dx + \sigma_\U \sum_{n=0}^{N_T}\sumkT \Big(\sumek \ake \vert \wwk -
\wwkve\vert^2 \Big)\psi_K \volk \\
& \le \int_{\R^d} \U(\w_0(x))\psi_h(x) dx + {\cal O}(1)T\vert\vert \psi\vert \vert_{W^{1,\infty}(\R^d)},
 \end{aligned}
$$
which is the required result.
\end{proof}

\begin{proof}[Proof of Corollary \ref{corentine}]
We start from \eqref{dwit}-\eqref{tddecomp}-\eqref{phik} and consider the following discrete in
time weak formulation for the time dependent test function $\phi\in\calD(\R^+_*\times\R^d)$ and its
discrete representation $\phi_K^n$
$$
\begin{aligned}
& \sumkT \Big(\U(\wwk)-\U(\wk)\Big)\phi^n_K \volk \\
& - \displaystyle \iint_{]{t^n},{t^{n+1}}[\times \R^d} \!\!\!\!\Q(\u^n_h,v(x)) \cdot \nabla \phi(x,t)
+\phi(x,t) \partial_v\Q (\u^n_h,v(x)) \!:\! \nabla v(x) dx dt\\
& \le {\cal O}(h) \tau \vert\vert \phi\vert\vert_{W^{1,\infty}(]t^n,t^{n+1}[\times\R^d)} \vert
\supp(\phi)\vert
+ {\cal O}(h^2) \sumkT \vert\vert \phi\vert\vert_{W^{1,\infty}(]t^n,t^{n+1}[\times K)} \volk\\
& +~{\cal O}(h) \sumkT \sumek \ake \vert\wwk-\wwkve\vert\
\vert\vert\nabla\phi\vert\vert_{L^\infty(]t^n,t^{n+1}[\times K)} \volk.
\end{aligned}
$$
where we have used estimates \eqref{estiwkwkve}-\eqref{estiwwkwwkve}. Summing this inequality over time
indices gives
\be
\label{weakentrop}
 \begin{aligned}
  & -\sum_{n\ge 0} \sumkT \U(\wwk)\dfrac{\phi_K^{n+1}-\phi_K^n}{\tau}\tau\volk\\
& -\iint_{\R^+\times\R^d} \Q(\u^n_h,v(x)) \cdot \nabla \phi(x,t) +\phi(x,t) \partial_v\Q (\u^n_h,v(x))
\!:\! \nabla v(x) dx dt\\
  &\quad \leq {\cal O}(h) \vert\vert \phi \vert\vert_{W^{1,\infty}(\R^+\times\R^d)}\\
&\quad + {\cal O}(1) \sum_{n\ge 0} \sumkT \Big(\sumek \ake \vert\wwk-\wwkve\vert\ \chi_\phi
\vert\vert\nabla\phi\vert\vert_{L^\infty(]t^n,t^{n+1}[\times K)} \volk \tau\Big),
 \end{aligned}
\ee
making use of the characteristic function $\chi_\phi$ of $\bigcup_{0<t<T} \supp(\phi(\cdot,t))$, a compact subset of~$\R^d$, 
where $T$ is a finite time such that $\supp(\phi(\cdot,t))=\emptyset$ for $t\ge T$.
Cauchy-Schwarz's inequality then yields the following crude upper bound for the last term: 
$$
 \begin{aligned}
&\sum_{n\ge 0} \sumkT \Big(\sumek \ake \vert\wwk-\wwkve\vert\
\chi_\phi\Big)\vert\vert\nabla\phi\vert\vert_{L^\infty(]t^n,t^{n+1}[\times K)} \volk \tau\\
  &\leq
\Big(\sum_{n\ge 0} \sumkT \big(\sumek  \ake \vert\wwk-\wwkve\vert\ \chi_\phi\big)^2 \volk\tau\Big)^{1/2}
\Big(\sum_{n\ge 0} \sumkT \vert\vert\nabla\phi\vert\vert^2_{L^\infty(]t^n,t^{n+1}[\times
K)}\volk\tau\Big)^{1/2}\\
&\leq {\cal O}(1) \Big(\sum_{n\ge 0} \sumkT \big(\sumek \ake \vert\wwk-\wwkve\vert^2\big) \chi_\phi
\volk\tau\Big)^{1/2}
 \end{aligned}
$$
as a consequence of the convexity property of the $\ake-$average. The estimate \eqref{weakesti} then
yields with $\psi=\chi_\phi$
$$
 \begin{aligned}
\sum_{n\ge 0} \sumkT \Big(\sumek \ake \vert\wwk-\wwkve\vert\
\chi_\phi\Big)\vert\vert\nabla\phi\vert\vert_{L^\infty(]t^n,t^{n+1}[\times K)} \volk \tau \leq {\cal
O}(h^{1/2}).
 \end{aligned}
$$
Then routine arguments give the conclusion from \eqref{weakentrop}.
\end{proof}


\section{Numerical experiments}

\subsection{A two domain coupling problem}

In this first test, we consider an heterogeneous medium which occupies the spatial domain $[-1,1]^2$ and is 
constituted by an annular inclusion $\calD_1$ centered at the origin $(0,0)$ with external
radius~$\sqrt{0.2}$ and with internal radius~$\sqrt{0.1}$, and by its complement set $\calD_0$. In these
two domains, the following respective flux--functions are considered in term of the scalar unknown $w=w(t,x)$:
\begin{equation*}
f_0(w)={w^2 \over 2} \begin{pmatrix}1\cr 1\end{pmatrix},\quad \qquad
f_1(w)= {(w-0.9)^2 \over 2} \begin{pmatrix}1\cr
1\end{pmatrix}.
\end{equation*}
The regularized color function $v$ plotted in Figure~\ref{fig-multiD-v} provides us with a regularized version of the characteristic function of the domain $\calD_1$.
The coupling condition between $\calD_0$ and $\calD_1$ takes here the form
\begin{equation*}
2 \, w_-(t,x) = w_+(t,x),\quad x\in\partial\calD_1,
\end{equation*}
where $w_{\pm}(t,x)=\lim_{\theta\to 0+} w(t,x\pm\theta \nu_x)$ and $\nu_x$ the exterior unit normal at $x\in\partial\calD_1$.

The initial data plotted in Figure~\ref{fig-multiD-w0} is piecewise constant: 
\begin{equation*}
 w_0(x,y)=
\begin{cases}
1,&x<-0.8,\\
0,&x\geq -0.8.
\end{cases}
\end{equation*}
The computations are performed on a Cartesian grid with~$100\times100$ meshes, and the CFL number is chosen to be~$0.5$.

In an homogeneous domain with the sole flux $f_0$, such an initial data would develop a shock front moving
with the speed vector $0.5(1,1)^T$. In the present heterogeneous domain, this shock front has the
same behavior only until it reaches the interface between both domains (see Figures~\ref{fig-multiD-wA}). The
coupling condition at this interface is such that the value $w=2$ arises then inside the domain
$\calD_+$. In this second domain, where the flux under consideration is $f_1$, we observe then a
(curved) shock wave connecting the states $w=2$ and $w=0$ and moving at the fixed speed given by the
Rankine--Hugoniot relation, that is, $0.605(1,1)^T$ (see Figures~\ref{fig-multiD-wC} and~\ref{fig-multiD-wE}).
Finally, the shock front goes outside the whole domain $[-1,1]^2$ (see Figure~\ref{fig-multiD-wH}). 
In Figures~\ref{fig-multiD-uA}, \ref{fig-multiD-uC}, \ref{fig-multiD-uE}, and \ref{fig-multiD-uH}), we
plot the $u$--variable, which is found to remain constant at each interface, as expected by the theory.

\begin{figure}[!ht]
\centering
{
\hfill%
\subfloat[\label{fig-multiD-w0}Initial data
$w_0$]{\includegraphics[angle=0,width=.45\linewidth,keepaspectratio,clip,trim=150 30 100
120]{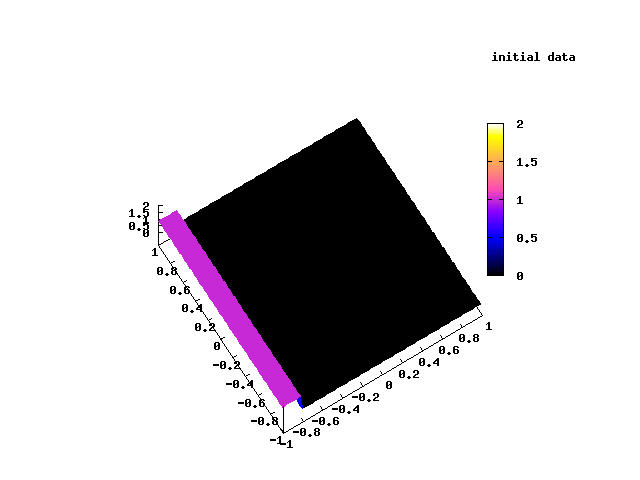}}%
\hfill
\subfloat[\label{fig-multiD-v}Color function
$v$]{\includegraphics[angle=0,width=.45\linewidth,keepaspectratio,clip,trim=150 30 100
120]{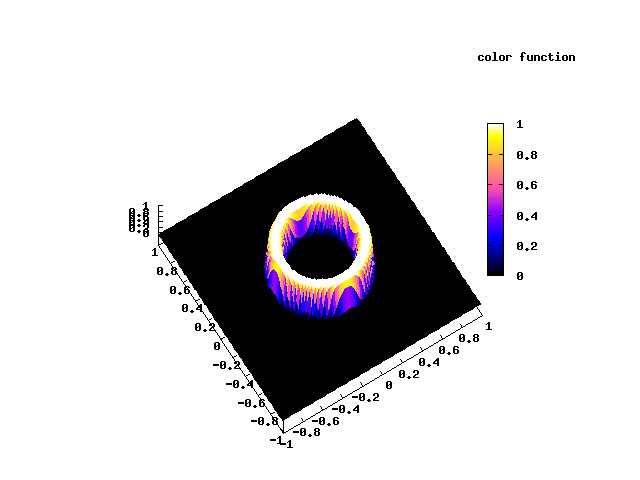}}
\hfill%
}

\caption{Initial data for the multidimensional test.}
\end{figure}

\begin{figure}[!ht]
\centering
{
\hfill%
\subfloat[\label{fig-multiD-wA}Solution w at
$t=0.5$]{\includegraphics[angle=0,width=.38\linewidth,keepaspectratio,clip,trim=150 30 100
120]{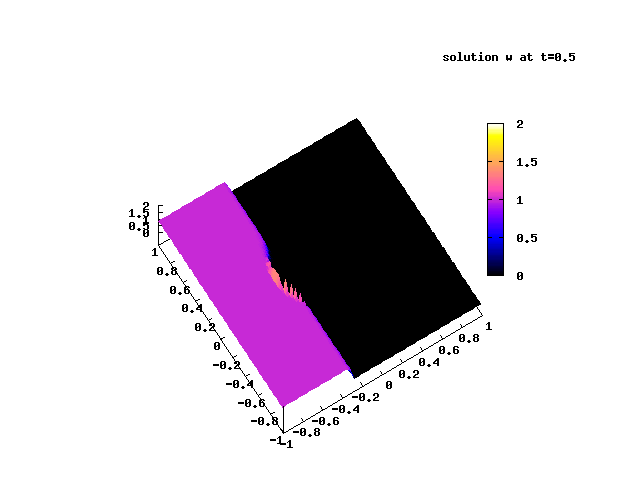}}\hfill%
\subfloat[\label{fig-multiD-uA}Solution u at
$t=0.5$]{\includegraphics[angle=0,width=.38\linewidth,keepaspectratio,clip,trim=150 30 100
120]{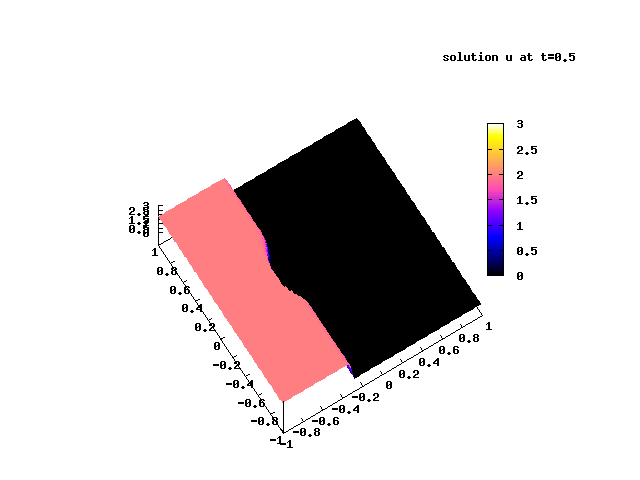}}%
\hfill%
}

{
\hfill%
\subfloat[\label{fig-multiD-wC}Solution w at
$t=1.5$]{\includegraphics[angle=0,width=.38\linewidth,keepaspectratio,clip,trim=150 30 100
120]{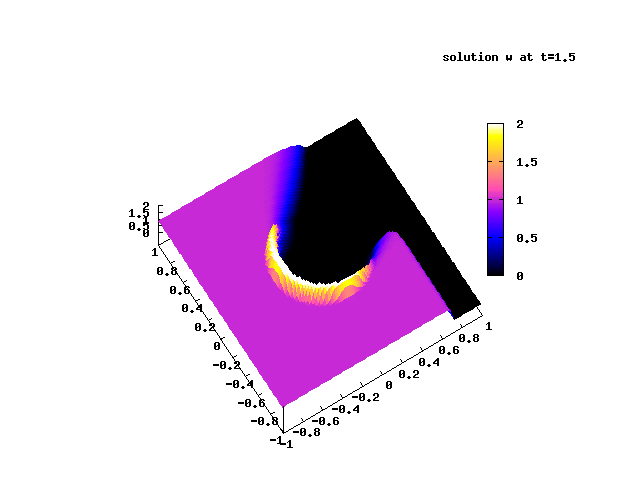}}\hfill%
\subfloat[\label{fig-multiD-uC}Solution u at
$t=1.5$]{\includegraphics[angle=0,width=.38\linewidth,keepaspectratio,clip,trim=150 30 100
120]{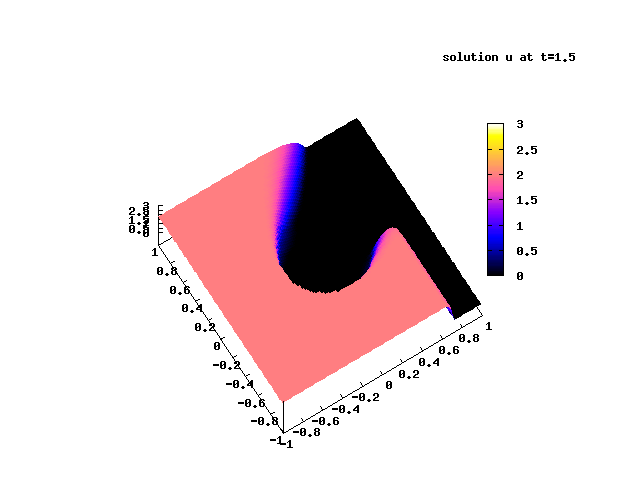}}
\hfill%
}

{
\hfill%
\subfloat[\label{fig-multiD-wE}Solution w at
$t=2.5$]{\includegraphics[angle=0,width=.38\linewidth,keepaspectratio,clip,trim=150 30 100
120]{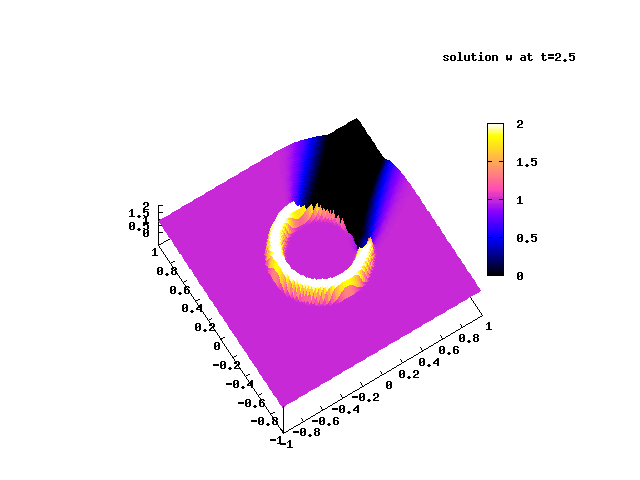}}\hfill%
\subfloat[\label{fig-multiD-uE}Solution u at
$t=2.5$]{\includegraphics[angle=0,width=.38\linewidth,keepaspectratio,clip,trim=150 30 100
120]{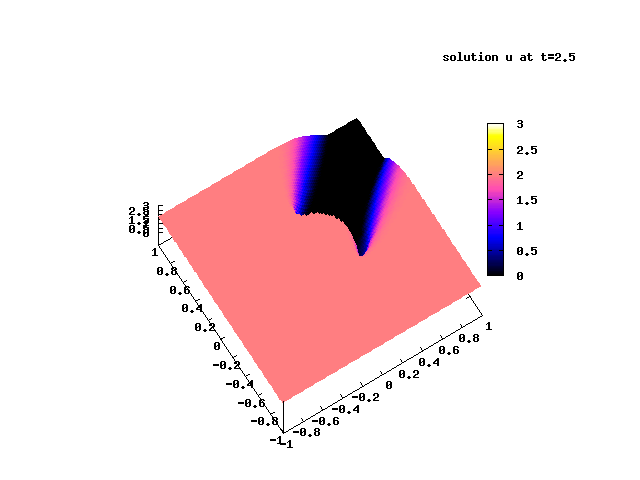}}
\hfill%
}

{
\hfill%
\subfloat[\label{fig-multiD-wH}Solution w at
$t=4.5$]{\includegraphics[angle=0,width=.38\linewidth,keepaspectratio,clip,trim=150 30 100
100]{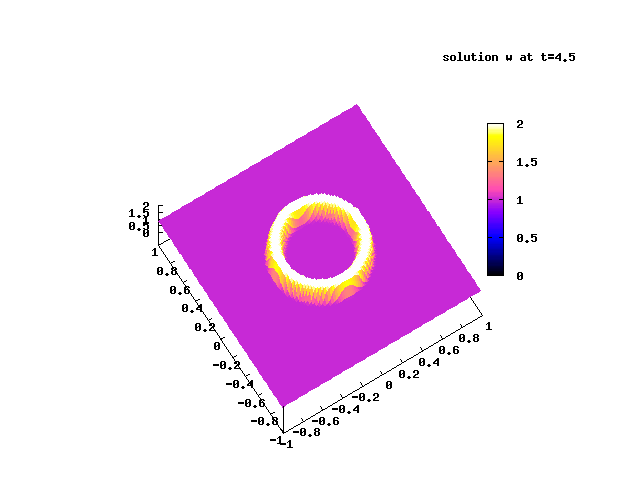}}\hfill%
\subfloat[\label{fig-multiD-uH}Solution u at
$t=4.5$]{\includegraphics[angle=0,width=.38\linewidth,keepaspectratio,clip,trim=150 30 100
100]{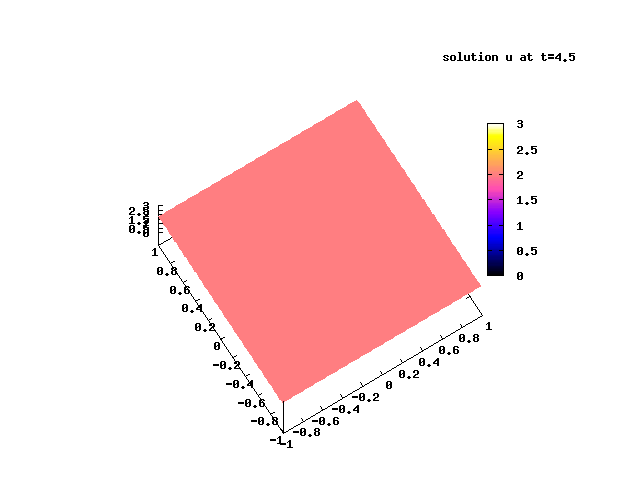}}
\hfill%
}

\caption{Evolution of the solution for different times : $w$ (left) and $u$ (right).}
\end{figure}

\subsection{A three domain coupling problem}

In this second test, we consider three different domains, as represented by the two components of $v$ (see
Figure~\ref{threedomains}). The domain $\calD_2$ is a triangular inclusion and the domain $\calD_1$ is the
complement of $\calD_2$ relative to an annular inclusion. The flux--functions under consideration are now 
\be
  f_0(w)={w^2 \over 2} \begin{pmatrix}1\cr 0\end{pmatrix},\quad
  f_1(w)={w^2 \over 2} \begin{pmatrix}0.5\cr 0\end{pmatrix},\quad
  f_2(w)={w^2\over 2} \begin{pmatrix}0\cr 1\end{pmatrix},
\ee
and the coupling relations are given by the change of unknown \eqref{chgtvarrrr} with 
\be
 \cpl_0(w)=w,\quad \cpl_1(w)=w/2,\quad \cpl_2(w)=w/3.
\ee
We consider the same initial data as previously and, thus, we expect the state $w=2$ to appear in $\calD_1$ and the state
 $w=3$ in $\calD_2$. The results are represented in Figures~\ref{fig-multiD3-t1}
to~\ref{fig-multiD3-t6} for successive time steps. Once again, the limiting solution as the time grows
satisfies the expected coupling relation.

\begin{figure}[!ht]
\subfloat[Domain $\calD_1$.]{\includegraphics[angle=0,width=.45\linewidth,keepaspectratio,clip,trim=200
100 150 110]{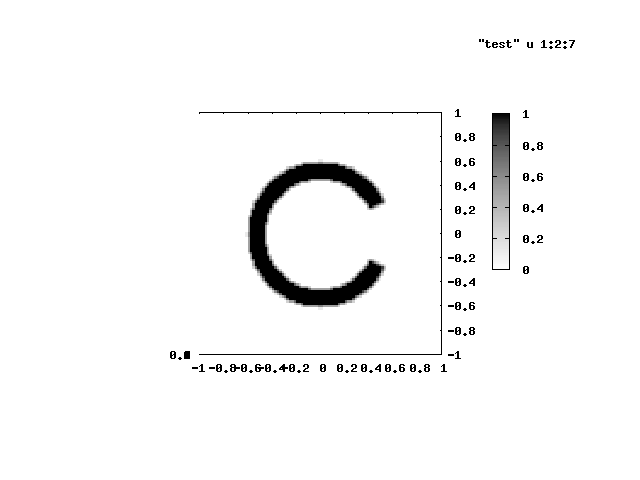}}\hfill%
\subfloat[Domain $\calD_2$.]{\includegraphics[angle=0,width=.45\linewidth,keepaspectratio,clip,trim=200
100 150 110]{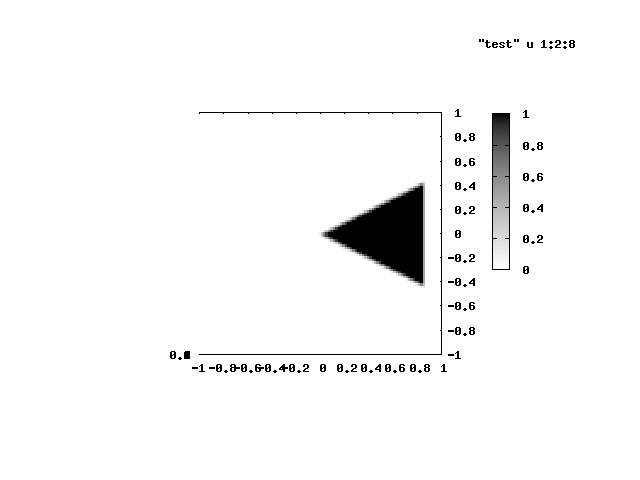}}
 \caption{Geometry of the three domains.}
 \label{threedomains}
\end{figure}

\begin{figure}[!ht]
\centering

{
\hfill%
\subfloat[\label{fig-multiD3-t1}Solution w at
$t=1.0$]{\includegraphics[angle=0,width=.38\linewidth,keepaspectratio,clip,trim=150 30 100
100]{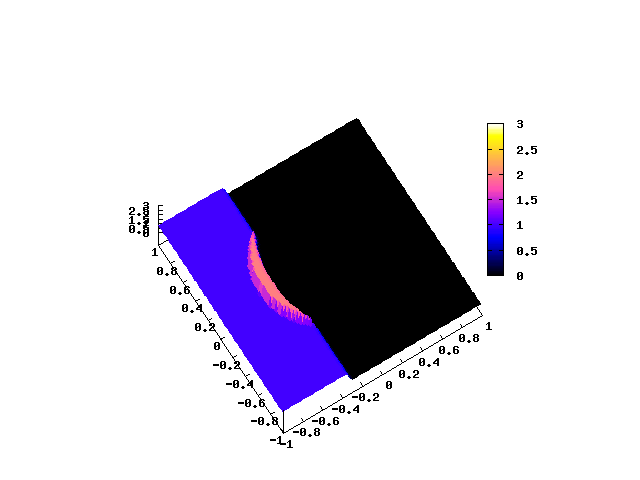}}\hfill%
\subfloat[\label{fig-multiD3-t2}Solution w at
$t=2.0$]{\includegraphics[angle=0,width=.38\linewidth,keepaspectratio,clip,trim=150 30 100
100]{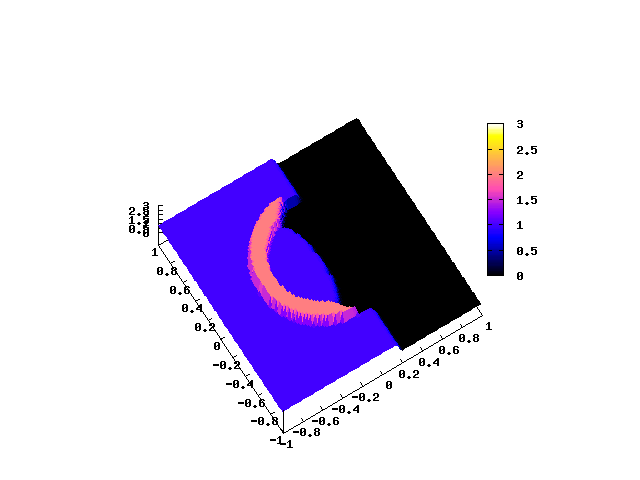}}
\hfill%
}

{
\hfill%
\subfloat[\label{fig-multiD3-t3}Solution w at
$t=3.0$]{\includegraphics[angle=0,width=.38\linewidth,keepaspectratio,clip,trim=150 30 100
100]{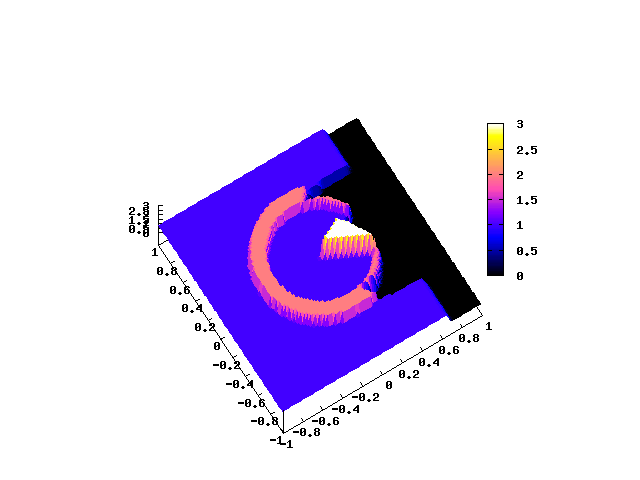}}\hfill%
\subfloat[\label{fig-multiD3-t4}Solution w at
$t=4.0$]{\includegraphics[angle=0,width=.38\linewidth,keepaspectratio,clip,trim=150 30 100
100]{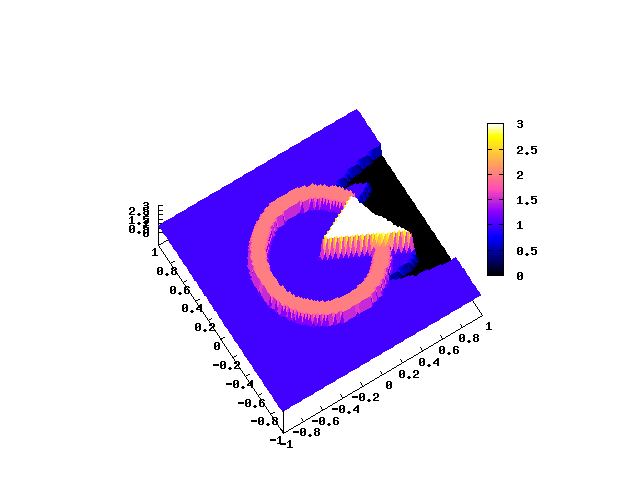}}
\hfill%
}

{
\hfill%
\subfloat[\label{fig-multiD3-t5}Solution w at
$t=5.0$]{\includegraphics[angle=0,width=.38\linewidth,keepaspectratio,clip,trim=150 30 100
100]{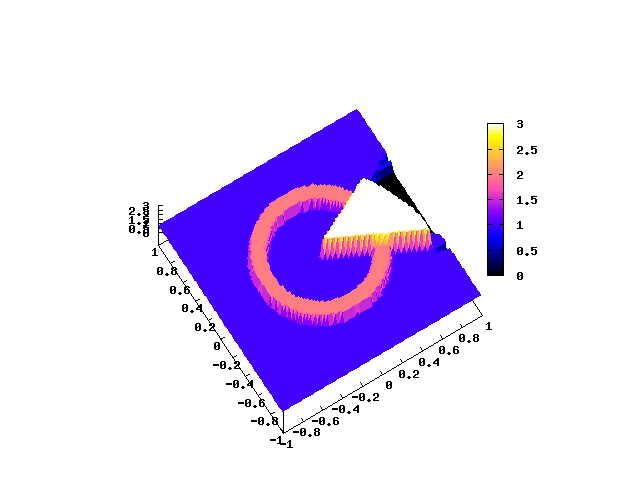}}\hfill%
\subfloat[\label{fig-multiD3-t6}Solution w at
$t=6.0$]{\includegraphics[angle=0,width=.38\linewidth,keepaspectratio,clip,trim=150 30 100
100]{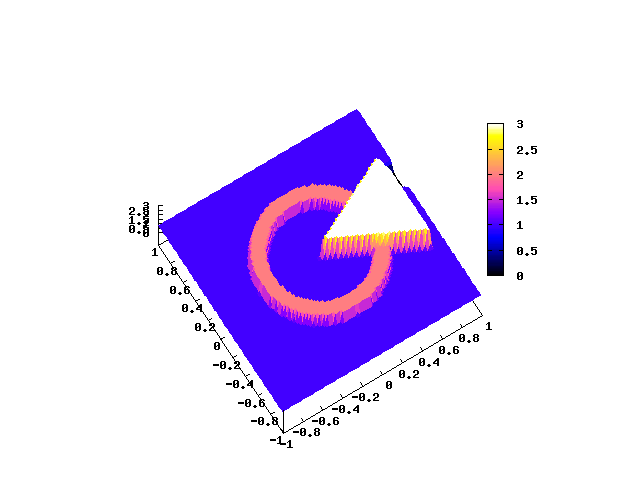}}
\hfill%
}

 \caption{Three domain evolution. Solution $w$.}
\end{figure}


\newcommand{\auth}{\textsc}

\end{document}